\begin{document}
\def\dis{\displaystyle}
\renewcommand{\theequation}{\thesection.\arabic{equation}}
\newtheorem{thm}{Theorem}[section]
\newtheorem{lemma}[thm]{Lemma}
\newtheorem{cor}[thm]{Corollary}
\newtheorem{pro}[thm]{Proposition}
\newtheorem{remark}[thm]{Remark}
\newtheorem{defin}[thm]{Definition}

\title[Suppression of blow-up in 3-D Keller-Segel]{Suppression of blow-up in 3-D Keller-Segel model via Couette flow in whole space}
\author{Shijin Deng}
\address{School of Mathematical Sciences, Shanghai Jiao Tong University, Shanghai 200240, P.R.China.}
\email{matdengs@sjtu.edu.cn}

\author{Binbin Shi}
\address{School of Mathematics and Statistics, Nanjing University of Science and Technology, Nanjing, 210094, P.R. China.}
\email{shibb@njust.edu.cn}

\author{Weike Wang}
\address{School of Mathematical Sciences, CMA-Shanghai and Institute of Natural Science,
Shanghai Jiao Tong University, Shanghai 200240, P.R.China.}
\email{wkwang@sjtu.edu.cn}
\date{}

\maketitle

\noindent{\bf Abstract.} In this paper, we study the 3-D parabolic-parabolic and parabolic-elliptic Keller-Segel models with Couette flow in $\mathbb{R}^3$. We prove that the blow-up phenomenon of solution can be suppressed by enhanced dissipation of large Couette flows. Here we develop Green's function method to describe the enhanced dissipation via a more precise space-time structure and obtain the global existence together with pointwise estimates of the solutions. The result of this paper shows that the enhanced dissipation exists for all frequencies in the case of whole space and it is reason that we obtain global existence for 3-D Keller-Segel models here. It is totally different from the case with the periodic spatial variable $x$ in \cite{bh,h}. This paper provides a new methodology to capture dissipation enhancement and also a surprising result which shows a totally new mechanism.

\vskip .05in


\noindent {\bf Key Words.}\quad 3-D Keller-Segel model, whole space, Couette flow,  Green's function method,   enhanced dissipation, suppression of blow up

\vskip .05in

\noindent {\bf MSC(2020).}\quad 35B40, 35B45

\section{Introduction}

In this paper, we are interested with the 3-D Keller-Segel model {{in}} the background of a large Couette flow:
\begin{equation}
\label{pro}
\begin{cases}
\partial_t n+ A y \partial_x n-\Delta n+\nabla\left(n\nabla c\right)=0,\\
\epsilon \Big(\partial_t c+Ay\partial_x c\Big)-\Delta c=n-c, \\
n(x,y,z,0)=n_0(x,y,z), \ \ (x,y,z)\in \mathbb{R}^3,\\
c(x,y,z,0)=c_0(x,y,z), \ \ (x,y,z)\in \mathbb{R}^3, \text{ for }\epsilon=1.
\end{cases}
\end{equation}
Here, the unknown functions $n(x,y,z,t)$ and $c(x,y,z,t)$ denote the micro-organism density and the chemo-attractant density respectively, the parameter $\epsilon\in\{0,1\}$ and $A$ is a large positive constant.

\vskip .05in

When $A\equiv 0$ and the parameter $\epsilon=0, 1$, it goes back to the classical parabolic-elliptic Keller-Segel model and parabolic-parabolic Keller-Segel model respectively. It is well-known that the solutions for Keller-Segel model in multi-D may blow up in finite time if the initial function {{$n_0$}} is large in $L^1$ norm. More precisely, for initial-boundary value problem in 2-D with a bounded domain, if the $L^1$ norm of the initial function {{$n_0$}} is less than $8\pi$, there exists a unique global solution; and if the $L^1$ norm of the initial function {{$n_0$}} exceeds $8\pi$, the solution may blow up in finite time. One could refer to \cite{hv,hv2,n,ss} for more details. In 3-D and higher dimensional cases, the solution blows up even for initial functions $n_0$ arbitrary small in $L^1$ norm, see \cite{sw,w}. For Cauchy problem, similar results could be found in \cite{kx} for periodic domain case and \cite{cc,s} for for whole space case.

\vskip .05in

The case $A\neq 0$ is corresponding to chemotactic processes in the background of a Couette flow. In fact, it is a more realistic scenario that chemotactic processes take place in a moving fluid, and the possible effects and related problems resulting from the interactions between the chemotactic process and the fluid transport have been widely investigated (see \cite{ccdl,ll,lo,w0} and the references therein).

\vskip .05in

The study of Keller-Segel model with an incompressible flow is also one of those attempts. An interesting question arising is whether one can suppress the possible finite time blow-up by the mixing effect coming from the fluid transport. For the parabolic-elliptic case, Kiselev and Xu \cite{kx} considered the suppression from the relaxation enhancing flow introduced in \cite{ckrz}, and proved that the solution of the advective Keller-Segel equation does not blow up in finite time provided that the amplitude of the relaxation enhancing flow is large enough; Bedrossian and He \cite{bh} found that shear flows have a different suppression effect in the sense that sufficiently large shear flows could prevent the blow-up in 2-D but could not guarantee the global existence in 3-D if the initial mass is greater than $8\pi$. For the parabolic-parabolic case, He \cite{h} proved that shear flows can also suppress the blow-up in 2-D under a certain smallness assumption on the derivative of initial function {{$c_0$}}. For related models, Zeng, Zhang and Zi \cite{zzz} considered the 2-D Keller-Segel-Navier-Stokes equations near the Couette flow and proved the global existence of solutions for both the parabolic-elliptic case and the parabolic-parabolic case if the amplitude of the Couette flow is large enough; Other related research can refer to \cite{fsw,h1}. The additional flows studied in those references are found to provide an enhanced dissipation effect from fluid, which could help the dissipation terms dominate even in the nonlinear level.

\vskip .05in

However, in most of the previous work, the $x$-variable is required to be in a periodic domain which ensures that the corresponding {{frequencies}} are discrete. In such a case, the situation is quite clear since the non-zero mode has a spectrum gap which helps us to capture the dissipation enhancement and for the zero mode there is completely no enhanced dissipation effect. If this requirement is removed and $x\in \mathbb{R}$ is considered, the enhanced dissipation effect in such a situation is unclear since the corresponding {{spectra}} are continuous and one could not separate zero and non-zero modes as before. Recently, Zelati and Gallay \cite{zg} considered the heat equation with a shear flow and $(x,y)\in\mathbb{R}\times [0,L]$ and proved that there is the enhanced dissipation effect and the Taylor dispersion for high and low frequency parts respectively.

\vskip .05in

In this paper, we consider the Keller-Segel model in the background of a Couette flow with spatial variables $(x,y,z)\in\mathbb{R}^3$. We obtain a detailed pointwise description of the solution by Green's function method to overcome the difficulty caused by $x\in\mathbb{R}$. It is interesting that we prove even in 3-D the blow-up of the solution for Pltlak-Keller-Segel model could be suppressed by a Couette flow with a large enough amplitude in the whole space case. It is totally different from the facts for periodic case studied in \cite{bh,h}: in 3-D periodic case, the effects from shear flows including Couette flow are not enough to suppress the blow-up of the solution. It seems that the continuity of the {{spectra}} also brings "certain" dissipation enhancement effects for the zero frequency part. In this paper, we put both parabolic-parabolic case and parabolic-elliptic case in the framework of Green's function method and our method and results also hold for the general case with spatial variables in $\mathbb{R}^n (n\ge 2)$; For simplicity and to show the intrinsic difference between the whole space case and the periodic case, we just consider the $\mathbb{R}^3$ case here.

\vskip .05in

The main results of this paper are as follows:
\begin{thm}\label{main}
(Parabolic-Parabolic case.) Suppose $\epsilon=1$ in \eqref{pro} and the initial functions $n_0(x,y,z), c_0(x,y,z)$ satisfy
\begin{equation}
\label{ini}
\left|n_0(x,y,z)\right|\le C_0e^{-(|x|+|y|+|z|)/C_*},
\end{equation}
and
\begin{equation}
\label{inic}
\left|c_0(x,y,z)\right|\le C_0e^{-(|x|+|y|+|z|)/C_*},\ \ \ \ \ \ \ \ \left|\nabla c_0(x,y,z)\right|\le C_0^*e^{-(|x|+|y|+|z|)/C_*},
\end{equation}
for constants $C_0, C_*>1$ and $0<C_0^*\ll 1$. Then there exists a positive constant $C_0\ll A< C_0/C_0^*$ such that the solution $(n(x,y,z,t),c(x,y,z,t))$ of \eqref{pro} exists globally in time and satisfies
\begin{equation}\nonumber
\left|n(x,y,z,t)\right|
\le CC_0 \mathscr{A}(t;A,\theta,\gamma)\mathscr{W}(x,y,z,t;C'_1,C''_1,C''_1;A),
\end{equation}
\begin{equation}\label{ccd}
\left|c(x,y,z,t)\right|\\
\le CC_0 A^{1/2-\gamma/2}\chi(t) \mathscr{A}(t;A,\theta,\gamma) \mathscr{W}(x,y,z,t;C'_1,C''_1,C''_1;A).
\end{equation}
Here, {{the time decay structure function $\mathscr{A}(t;A,\theta,\gamma)$ and the wave pattern function $ \mathscr{W}(x,y,z,t;C'_1,C''_1,C''_1;A)$}} are defined as follows
\begin{equation}\label{atime}
\mathscr{A}(t;A,\theta,\gamma)\equiv \begin{cases}
1,\ \ \ \ &\text{ for } \ 0< t\le A^{-\theta},\\
A^{-(1-\theta)\gamma}t^{-1/4-\gamma/4}\left(1+A^2t^2\right)^{-1/4+\gamma/4},\ \ \ \ &\text{ for } \ A^{-\theta}< t\le 1,\\
A^{-(1-\theta)\gamma}(1+t)^{-3/2}\left(1+A^2t^2\right)^{-1/2+\gamma/2},\ \ \ \ &\text{ for }\ t>1,
\end{cases}
\end{equation}
\begin{multline}\nonumber
\begin{aligned}
\mathscr{W}(x,y,z,t;C'_1,C''_1,C''_1;A)\equiv&\left(\exp{\left(\frac{-\left(x-\frac{At}2 y\right)^2}{C'_1t\left(1+A^2 t^2\right)}\right)}+\exp{\left(-\frac{\left|x-\frac{A t}2 y\right|}{C'_1(1+At)}\right)}\right)\\
&\cdot\left(\exp{\left(-\frac{y^2}{C''_1t}\right)}+\exp{\left(-\frac{|y|}{C''_1}\right)}\right)
\cdot\left(\exp{\left(-\frac{z^2}{C''_1t}\right)}+\exp{\left(-\frac{|z|}{C''_1}\right)}\right)
\end{aligned}
\end{multline}
and $\chi(t)$ is a cut-off function:
$$\chi(t)\equiv \begin{cases}
0,\ \ \ \ \text{ for }0< t\le 1\\
1,\ \ \ \ \text{ for }t>1.
\end{cases}$$
The positive constants $C, C'_1, C''_1$ are independent of $C_0$ and $A$, the parameters $\theta, \gamma$ satisfy $\theta\in(2/3,1)$ and $\gamma\in(1/3,1/2]$ respectively.
\end{thm}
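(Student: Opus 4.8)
The plan is to reformulate \eqref{pro} with $\epsilon=1$ as an integral equation via Duhamel's principle and to run a contraction/continuity argument in a weighted space whose norm is dictated exactly by the claimed bounds. First I would perform the shear change of variables $X = x - \tfrac{At}{2}y$ (or equivalently work on the Fourier side in $x$, freezing the frequency $k$), which turns the transport operator $Ay\partial_x$ into a time-dependent coefficient and allows one to write down the Green's function $G_A(x,y,z,t;\cdot)$ for the linear operator $\partial_t + Ay\partial_x - \Delta$. The heart of the linear analysis — which I expect is carried out in the sections between this statement and its proof — is the pointwise estimate showing $G_A$ is dominated by $\mathscr{A}(t;A,\theta,\gamma)\,\mathscr{W}(x,y,z,t;\dots;A)$ in the $n$-component, and by $A^{1/2-\gamma/2}\chi(t)$ times the same in the $c$-component; the three regimes in \eqref{atime} correspond to the short-time window $t\le A^{-\theta}$ (before enhanced dissipation kicks in, decay is trivial), the intermediate window $A^{-\theta}<t\le 1$ (enhanced dissipation active, governed by the $(1+A^2t^2)$ factor coming from the shear-induced stretching of $x$-frequencies), and $t>1$ (ordinary heat decay $(1+t)^{-3/2}$ superposed on the enhanced factor). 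The $c$-equation, being parabolic with the lower-order term $n-c$, is handled by the analogous Green's function for $\partial_t + Ay\partial_x - \Delta + 1$, and the extra $A^{1/2-\gamma/2}$ gain reflects one integration in time against that kernel.

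The key steps, in order, are: (i) write $n(t) = G_A^{(n)}(t)\ast n_0 - \int_0^t G_A^{(n)}(t-s)\ast \nabla\cdot(n\nabla c)(s)\,ds$ and $c(t) = G_A^{(c)}(t)\ast c_0 + \int_0^t G_A^{(c)}(t-s)\ast n(s)\,ds$, using the initial decay \eqref{ini}–\eqref{inic} to control the linear terms by $C_0\mathscr{A}\mathscr{W}$ and $C_0^*(\cdots)$ respectively; (ii) define the solution space $\mathcal{X}$ as the set of $(n,c)$ satisfying the right-hand bounds of the theorem with a free multiplicative constant $M$, and verify that the linear-data contributions land in $\mathcal{X}$ with room to spare provided $A\gg C_0$; (iii) estimate the Duhamel nonlinearity: the crucial convolution estimate is of the form $\int_0^t \big\| \nabla G_A^{(n)}(t-s) \big\|_{\text{against}\ \mathscr{W}} \cdot \|n\nabla c\|_{\mathscr{W}^2}(s)\,ds \lesssim M^2 C_0 \mathscr{A}(t)\mathscr{W}(t)$, where the gradient on the kernel costs $(t-s)^{-1/2}$ near the diagonal and $\nabla c$ supplies the $A^{1/2-\gamma/2}\chi$ factor — this is where the constraint $A<C_0/C_0^*$ and the smallness $C_0^*\ll1$ are consumed, so that the product of the two amplitude functions still closes the estimate with the same $\mathscr{A}$; (iv) a symmetric but easier estimate for the $c$-Duhamel term using $\|n\|_{\mathscr{W}}\lesssim MC_0\mathscr{A}$; (v) conclude by Banach fixed point (or by a priori estimate plus local existence and continuation) that a global solution exists in $\mathcal{X}$, with the stated pointwise bounds.

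The main obstacle — and the step deserving the most care — is the nonlinear convolution estimate in step (iii), specifically showing that the quadratic term reproduces the \emph{same} time-decay profile $\mathscr{A}(t;A,\theta,\gamma)$ rather than a worse one. One must track how the three-regime structure of $\mathscr{A}$ behaves under the convolution $\int_0^t \mathscr{A}(t-s)\,\mathscr{A}(s)\cdot(\text{kernel factors})\,ds$, and in particular verify that the singular factor $(t-s)^{-1/2}$ from $\nabla G_A^{(n)}$ is integrable against these amplitudes uniformly in $A$, and that the loss $A^{-(1-\theta)\gamma}$ in \eqref{atime} is exactly compensated by the gain $A^{1/2-\gamma/2}$ carried by $\nabla c$ together with the constraint $A C_0^* < C_0$; this forces the admissible ranges $\theta\in(2/3,1)$ and $\gamma\in(1/3,1/2]$. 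The secondary difficulty is the bookkeeping for the wave pattern $\mathscr{W}$: one needs a "convolution-stability" lemma asserting that $\mathscr{W}(t-s)\ast\mathscr{W}(s)\lesssim \mathscr{W}(t)$ up to adjusting the constants $C_1',C_1''$, which is routine for Gaussian-plus-exponential profiles in the $(y,z)$ directions but requires genuine care in the sheared $x$-variable because the center $\tfrac{At}{2}y$ moves with $y$; handling the cross terms between the Gaussian and exponential pieces of $\mathscr{W}$ in that variable is the most technical part of the argument, and I would isolate it as a preparatory lemma before touching the fixed-point scheme.
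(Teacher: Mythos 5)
Your overall skeleton --- the explicit Green's function for $\partial_t+Ay\partial_x-\Delta$, Duhamel representations for $n$ and $c$, a weighted pointwise ansatz of the form $\mathscr{A}\,\mathscr{W}$, wave-pattern interaction lemmas, and closure via local existence plus an $L^\infty$ continuation criterion --- is essentially the paper's route (the paper simply quotes the Marcus--Press formula rather than deriving the kernel by a shear change of variables, and your ``convolution-stability'' lemma for $\mathscr{W}$ with the moving center $\frac{At}{2}y$ is exactly the content of the appendix Lemmas \ref{lemmaz}--\ref{lemma0}). However, the mechanism you describe for closing the estimates contains genuine errors. The three-regime amplitude $\mathscr{A}(t;A,\theta,\gamma)$ is not a property of the linear propagator: the linear estimate (Lemma \ref{lemma1}) is the uniform bound $t^{-1/2+\beta}(1+t)^{-1-\beta}\left(1+A^2t^2\right)^{-1/2+\beta}$, and the threshold $A^{-\theta}$ with $\theta\in(2/3,1)$ together with the factor $A^{-(1-\theta)\gamma}$ enter only through the nonlinear ansatz \eqref{ansatz}, because when the derivative is thrown onto the kernel the singularity in $t-\sigma$ always arrives glued to the enhanced factor $\left(1+A^2(t-\sigma)^2\right)^{-1/2}$, and these two effects balance at times of order $A^{-2/3}$; that balance, worked out in Lemmas \ref{lemma2}--\ref{lemma4}, is what generates the regimes, not the kernel itself.

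Second, your accounting of the constants is inverted. The factor $A^{1/2-\gamma/2}\chi(t)$ on $c$ (and on $\nabla c$, Lemma \ref{cc1}) is a \emph{loss}, not a gain: for $t>1$ the chemoattractant is larger than $n$ by $A^{1/2-\gamma/2}$, and absorbing this loss in the quadratic term $n\nabla c$ is precisely what forces $\gamma>1/3$ (cf.\ \eqref{333}), while $\theta>2/3$ comes from the balance above. The smallness $C_0^*\ll1$ and the constraint $C_0^*A<C_0$ are not consumed in the nonlinear convolution at all; they are used only in the linear initial-data contribution to $\nabla c$, where differentiating $\mathbb{G}_{c,1}$ against $c_0$ costs a factor $(1+At)$ that must be paid by $|\nabla c_0|\le C_0^*$ (see \eqref{cin2}--\eqref{cin}). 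The nonlinear estimates close with \emph{large} data: $n\nabla c$ is of size $C_0^2A^{1/2-\gamma/2}$ at worst, and what saves the bootstrap is that every Duhamel contribution carries a negative power of $A$ (factors such as $A^{-\theta/2}$, $A^{-\epsilon_0}$, $A^{1-2\theta}$, $A^{1/2-3\gamma/2-2(1-\theta)\gamma}$), so taking $A\gg C_0$ beats the constants. A contraction argument whose smallness is drawn from $C_0^*$, as you propose in step (iii), would not close; you need the large-$A$ gain, which requires tracking exactly which power of $A$ each of the three time regions produces, region by region, as in \eqref{1}--\eqref{3}.
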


In Theorem \ref{main}, for the Parabolic-Parabolic case, the initial data $n_0$ and $c_0$ could be large, while the derivative of initial function $c_0$ is required to be small enough:
$$|\nabla c_0(x,y,z)|<C_0/A\ll 1.$$
The decay structures of unknown functions $n(x,y,z,t)$ and $c(x,y,z,t)$ are also a little bit different: for long time scale, there is an extra large factor $A^{1/2-\gamma/2}\in [0,A^{1/3})$ in \eqref{ccd} for $c(x,y,z,t)$. For the Parabolic-Elliptic case, there are no such differences between $n(x,y,z,t)$ and $c(x,y,z,t)$:

\begin{thm}\label{main0}
(Parabolic-Elliptic case.) Suppose $\epsilon=0$ in \eqref{pro} and the initial function $n_0(x,y)$ satisfies \eqref{ini} for constants $C_0, C_*>1$. Then there exists a positive constant $A\gg C_0$ such that the solution of \eqref{pro} exists globally and satisfies
\begin{equation}\nonumber
\left|n(x,y,z,t)\right|, \left|c(x,y,z,t)\right|\\
\le CC_0 \mathscr{W}(x,y,z,t;C'_1,C''_1,C''_1;A)\mathscr{A}(t;A,\theta,\gamma)
\end{equation}
for positive constants $C, C'_1, C''_1$ independent of $C_0$ and $A$, $\theta\in(2/3,1), \gamma\in(0,1/2]$. Here, $\mathscr{A}(t;A,\theta,\gamma)$ and $ \mathscr{W}(x,y,z,t;C'_1,C''_1,C''_1;A)$ defined by \eqref{atime} and {{\eqref{wavest}}} respectively.
\end{thm}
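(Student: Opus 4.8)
To prove Theorem~\ref{main0} the plan is to collapse \eqref{pro} with $\epsilon=0$ to a single nonlocal convection--diffusion equation for $n$, to derive a pointwise Green's function estimate for the Couette--heat operator that already encodes the enhanced dissipation in the weights $\mathscr{A}$ and $\mathscr{W}$, and then to close the nonlinearity by a continuity argument in the class $|n(\cdot,t)|\le C_0\,\mathscr{A}(t)\,\mathscr{W}(\cdot,t)$. The parabolic--elliptic case is lighter than the parabolic--parabolic one of Theorem~\ref{main} because $c$ is slaved to $n$: from the second line of \eqref{pro} one has $c=(1-\Delta)^{-1}n$, hence $\nabla c=\mathcal{B}*n$ with $\mathcal{B}=\nabla(1-\Delta)^{-1}$ the gradient of the Bessel kernel, which is integrable and exponentially localized on $\mathbb{R}^3$. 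Writing $\mathbb{L}\equiv\partial_t+Ay\,\partial_x-\Delta$ and $G(t)$ for its solution operator, $n$ satisfies $\partial_t n+Ay\,\partial_x n-\Delta n=-\nabla\cdot(n\,(\mathcal{B}*n))$, and Duhamel's principle gives
\[
n(t)=G(t)n_0-\int_0^t\nabla\cdot G(t-s)\bigl(n\,(\mathcal{B}*n)\bigr)(s)\,ds ,
\]
with the divergence transferred onto the kernel of $G(t-s)$.

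Next I would establish the \emph{linear Green's function estimate}. A Fourier transform in $(x,z)$ turns $\mathbb{L}$, frequency by frequency, into a one--dimensional Gaussian evolution in $y$, so the fundamental solution $\Gamma(\cdot,t)$ of $G(t)$ is an explicit Gaussian of Kolmogorov type: its center travels with the shear ($x\sim\tfrac{At}{2}y$), its spread in the sheared variable $x-\tfrac{At}{2}y$ is $\sqrt{t(1+A^2t^2)}=\sqrt{t+A^2t^3}$ --- the heat width $\sqrt t$ for $At\ll1$, the shear--enhanced width $\sqrt{A^2t^3}$ for $At\gg1$ --- while its spread in $y$ and in $z$ is the heat width $\sqrt t$. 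This yields $|\Gamma(x,y,z,t)|\lesssim (t(1+A^2t^2))^{-1/2}t^{-1}\,\mathscr{W}(x,y,z,t;C,C,C;A)$, together with $|\partial_x\Gamma|\lesssim (t(1+A^2t^2))^{-1/2}|\Gamma|$ and $|\partial_y\Gamma|+|\partial_z\Gamma|\lesssim t^{-1/2}|\Gamma|$; the $x$--derivative gain $A^{-1}t^{-3/2}$ (valid once $At\gg1$) is the analytic signature of the dissipation enhancement, and it is present for \emph{every} frequency because the shear ties the continuous $x$--spectrum to the $y$--diffusion. Since $\mathscr{W}(\cdot,0)$ dominates $e^{-(|x|+|y|+|z|)/C_*}$ and the ``Gaussian plus exponential tail'' form of $\mathscr{W}$ is stable under convolution with itself, propagating $n_0$ gives $|G(t)n_0|\le CC_0\,\mathscr{A}(t)\,\mathscr{W}(t)$; the envelope $\mathscr{A}$ in \eqref{atime} is a (non-sharp) majorant of this $L^\infty$ decay --- equal to $1$ before the mixing time $A^{-\theta}$, exhibiting on $A^{-\theta}<t\le1$ a genuine decay absent for the bare heat flow (the spreading at scale $\sqrt{t+A^2t^3}$ felt by all frequencies), and showing algebraic decay in both $t$ and $A$ for $t>1$. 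Convolving once more with the integrable Bessel kernels $(1-\Delta)^{-1}$ and $\mathcal{B}$ transfers these bounds to $c$ and to $\mathcal{B}*n$.

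Then I would run the \emph{nonlinear bootstrap}. Put
\[
M\equiv\sup_{t>0,\ (x,y,z)}\ \frac{|n(x,y,z,t)|}{C_0\,\mathscr{A}(t)\,\mathscr{W}(x,y,z,t)} ;
\]
local well-posedness in this weighted class makes $M$ finite on a maximal interval, and I assume for the continuity argument that $M\le2C$, with $C$ the constant above. By the linear estimate $|\mathcal{B}*n|(s)\lesssim C_0M\,\mathscr{A}(s)\mathscr{W}(s)$, and because the product of two $\mathscr{W}$'s at the same time is $\lesssim\mathscr{W}$ with enlarged constants, $|n\,(\mathcal{B}*n)|(s)\lesssim C_0^2M^2\,\mathscr{A}(s)^2\,\mathscr{W}(s)$. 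Plugging this into the Duhamel formula and using the derivative bounds for $\Gamma$ reduces everything to the single inequality
\[
\int_0^t|\nabla\Gamma(t-s)|*\bigl(\mathscr{A}(s)^2\,\mathscr{W}(s)\bigr)\,ds\ \le\ A^{-\delta}\,\mathscr{A}(t)\,\mathscr{W}(t)
\]
for some $\delta=\delta(\theta,\gamma)>0$. I would split $[0,t]$ at $A^{-\theta}$: on $(0,A^{-\theta}]$, where $\mathscr{A}\equiv1$, the short length of the interval against the integrable derivative singularity $(t-s)^{-1/2}$ produces a factor $A^{-\theta/2}$; on $(A^{-\theta},t]$ the factor $\mathscr{A}(s)^2$ carries $A^{-2(1-\theta)\gamma}$, which together with the $A^{-1}(t-s)^{-3/2}$ gain of $\partial_x\Gamma$ when $A(t-s)\gg1$ and the three--dimensional dispersion reproduces $\mathscr{A}(t)\mathscr{W}(t)$ with a surplus negative power of $A$. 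Multiplying by $C_0^2M^2$ and taking $A\gg C_0$ so that $C_0^2(2C)^2A^{-\delta}\le\tfrac12 C_0$, the nonlinear term is $\le\tfrac12 C_0\,\mathscr{A}(t)\mathscr{W}(t)$; hence $M\le C+\tfrac12<2C$, which closes the bootstrap, and a standard continuity argument yields global existence with the stated bounds on $n$ and $c$.

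The hard part will be precisely the family of Green's-function convolution estimates behind the linear bound and the displayed inequality above: showing, purely in physical space --- with no zero/nonzero mode decomposition available since the $x$--spectrum is continuous --- that $\mathscr{W}(t-s)*\mathscr{W}(s)$ integrated against $\mathscr{A}(s)^2$ returns to $\mathscr{A}(t)\mathscr{W}(t)$ with a negative power of $A$ to spare, \emph{uniformly} across the regimes $0<t\le A^{-\theta}$, $A^{-\theta}<t\le1$ and $t>1$. It is the balancing of the competing powers of $t$ and $A$ at the joints $t=A^{-\theta}$ and $t=1$, and the consistency of the envelope $\mathscr{A}$ there, that pins down the ranges $\theta\in(2/3,1)$ and $\gamma\in(0,1/2]$; the reduction of $c$ to $n$, the local theory, and the upgrade to a global solution are by comparison routine.
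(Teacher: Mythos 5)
Your proposal follows essentially the same route as the paper: $c$ is slaved to $n$ through the Yukawa/Bessel kernel (the paper's \eqref{c0}, \eqref{yukawa} and Lemma \ref{cp2}), the explicit Kolmogorov-type Green's function for $\partial_t+Ay\partial_x-\Delta$ with the enhanced-dissipation factor $\left(1+A^2t^2\right)^{-1/2}$ plays the same role as \eqref{greenp}--\eqref{greenpz} and Lemma \ref{lemma1}, and the bootstrap in the weighted class $C_0\,\mathscr{A}(t)\mathscr{W}(x,y,z,t)$ with time split at $A^{-\theta}$ and $1$ and $A\gg C_0$ is exactly the paper's ansatz \eqref{ansatz} closed via Lemmas \ref{lemma2}--\ref{lemma4} and the regularity criterion of Proposition \ref{local}. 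The convolution estimates you defer as "the hard part" are precisely the paper's wave-interaction lemmas (and the appendix Lemmas \ref{lemmaz}--\ref{lemma0}); the only bookkeeping point to watch is that the paper closes the Gaussian-width constants by exploiting that the product $n\nabla c$ \emph{shrinks} the width (to $\tfrac35 C_1'$) before the interaction lemmas enlarge it by $\tfrac32$, rather than bounding $\mathscr{W}\cdot\mathscr{W}$ by a single $\mathscr{W}$ with the same constants.
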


\begin{remark} The main difference between the Parabolic-Parabolic case and the Parabolic-Elliptic case comes from the initial assumption \eqref{inic} in which the derivatives of initial function $c(x,y,z,0)$ are required to be small enough. This requirement should be an intrinsic one. Compared with $n(x,y,z,t)$, the derivative $\nabla c$ is involved in the nonlinear term which brings a destabilizing effect of the strong shear flow. Similar smallness assumption is also posed in \cite{h,zzz}.
\end{remark}

\begin{remark} For both cases, it is quite obvious that from the pointwise estimate of $n(x,y,z,t)$, the decay rate in $L^p(p\ge 2)$ norm is as follows:
$$\left\|n(\cdot,\cdot,\cdot,t)\right\|_{L^p_{x,y,z}}\le CC_0\cdot \begin{cases}
A^{(1-\theta)/p},\ \ \ \ &\text{ for }\ 0< t\le A^{-\theta},\\
A^{-(1-\theta)\gamma}t^{-1/4-\gamma/4}\left(1+A^2t^2\right)^{-1/4+\gamma/4+1/2p},\ \ \ \ &\text{ for }\ A^{-\theta}< t\le 1,\\
A^{-(1-\theta)\gamma}(1+t)^{-3/2(1-1/p)}\left(1+A^2t^2\right)^{-1/2+\gamma/2+1/2p},\ \ \ \ &\text{ for }\ t>1.
\end{cases}$$
Since $-1/2+\gamma/2+1/2p\le 0$ ensured by $\gamma\in(0,1/2]$, the long-time behavior of the solution (for $t>1$) has an enhanced dissipation factor $\left(1+A^2t^2\right)^{-1/2+\gamma/2+1/2p}$ in $L^p(p\ge 2)$ norm and also gains a small constant factor $A^{-(1-\theta)\gamma}$ since $A\gg 1$. However, when $t\le 1$, there may be an increasing process for the $L^p$ norm.
\end{remark}

In Theorem \ref{main}-\ref{main0}, we show that the enhanced dissipation effect of a Couette flow can suppress the blow up of solutions for the Keller-Segel model in $\mathbb{R}^3$, and establish the global existence of classical solutions. In fact, based on the local well-posedness and the regularity criterion (see Proposition 3.1 in Section 3), one only needs to obtain $L^\infty$ estimates to ensure the global existence of the solutions. In this paper, we fulfill the $L^\infty$ estimates by using Green's function method and it results in pointwise space-time structures of the solutions together with the global existence in Theorem \ref{main}-\ref{main0}. We should mention that the decaying structures in spatial variables required in initial conditions \eqref{ini}-\eqref{inic} are for technical reason, but it is interesting that one could show the equations \eqref{pro} with such initial data may blow up in finite time if $A=0$.

\vskip .05in

The use of Green's function in this paper is non-classical in the sense that the initial data are not required to be small enough and there is a variable coefficient in the linear parts of \eqref{pro}. In fact, this variable coefficient $Ay$ is the key ingredient for dissipation enhancement, which means that different from previous cases, one should not only overcome difficulties caused by variable coefficients, but also capture the advantage arising from them. The construction of Green's function for linearized equations with variable coefficients is highly non-trivial and most of the solved cases highly depend on the detailed structures of the variable coefficients. One could refer to \cite{l,lw,ly,lz,lz2,y} for the constructions of Green's functions for shock profiles and viscous rarefaction waves. Here, we use the precise formula given in \cite{mp} of Green's function for the heat equation in the background of a Couette flow in 2-D
$$\begin{cases}
\partial_t \mathbb{G}+Ay \partial_x \mathbb{G}=\partial_{xx}\mathbb{G}+\partial_{yy}\mathbb{G},\\
\mathbb{G}(x,y,0;y_0)=\delta(x,y-y_0),
\end{cases}$$
\begin{equation}
\label{greenp2d}
\mathbb{G}(x,y,t;y_0)=\frac{1}{4\pi t}\left(1+\frac{1}{12}A^2 t^2\right)^{-1/2}
\cdot \exp{\left(\frac{-\left(x-\frac{At}{2}(y+y_0)\right)^2-\left(1+\frac 1{12}A^2t^2\right)(y-y_0)^2}{4t\left(1+\frac{1}{12}A^2 t^2\right)}\right)},
\end{equation}
to construct an analogue for \eqref{pro} and the details could be found in Section 2. The treatment of the "large" initial data depends on an observation of the Green's function: compared with the classical heat kernel in 2D $\mathbb{H}(x,y,t)\equiv \frac 1{4\pi t}\exp\left(\frac{-x^2-y^2}{4t}\right)$, there is an extra factor $\left(1+\frac{1}{12}A^2 t^2\right)^{-1/2}$ in $\mathbb{G}(x,y,t;y_0)$ given by \eqref{greenp2d}. This factor is an evidence of dissipation enhancement in $L^p (p>1)$ norm and also suggests that in the linear level for long time the solution should become sufficiently small since
$$\left(1+\frac{1}{12}A^2 t^2\right)^{-1/2}\ll 1,\ \ \ \ \ \ \ \ \text{for }t\gg A^{-1}.$$
In the nonlinear level, the time evolution of the solution is more complicated due to the nonlinear effect: a longer time is needed for the solution to become small enough. Also an interesting point in the analysis for nonlinear wave interactions is the fact that the singularity $t^{-1/2}$ in time occurs together with the extra decaying factor $\left(1+\frac{1}{12}A^2 t^2\right)^{-1/2}$ and due to this fact non-integrable singularities in time variable appear in the nonlinear terms. We overcome this trouble based on another key observation that the balancing point of these two effects, the singularity in time and the extra decaying factor, is $t=O(A^{-2/3})$, and describe the solution in different time regions according to this balancing point (see \eqref{atime}). The pointwise estimate used in this paper is a powerful tool for a clear description of space-time structures of the solution and the detailed information obtained here helps us to capture the enhanced dissipation effects for all frequencies. It is a pity that the Green's functions for dissipative equations with other shear flows are still unclear and we hope that the method developed here can shed a light on other cases and they are left for the future.

\vskip .05in

Finally, we make a comparison about our results and those in \cite{bh}. Theorem \ref{main} and Theorem \ref{main0} show that for both the Parabolic-Parabolic case and the Parabolic-Elliptic case, a Couette flow with a sufficiently large amplitude can suppress the blow up of "large" solutions in $\mathbb{R}^3$. While in \cite{bh}, for periodic $x$ case in 3-D, a large Couette flow is not enough to guarantee the global existence of solutions and for global solutions, the initial mass should be less than $8\pi$ as well. The main reason is that there is no enhanced dissipation effect for the parts with zero frequency in $x$-direction and the equations for zero mode are similar to a classical 2-D Keller-Segel model. As show in our paper by the pointwise estimate method, dissipation enhancement occurs in all frequency for whole space case. Thus, in analysis we do not need to make different treatments for the zero-mode and non-zero modes. Compared with the periodic domain case, it is a surprising result and a totally new mechanism.

\vskip .05in

The rest of the paper is arranged as follows: In Section 2, we prepare all the tools needed including the Green's functions for $n(x,y,z,t)$ and $c(x,y,z,t)$ and also computational lemmas for nonlinear wave interactions and pointwise structure of $c(x,y,z,t)$. In Section 3, we first give the local well-posedness and the regularity criterion; then we pose ansatz assumptions based on initial wave propagations, and then apply the tools obtained in Section 2 and bootstrap for pointwise space-time structures of the solutions; the precise pointwise estimates for the solutions also give $L^p(2\le p\le \infty)$ bounds which results in the global existence of the solutions. Finally, we list all the wave pattern functions used in this paper and the interactions of different wave patterns in the appendix. Throughout the paper, $C$ denotes a generic positive constant which could change values from line to line.

\setcounter{equation}{0}

\section{Green's functions and computational lemmas}

In this section, we mainly prepare the analytical tools for pointwise estimates in Section 3. The tools include basic estimates for Green's functions and lemmas for linear and nonliear wave interactions. To make full use of the precise formula of Green's function for 2-D given in \cite{mp}, the equations in \eqref{pro} are considered separately. Thus in the construction of Green's function for $c(x,y,z,t)$, the linear term $n(x,y,z,t)$ will not be involved. It is treated as a linear "source" term which yields linear wave interactions

\subsection{Green's Functions}

We treat the two equations in \eqref{pro} separately in the construction of Green's function.

\subsubsection{Green's function for $n(x,y,z,t)$}

Define the Green's function $\mathbb{G}(x,y,z,t;y_0)$ to be the solution of
\begin{equation}
\label{green}
\begin{cases}
\partial_t \mathbb{G}+A y\partial_x \mathbb{G}-\Delta \mathbb{G}=0,\\
\mathbb{G}(x,y,z,0;y_0)=\delta_3(x,y-y_0,z),
\end{cases}
\end{equation}
with the 3-dimensional Dirac-delta function $\delta_3(x,y,z)$. It was shown in \cite{mp} that there is an explicit formula \eqref{greenp2d} for the solution of \eqref{green} in 2-D case and according to that formula, one could put down the solution $\mathbb{G}(x,y,z,t;y_0)$ as well
\begin{equation}
\label{greenp}
\mathbb{G}(x,y,z,t;y_0)=\frac{1}{\left(4\pi t\right)^{3/2}}\left(1+\frac{1}{12}A^2 t^2\right)^{-1/2}
\cdot \mathbb{W}(x,y,z,t;y_0;1,1,1;A),
\end{equation}
with the wave structure function $\mathbb{W}(x,y,z,t;y_0;D_1,D_2,D_3;A)$ defined as follows
\begin{equation}\nonumber
\mathbb{W}(x,y,z,t;y_0;D_1,D_2,D_3;A)\equiv \exp{\left(\frac{-\left(x-\frac{At}{2}(y+y_0)\right)^2}{4D_1t\left(1+\frac{1}{12}A^2 t^2\right)}-\frac{(y-y_0)^2}{4D_2t}-\frac{z^2}{4D_3t}\right)},
\end{equation}
for positive constants $D_1,D_2,D_3$.
A direct computation shows that extra decay rates for derivatives will be obtained and such effects are anisotropic:
\begin{multline}\label{greenpx}
\begin{aligned}
\partial_{x}\mathbb{G}(x,y,z,t;x_0,y_0)&=-\frac{x-x_0-\frac{At}{2}(y+y_0)}{2t\left(1+\frac{1}{12}A^2 t^2\right)}
\cdot \mathbb{W}(x,y,z,t;y_0;1,1,1;A)\\
&=O(1)t^{-2}\left(1+\frac{1}{12}A^2 t^2\right)^{-1}
\cdot \mathbb{W}(x,y,z,t;y_0;C_1,1,1;A),
\end{aligned}
\end{multline}

\begin{multline}\label{greenpy}
\begin{aligned}
\partial_{y_0}\mathbb{G}(x,y,z,t;x_0,y_0)
&=\left(\frac{At\left(x-x_0-\frac{At}{2}(y+y_0)\right)}{4t\left(1+\frac{1}{12}A^2 t^2\right)}+\frac{y-y_0}{2t}\right)
\cdot \mathbb{W}(x,y,z,t;y_0;1,1,1;A)\\
&=O(1)t^{-2}\left(1+\frac{1}{12}A^2 t^2\right)^{-1/2}
\cdot \mathbb{W}(x,y,z,t;y_0;C_1,C_1,1;A),
\end{aligned}
\end{multline}

\begin{equation}\label{greenpz}
\begin{aligned}
\partial_{z}\mathbb{G}(x,y,z,t;x_0,y_0)
&=-\frac{z}{2t}\cdot \mathbb{W}(x,y,z,t;y_0;1,1,1;A)\\
&=O(1)t^{-2}\left(1+\frac{1}{12}A^2 t^2\right)^{-1/2}
\cdot \mathbb{W}(x,y,z,t;y_0;1,1,C_1;A),
\end{aligned}
\end{equation}
for a positive constant $C_1>1$.

\vskip .05in

The following lemma shows the simple initial propagation for initial data decaying fast enough:

\begin{lemma}\label{lemma1}
For any $0\le \beta\le \frac {1}{2}$ and the positive constant $C_1$ in \eqref{greenpx}-\eqref{greenpz}, one has that
\begin{multline}\label{green1}
\begin{aligned}
&\left|\iiint_{\mathbb{R}^3}\mathbb{G}(x-x_0,y,z-z_0,t;y_0)\cdot e^{-\left(|x_0|+|y_0|+|z_0|\right)/C_*}dx_0dy_0dz_0\right|\\
=&O(1)t^{-1/2+\beta}\left(1+t\right)^{-1-\beta}\left(1+A^2t^2\right)^{-1/2+\beta}
\cdot\mathscr{W}^x(x,y,t;16,6C_*;A)\mathscr{W}^{o}(y,t;9,9C_*)\mathscr{W}^o(z,t;9,4C_*),
\end{aligned}
\end{multline}

\begin{multline}\label{green2}
\begin{aligned}
&\left|\iiint_{\mathbb{R}^3}\partial_{x_0}\mathbb{G}(x-x_0,y,z-z_0,t;y_0)\cdot e^{-\left(|x_0|+|y_0|+|z_0|\right)/C_*}dx_0dy_0dz_0\right|\\
=&O(1)t^{-1+\beta}\left(1+t\right)^{-1-\beta}\left(1+A^2t^2\right)^{-1+\beta}
\cdot\mathscr{W}^x(x,y,t;16C_1,6C_*;A)\mathscr{W}^{o}(y,t;9,9C_*)\mathscr{W}^o(z,t;9,4C_*),
\end{aligned}
\end{multline}

\begin{multline}\label{green3}
\begin{aligned}
&\left|\iiint_{\mathbb{R}^2}\partial_{y_0}\mathbb{G}(x-x_0,y,z-z_0,t;y_0)\cdot e^{-\left(|x_0|+|y_0|+|z_0|\right)/C_*}dx_0dy_0dz_0\right|,\\
=&O(1)t^{-1+\beta}\left(1+t\right)^{-1-\beta}\left(1+A^2t^2\right)^{-1/2+\beta}
\cdot\mathscr{W}^x(x,y,t;16C_1,6C_*;A)\mathscr{W}^{o}(y,t;9C_1,9C_*)\mathscr{W}^o(z,t;9,4C_*),
\end{aligned}
\end{multline}

\begin{multline}\label{green4}
\begin{aligned}
&\left|\iiint_{\mathbb{R}^2}\partial_{z_0}\mathbb{G}(x-x_0,y,z-z_0,t;y_0)\cdot e^{-\left(|x_0|+|y_0|+|z_0|\right)/C_*}dx_0dy_0dz_0\right|\\
=&O(1)t^{-1+\beta}\left(1+t\right)^{-1-\beta}\left(1+A^2t^2\right)^{-1/2+\beta}
\cdot\mathscr{W}^x(x,y,t;16,6C_*;A)\mathscr{W}^{o}(y,t;9,9C_*)\mathscr{W}^o(z,t;9C_1,4C_*).
\end{aligned}
\end{multline}
Here, we denote the wave structure functions $\mathscr{W}^x(x,y,t;D_1,D_2;A)$ and $\mathscr{W}^o(y,t;D_1,D_2)$ with positive constants $D_1, D_2$ as follows:
\begin{equation}\nonumber
\mathscr{W}^x(x,y,t;D_1,D_2;A)\equiv \exp{\left(\frac{-\left(x-\frac{At}2 y\right)^2}{D_1t\left(1+A^2 t^2\right)}\right)}+\exp{\left(-\frac{\left|x-\frac{A t}2 y\right|}{D_2(1+At)}\right)},
\end{equation}
\begin{equation}\nonumber
\mathscr{W}^o(y,t;D_1,D_2)\equiv \exp{\left(-\frac{y^2}{D_1t}\right)+\exp{\left(-\frac{|y|}{D_2}\right)}}.
\end{equation}
\end{lemma}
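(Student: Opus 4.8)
The plan is to reduce the estimate to a product of one-dimensional Gaussian convolutions, exploiting the tensorized structure of $\mathbb{G}$ and the exponential initial data. First I would write $\mathbb{G}(x-x_0,y,z-z_0,t;y_0)$ explicitly via \eqref{greenp}: it factors, up to the prefactor $\left(4\pi t\right)^{-3/2}\left(1+\tfrac1{12}A^2t^2\right)^{-1/2}$, as a Gaussian in the combined variable $x-x_0-\tfrac{At}2(y+y_0)$ with width $\sim t(1+A^2t^2)$, times a Gaussian in $y-y_0$ of width $\sim t$, times a Gaussian in $z-z_0$ of width $\sim t$. Since the initial weight $e^{-(|x_0|+|y_0|+|z_0|)/C_*}$ also factors, the $z_0$-integral decouples immediately and is a standard heat-kernel-against-exponential convolution, producing the factor $t^{-1/2}(1+t)^{?}\mathscr{W}^o(z,t;9,4C_*)$ after the usual splitting into the regime $|z|\lesssim\sqrt t$ (Gaussian part) and $|z|\gtrsim\sqrt t$ (exponential tail part). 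The $x_0$-integral is again a one-dimensional convolution once we regard everything else as fixed, and it yields the $\mathscr{W}^x$ factor; the shift by $-\tfrac{At}2(y+y_0)$ inside the Gaussian is what forces the sheared variable $x-\tfrac{At}2 y$ to appear, and the effective width $t(1+A^2t^2)$ is what produces the $(1+A^2t^2)$-dependence in $\mathscr{W}^x$ and the enhanced-dissipation factor.

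Next I would handle the $y_0$-integral, which is the genuinely coupled one: after doing the $x_0$-integration, the remaining $y_0$-dependence sits both in the $y-y_0$ Gaussian and, more delicately, in the residual of the $x$-Gaussian through the $\tfrac{At}2 y_0$ shift. The trick is to complete the square in $y_0$: the combined quadratic form in $y_0$ coming from $\tfrac{(y-y_0)^2}{4t}$ and from the $A^2t^2 y_0^2/(t(1+A^2t^2))$-type term from the $x$-Gaussian has coefficient bounded below by a constant multiple of $t^{-1}$ (the $A$-dependence cancels favourably because of the $1+A^2t^2$ in the denominator), so the $y_0$-integral is still a convolution of a width-$O(t)$ Gaussian against $e^{-|y_0|/C_*}$, producing $\mathscr{W}^o(y,t;9,9C_*)$ after the same near/far splitting. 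Throughout I would absorb the numerous harmless constants into the enlargement of the exponential constants — this is exactly why the output constants in \eqref{green1}–\eqref{green4} are $16, 6C_*, 9, 9C_*, 9, 4C_*$ rather than $1$: each convolution costs a loss in the Gaussian/exponential rate, and one tracks these via the elementary inequality $e^{-a^2}e^{-b^2}\le e^{-(a+b)^2/2}$ and $ab\le \tfrac14 a^2 + b^2$. For the derivative estimates \eqref{green2}–\eqref{green4} one uses \eqref{greenpx}–\eqref{greenpz}: the extra algebraic prefactors $\tfrac{x-x_0-\frac{At}2(y+y_0)}{t(1+A^2t^2)}$, etc., are bounded by $t^{-1/2}(1+A^2t^2)^{-1/2}$ times a slightly wider Gaussian (this is the standard $|s|e^{-s^2}\le C e^{-s^2/2}$ estimate), which accounts for the shift from $t^{-1/2+\beta}$ to $t^{-1+\beta}$ and for the extra $(1+A^2t^2)^{-1/2}$ in \eqref{green2}.

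The role of the parameter $\beta\in[0,\tfrac12]$ is an interpolation device: the bare convolution gives the estimate with $\beta=\tfrac12$ (no gain in $t$ for small $t$, i.e. $t^{0}$), while for large $t$ one wants the full $t^{-1/2}(1+t)^{-1}$ parabolic decay, which corresponds to $\beta=0$; intermediate $\beta$ just trades a power of $t$ near the origin against a power of $(1+t)$ at infinity, and is obtained by writing $t^{-1/2+\beta}(1+t)^{-1-\beta} = \left(t^{-1/2}(1+t)^{-1}\right)\cdot\left(t(1+t)^{-1}\right)^{\beta}$ and noting the second factor is $\le 1$. I would present this as a single interpolated bound rather than proving each $\beta$ separately. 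The main obstacle, and the step I would spend the most care on, is the $y_0$-completion-of-the-square: one must check that after integrating out $x_0$ the coefficient of $y_0^2$ in the exponent is $\ge c/t$ \emph{uniformly in $A$}, so that the $A$-dependence remains confined to the $\mathscr{W}^x$ factor and does not leak into the $y$- or $z$-Gaussians — otherwise the claimed anisotropy of the estimate would fail. Everything else is bookkeeping of constants, which I would organize by first proving a one-dimensional lemma (heat-kernel-of-width-$w$ convolved with $e^{-|s|/C_*}$ is $\lesssim \min\{w^{-1/2},1\}\,(\text{Gaussian of width }Cw + \text{exponential tail}))$ and then applying it three times.
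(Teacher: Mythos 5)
Your tensorization plan works for the $z_0$-integral and for the $x_0$-integral at \emph{fixed} $y_0$, but it misplaces the crux of the lemma. The $x_0$-convolution does not yield the $\mathscr{W}^x$ factor: it yields a Gaussian-plus-exponential pattern in the variable $x-\frac{At}{2}(y+y_0)$, which still carries $y_0$, and the discrepancy $\frac{At}{2}y_0$ is of size $At$ even for $|y_0|=O(1)$, hence far larger than the Gaussian width $\sqrt{t(1+A^2t^2)}$ when $t<1$. Your proposed remedy — completing the square in $y_0$ — does not deliver the claimed structure: the exact marginalization of the two Gaussians recentres the $x$-profile at $x=Aty$ rather than $x=\frac{At}{2}y$, shifts the centre of the width-$\sqrt{t}$ Gaussian in $y_0$ to an $x$-dependent point (so the subsequent convolution with $e^{-|y_0|/C_*}$ does not simply produce $\mathscr{W}^o(y,t;9,9C_*)$), and, decisively, no Gaussian manipulation can generate the tail term $\exp\bigl(-\frac{|x-\frac{At}{2}y|}{6C_*(1+At)}\bigr)$ of $\mathscr{W}^x$ with the enlarged scale $1+At$. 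That term is not a harmless enlargement of constants: at a point such as $(x,y,z)=(2At,2,0)$ with $A^{-2/3}\le t\le 1$ (so $At\gg1$), the mass coming from $x_0\in[0,1]$, $y_0\in[2,2+\sqrt{t}\,]$, $z_0\in[0,\sqrt{t}\,]$ already gives a lower bound $c\,t^{-1/2}(1+A^2t^2)^{-1/2}$ for the left-hand side of \eqref{green1}, while the Gaussian part of $\mathscr{W}^x$ there is $e^{-c/t}$ and an exponential tail on scale $C_*$ instead of $C_*(1+At)$ would be $e^{-cAt}$; so any argument producing only Gaussians, or unit-scale tails, proves a false bound. The paper's proof generates exactly this $(1+At)$-scaled tail by splitting the $y_0$-integration into the regions $|x-\frac{At}{2}y|\gtrless 2At|y_0|$ combined with $|y|\gtrless 3|y_0|$, trading the weight $e^{-|y_0|/C_*}$ for $\exp\bigl(-c\,|x-\frac{At}{2}y|/(AtC_*)\bigr)$ in the second region (see \eqref{e2}); this coupled splitting — not the uniform lower bound $c/t$ on the coefficient of $y_0^2$, which is true but beside the point — is the step your plan is missing.

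There is also a bookkeeping error in your treatment of $\beta$. Your factorization $t^{-1/2+\beta}(1+t)^{-1-\beta}=\bigl(t^{-1/2}(1+t)^{-1}\bigr)\bigl(t(1+t)^{-1}\bigr)^{\beta}$ with the second factor $\le 1$ shows that the $\beta>0$ bound is \emph{smaller}, i.e.\ stronger, than the $\beta=0$ bound in those factors, so it cannot be deduced from it; and it omits $(1+A^2t^2)^{-1/2+\beta}$ altogether, because of which no single value of $\beta$ dominates (compare $\beta=0$ and $\beta=1/2$ across $t\approx A^{-2/3}$). For $t<1$ one must prove \emph{both} endpoints — $\beta=1/2$ by integrating the three Gaussian factors (gaining $t^{3/2}(1+A^2t^2)^{1/2}$, as in \eqref{2.10}) and $\beta=0$ by integrating the exponential weight in $x_0$ instead (keeping $(1+A^2t^2)^{-1/2}$ at the price of $t^{-1/2}$, as in \eqref{e1}) — and then interpolate; the intermediate values are genuinely used later (the ansatz on $A^{-\theta}<t\le1$ corresponds to $\beta=\tfrac14-\tfrac{\gamma}4$). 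For $t\ge1$ your plan agrees with the paper's.
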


\begin{remark}This lemma reveals that the extra decaying rate $\left(1+A^2t^2\right)^{-1/2}$ goes with the singularity of heat kernel $t^{-1/2}$ and this extra decaying rate disappears if one removes the singularity $t^{-1/2}$ as what one does in dealing with the heat equation. Here and in the rest of the paper, we use a parameter $\beta\in(0,1/2)$ to balance these two effects.
\end{remark}

\begin{proof}
When $t\ge 1$, one has that $\frac{1}{2t}\le\frac{1}{1+t}$ and thus
\begin{multline}\label{e2}
\begin{aligned}
&\left|\iiint_{\mathbb{R}^3}\mathbb{G}(x-x_0,y,z-z_0,t;y_0)\cdot e^{-\left(|x_0|+|y_0|+|z_0|\right)/C_*}dx_0dy_0dz_0\right| \\
\leq &C\left(1+t\right)^{-3/2}\left(1+A^2t^2\right)^{-1/2}\\
&\cdot \left(\iint_{\mathbb{R}^2}\exp{\left(\frac{-\left(y-y_0\right)^2-\left(z-z_0\right)^2}{4t}-\frac{|y_0|+|z_0|}{C_*}\right)}
\cdot \left(\int_{\mathbb{R}}\exp{\left(-\frac{\left(x-x_0-\frac{At}2(y+y_0)\right)^2}{4t\left(1+\frac{1}{12}A^2 t^2\right)}-\frac{|x_0|}{C_*}\right)}dx_0\right)dy_0dz_0\right)\\
\leq& C\left(1+t\right)^{-3/2}\left(1+A^2t^2\right)^{-1/2}\cdot\left(\left(\int_{|z|\ge 3|z_0|}+\int_{|z|\le 3|z_0|}\right)\exp{\left(-\frac{\left(z-z_0\right)^2}{4t}-\frac{|z_0|}{C_*}\right)}dz_0\right)\\
&\cdot\left(\int_{\mathbb R}\exp{\left(-\frac{\left(y-y_0\right)^2}{4t}-\frac{|y_0|}{C_*}\right)}
\cdot \left(\left(\int_{\left|x-\frac{At}2( y+y_0)\right|\ge 3|x_0|}+\int_{\left|x-\frac{At}2( y+y_0)\right|\le 3|x_0|}\right)\exp{\left(-\frac{\left(x-x_0-\frac{At}2(y+y_0)\right)^2}{4t\left(1+\frac{1}{12}A^2 t^2\right)}-\frac{|x_0|}{C_*}\right)}dx_0\right)dy_0\right)\\
\leq & C\left(1+t\right)^{-3/2}\left(1+A^2t^2\right)^{-1/2}\cdot\mathscr{W}^o(z,t;9,4C_*)\\
&\cdot \left(\left(\int_{\left\{\left|x-\frac{At}2 y\right|\ge 2At|y_0|\right\}\cap\left\{\left|y\right|\ge 3\left|y_0\right|\right\}}+\int_{\left\{\left|x-\frac{At}2 y\right|\ge 2At|y_0|\right\}\cap\left\{\left|y\right|\le 3\left|y_0\right|\right\}}+\int_{\left\{\left|x-\frac{At}2 y\right|\le 2At|y_0|\right\}\cap\left\{\left|y\right|\ge 3\left|y_0\right|\right\}}+\int_{\left\{\left|x-\frac{At}2 y\right|\le 2At|y_0|\right\}\cap\left\{\left|y\right|\le 3\left|y_0\right|\right\}}\right)\right.\\
&\left.\ \exp{\left(-\frac{\left(y-y_0\right)^2}{4t}-\frac{|y_0|}{C_*}\right)}
\cdot\left(\exp{\left(-\frac{\left(x-\frac{At}2(y+y_0)\right)^2}{9t\left(1+\frac{1}{12}A^2 t^2\right)}\right)}+\exp\left(-\frac{\left|x-\frac{At}2( y+y_0)\right|}{4C_*}\right)\right)dy_0\right)\\
\leq &C\left(1+t\right)^{-3/2}\left(1+A^2t^2\right)^{-1/2}\cdot \mathscr{W}^x(x,y,t;16,9C_*;A)\mathscr{W}^{o}(y,t;9,9C_*)\mathscr{W}^o(z,t;9,4C_*),
\end{aligned}
\end{multline}
which verifies \eqref{green1} for $t\ge 1$. Here, to maximum the decaying rate in time $t$, we only integrate the exponential functions, e.g.
\begin{multline}\label{e1}
\begin{aligned}
&\left(\int_{\left|x-\frac{At}2( y+y_0)\right|\ge 3|x_0|}+\int_{\left|x-\frac{At}2( y+y_0)\right|\le 3|x_0|}\right)\exp{\left(-\frac{\left(x-x_0-\frac{At}2(y+y_0)\right)^2}{4t\left(1+\frac{1}{12}A^2 t^2\right)}-\frac{|x_0|}{C_*}\right)}dx_0\\
=&\left(\int_{\left|x-\frac{At}2( y+y_0)\right|\ge 3|x_0|}+\int_{\left|x-\frac{At}2( y+y_0)\right|\le 3|x_0|}\right)\exp{\left(-\frac{\left(x-x_0-\frac{At}2(y+y_0)\right)^2}{4t\left(1+\frac{1}{12}A^2 t^2\right)}-\frac{3|x_0|}{4C_*}-\frac{|x_0|}{4C_*}\right)}dx_0\\
\le& \left(\exp{\left(-\frac{\left(x-\frac{At}2(y+y_0)\right)^2}{9t\left(1+\frac{1}{12}A^2 t^2\right)}\right)}+\exp\left(-\frac{\left|x-\frac{At}2( y+y_0)\right|}{4C_*}\right)\right)
\int_{\mathbb{R}} e^{-|x_0|/4C_*}dx_0\\
\le &C\left(\exp{\left(-\frac{\left(x-\frac{At}2(y+y_0)\right)^2}{9t\left(1+\frac{1}{12}A^2 t^2\right)}\right)}+\exp\left(-\frac{\left|x-\frac{At}2( y+y_0)\right|}{4C_*}\right)\right).
\end{aligned}
\end{multline}
For $0<t<1$, one could integrate the heat kernels and an analogue of \eqref{e1} could be as follows:
\begin{multline}\label{2.10}
\begin{aligned}
&\left(\int_{\left|x-\frac{At}2( y+y_0)\right|\ge 4|x_0|}+\int_{\left|x-\frac{At}2( y+y_0)\right|\le 4|x_0|}\right)\exp{\left(-\frac{\left(x-x_0-\frac{At}2(y+y_0)\right)^2}{4t\left(1+\frac{1}{12}A^2 t^2\right)}-\frac{|x_0|}{C_*}\right)}dx_0\\
\le& \left(\exp{\left(-\frac{\left(x-\frac{At}2(y+y_0)\right)^2}{8t\left(1+\frac{1}{12}A^2 t^2\right)}\right)}+\exp\left(-\frac{\left|x-\frac{At}2( y+y_0)\right|}{4C_*}\right)\right)
\int_{\mathbb{R}} \exp{\left(\frac{-\left(x-x_0-\frac{At}2(y+y_0)\right)^2}{36t\left(1+\frac{1}{12}A^2 t^2\right)}\right)}dx_0\\
\le& Ct^{1/2}\left(1+A^2t^2\right)^{1/2}\left(\exp{\left(-\frac{\left(x-\frac{At}2(y+y_0)\right)^2}{8t\left(1+\frac{1}{12}A^2 t^2\right)}\right)}+\exp\left(-\frac{\left|x-\frac{At}2( y+y_0)\right|}{4C_*}\right)\right).
\end{aligned}
\end{multline}
Moreover, in the integration with respect to $y_0$ and $z_0$ in \eqref{e2}, we could compute similarly and also integrate the heat kernels instead of exponential functions to obtain an extra factor $t$. This verifies \eqref{green1} for the case $0<t<1$ and $\beta=1/2$. If one only integrates the heat kernel with respect to $y_0$ and $z_0$ and still integrates the exponential functions with respect to $x_0$, one could verify \eqref{green1} for $0<t<1$ and $\beta=0$. The estimates \eqref{green1} for $0<\beta<1/2$ could be obtained by interpolation.

Similarly, one could use \eqref{greenpx}-\eqref{greenpz} to verify the estimates \eqref{green2} and \eqref{green3} for the integrals involving derivatives.
\end{proof}

\subsubsection{Green's function for $c(x,y,z,t)$}

Similar to \eqref{green}, one could define the Green's function $\mathbb{G}_{c,1}(x,y,z,t;y_0)$ by the following Cauchy problem
\begin{equation}\label{greenc}
\begin{cases}
\partial_t \mathbb{G}_{c,1}+Ay \partial_x \mathbb{G}_{c,1}-\Delta \mathbb{G}_{c,1}=- \mathbb{G}_{c,1},\\
\mathbb{G}_{c,1}(x,y,z,0;y_0)=\delta_3(x,y-y_0,z))
\end{cases}
\end{equation}
for the parabolic-parabolic case, i.e. $\epsilon\equiv1$ in \eqref{pro} and the Green's function $\mathbb{G}_{c,0}(x,y,z)$ by the elliptic equation
\begin{equation}\label{greenc0}
-\Delta \mathbb{G}_{c,0}+\mathbb{G}_{c,0}=\delta_3(x,y,z)
\end{equation}
for the parabolic-elliptic case, i.e. $\epsilon\equiv0$ in \eqref{pro}.

Both problems \eqref{greenc} and \eqref{greenc0} could be solved explictly: $\mathbb{G}_{c,1}(x,y,z,t;y_0)$ is given by
\begin{equation}\label{gc}
\mathbb{G}_{c,1}(x,y,z,t;y_0)=e^{-t}\mathbb{G}(x,y,z,t;y_0)
\end{equation}
with $\mathbb{G}$ defined by \eqref{green} and thus one could refer to Lemma \ref{lemma1} for the estimates for $\mathbb{G}_{c,1}$ and its derivatives; $\mathbb{G}_{c,0}$ is the famous Yukawa potential in 3-D:
\begin{equation}
\label{yukawa}
\mathbb{G}_{c,0}(x,y,z)=-\frac{1}{4\pi {r}} e^{-{r}},\ \ \ \ \ \ \ \
\left|\nabla \mathbb{G}_{c,0}(x,y,z)\right|\le \frac{C(1+r)e^{-r}}{r^2},
\end{equation}
with ${r}\equiv \sqrt{x^2+y^2+z^2}$.

The above two Green's functions are associated with $c(x,y,z,t)$ in the following sense: for $\epsilon=1$ in \eqref{pro}, the solution $c(x,y,z,t)$ of \eqref{pro} could be represented as follows
\begin{multline}\label{c1}
\begin{aligned}
c(x,y,z,t)=&\iiint_{\mathbb{R}^3} \mathbb{G}_{c,1}(x-x_0,y,z-z_0,t;y_0) c(x_0,y_0,z_0,0)dx_0dy_0dz_0\\
&+\int_0^t\iiint_{\mathbb{R}^3} \mathbb{G}_{c,1}(x-x_0,y,z-z_0,t-\sigma;y_0) n(x_0,y_0,z_0,\sigma)dx_0dy_0dz_0d\sigma,
\end{aligned}
\end{multline}
and for the case $\epsilon=0$, one has that
\begin{equation}\label{c0}
c(x,y,z,t)=\iiint_{\mathbb{R}^3} \mathbb{G}_{c,0}(x-x_0,y-y_0,z-z_0) n(x_0,y_0,z_0,t)dx_0dy_0dz_0.
\end{equation}
Later, these two representations \eqref{c1} and \eqref{c0} will be used to obtain the pointwise space-time structures of $c(x,y,z,t)$ once one has the pointwise estimate of $n(x,y,z,t)$.

\subsection{Computational Lemmas: Nonlinear Wave Interactions}

The next three lemmas are prepared for nonlinear wave interactions: In the nonlinear level, the time evolution of the solutions is more complicated than the one in the linear level and one needs to balance the singularity and the enhanced dissipation effect revealed by Lemma \ref{lemma1}. Therefore the nonlinear solutions are described in three different time regions (the details could be found in \eqref{ansatz} in Section 3), and one should verify the ansatz assumptions in those time regions correspondingly. The whole process contains two steps: the interactions between different wave pattern functions are listed in the appendix and in the following we will prove three lemmas for time evolutions after wave interactions. Denote
\begin{equation}\nonumber
\mathscr{W}(x,y,z,t;D_1,D_2,D_3;A)\equiv \mathscr{W}^x(x,y,t;D_1,D_1;A)\cdot \mathscr{W}^o(y,t;D_2,D_2)\cdot \mathscr{W}^o(z,t;D_3,D_3),
\end{equation}
with positive constants $D_1, D_2, D_3$ and wave pattern functions $\mathscr{W}^x(x,y,t;D_1,D_2;A), \mathscr{W}^0(y,t;D_1,D_2)$ defined by \eqref{wavexx} and \eqref{waveyz} respectively.

\begin{lemma}\label{lemma2} For positive constants $\gamma\in(0,1), \theta\in(2/3,1), C_1>1$ and $C',C''$ satisfying the requirements of Lemma \ref{lemmaz}-\ref{lemma0}, there exists a constant $\epsilon_0\equiv (1+\gamma)(3\theta/2-1)/2>0$ such that
\begin{multline}\label{nn1}
\begin{aligned}
&\int_0^{\min\left\{A^{-\theta}, t\right\}} \iiint_{\mathbb{R}^3}(t-\sigma)^{-2}\left(1+\frac{1}{12}A^2 (t-\sigma)^2\right)^{-1/2}\\[2mm]
&\cdot \mathbb{W}(x-x_0,y,z-z_0,t-\sigma;y_0;C_1,C_1,C_1;A)
\cdot \mathscr{W}(x_0,y_0,z_0,\sigma;C',C'',C'';A)dx_0dy_0dz_0d\sigma\\
=&O(1)\mathscr{A}_1(t)\mathscr{W}(x,y,z,t;\frac 32C',\frac 32C'',\frac 32C'';A),
\end{aligned}
\end{multline}
with $\mathscr{A}_1(t)$ defined by
\begin{equation}\label{aa1}
\mathscr{A}_1(t)\equiv \begin{cases}
t^{1/2},\ \ \ \ &\text{ for }\ 0< t\le A^{-\theta},\\
A^{-\epsilon_0}A^{-(1-\theta)\gamma}t^{-1/4-\gamma/4}\left(1+A^2t^2\right)^{-1/4+\gamma/4},\ \ \ \ &\text{ for }\ A^{-\theta}< t\le 1,\\
A^{1-2\theta}(1+t)^{-2}\left(1+A^2t^2\right)^{-1/2},\ \ \ \ &\text{ for } \ t>1.
\end{cases}
\end{equation}


\end{lemma}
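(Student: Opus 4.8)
\emph{Plan.} The strategy is to integrate out the three space variables first, using the wave--interaction estimates collected in the appendix together with Lemma \ref{lemmaz}--\ref{lemma0}, and then to estimate the remaining one--dimensional $\sigma$--integral on each of the three time windows appearing in \eqref{aa1}. For fixed $\sigma\in(0,\min\{A^{-\theta},t\})$ the kernel $(t-\sigma)^{-2}(1+\tfrac1{12}A^2(t-\sigma)^2)^{-1/2}\,\mathbb{W}(\,\cdot\,,t-\sigma;y_0;C_1,C_1,C_1;A)$ is precisely of the type produced in \eqref{greenpy}--\eqref{greenpz}, i.e. it dominates $|\nabla\mathbb{G}(\,\cdot\,,t-\sigma)|$, so the interaction lemmas turn the spatial convolution occurring in \eqref{nn1} into
\[
G_\beta(\sigma,t-\sigma)\cdot\mathscr{W}\bigl(x,y,z,t;\tfrac32C',\tfrac32C'',\tfrac32C'';A\bigr),
\]
the hypotheses on $C',C''$ from Lemma \ref{lemmaz}--\ref{lemma0} guaranteeing that the constants inflate by no more than the factor $\tfrac32$. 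Here $G_\beta$ is the one--parameter family of scalar gains already met in Lemma \ref{lemma1}: integrating the kernel and leaving $\mathscr{W}(\cdot,\sigma)$ pointwise gives a factor $\lesssim(t-\sigma)^{-1/2}$ (the bound $\|\nabla\mathbb{G}(\cdot,t-\sigma)\|_{L^1}\,\|\mathscr{W}(\cdot,\sigma)\|_\infty$, $\beta=\tfrac12$), while integrating $\mathscr{W}(\cdot,\sigma)$ and leaving the kernel pointwise gives a factor $\lesssim(t-\sigma)^{-2}(1+A^2(t-\sigma)^2)^{-1/2}(1+A\sigma)$ (the bound $\|\nabla\mathbb{G}(\cdot,t-\sigma)\|_{\infty}\,\|\mathscr{W}(\cdot,\sigma)\|_{L^1}$, $\beta=0$, using that $\|\mathscr{W}(\cdot,\sigma)\|_{L^1}$ is of order $1+A\sigma$ for $\sigma<1$), and $\beta\in[0,\tfrac12]$ interpolates between these. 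Thus \eqref{nn1} reduces to the scalar inequality $\int_0^{\min\{A^{-\theta},t\}}G_\beta(\sigma,t-\sigma)\,d\sigma\le O(1)\,\mathscr{A}_1(t)$ for a suitable $\beta=\beta(\sigma,t)$.

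Next I would treat the three windows in turn. On $0<t\le A^{-\theta}$ one has $t<A^{-2/3}$ (as $\theta>2/3$), so the shear width $\sqrt{(t-\sigma)(1+A^2(t-\sigma)^2)}$ of $\mathbb{W}(\cdot,t-\sigma)$ stays below $1$, while the exponential tails of $\mathscr{W}(\cdot,\sigma)$ always have width at least $1$; hence $\mathbb{W}(\cdot,t-\sigma)$ is the narrower packet in every direction, the $\beta=\tfrac12$ gain is the efficient one, and $\int_0^{t}(t-\sigma)^{-1/2}\,d\sigma=O(t^{1/2})$, which is the first line of \eqref{aa1}. In particular this already shows that the \emph{a priori} non--integrable factor $(t-\sigma)^{-2}$ is harmless once the full three--dimensional spatial gain has been spent. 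On $t>1$ one has $t-\sigma\sim t$ throughout the short range of integration and $\mathbb{W}(\cdot,t-\sigma)$ is now the wider packet in every direction, so the $\beta=0$ gain is efficient and $\int_0^{A^{-\theta}}t^{-2}(1+A^2t^2)^{-1/2}(1+A\sigma)\,d\sigma\lesssim A^{1-2\theta}(1+t)^{-2}(1+A^2t^2)^{-1/2}$, since $\int_0^{A^{-\theta}}(1+A\sigma)\,d\sigma=O(A^{1-2\theta})$; this is the third line of \eqref{aa1}, and the power $A^{1-2\theta}$ (negative because $\theta>2/3$) is precisely the mass the short window $[0,A^{-\theta}]$ can feed back through the nonlinearity.

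The delicate case --- and the step I expect to be the main obstacle --- is the intermediate window $A^{-\theta}<t\le1$. There $t-\sigma\sim t$ still holds, but now the shear width $\sqrt{t(1+A^2t^2)}\sim At^{3/2}$ of $\mathbb{W}(\cdot,t-\sigma)$ and the width $\sim 1+A\sigma$ of $\mathscr{W}(\cdot,\sigma)$ are genuinely comparable when $\sigma$ is near $A^{-\theta}$, so neither endpoint value of $\beta$ is uniformly efficient: one must choose $\beta=\beta(t)$ interpolating between $0$ and $\tfrac12$ so as to land exactly on $t^{-1/4-\gamma/4}(1+A^2t^2)^{-1/4+\gamma/4}$. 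Carrying this interpolation through the $\sigma$--integral of $G_\beta$ over $[0,A^{-\theta}]$ upgrades the crude $\beta=0$ bound $A^{1-2\theta}t^{-2}(1+A^2t^2)^{-1/2}$ to $A^{-\epsilon_0}A^{-(1-\theta)\gamma}t^{-1/4-\gamma/4}(1+A^2t^2)^{-1/4+\gamma/4}$, and tracking the exponents shows that the surplus gained by integrating a gain carrying a three--dimensional ($t^{3/2}$--type) mass factor over a window of length $A^{-\theta}$ is exactly $\epsilon_0=(1+\gamma)(3\theta/2-1)/2$; the restriction $\theta\in(2/3,1)$ is used precisely here, to make $\epsilon_0>0$, while $\gamma\in(0,1)$ is needed only to keep the intermediate exponents of the correct sign. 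Concretely I would split $[0,\min\{A^{-\theta},t\}]$ into $\sigma\in[0,t/2]$ (source narrower, the $\beta$--gain above) and $\sigma\in[t/2,\min\{A^{-\theta},t\}]$ (kernel narrower near $\sigma=t$, where only the integrability of $(t-\sigma)^{-1/2}$ matters), and then verify the three inequalities defining \eqref{aa1} by elementary calculus, the signs being controlled by $\theta\in(2/3,1)$ and $\gamma\in(0,1)$. Only the kernel shape \eqref{greenp}, the derivative bounds \eqref{greenpy}--\eqref{greenpz} and the wave--pattern arithmetic of the appendix are needed.
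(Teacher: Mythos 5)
Your plan is correct and, at the skeleton level, is the paper's own proof: reduce the spatial convolution to $\mathscr{W}(x,y,z,t;\tfrac32C',\tfrac32C'',\tfrac32C'';A)$ times a scalar gain via Lemma \ref{lemmaz}--\ref{lemma0}, then estimate the one-dimensional $\sigma$-integral window by window; your first and third windows are exactly \eqref{2.22} and \eqref{2.36} (the factor $(\sigma^{1/2}+1)$ there is $O(1)$ on $[0,A^{-\theta}]$, so your $1+A\sigma$ mass is the same bound). The only genuine divergence is the intermediate window $A^{-\theta}<t\le 1$, and there your description is slightly off about the mechanism. The paper does not interpolate: it splits at $t=2A^{-\theta}$, reuses the pure kernel-mass bound $t^{1/2}$ on $(A^{-\theta},2A^{-\theta}]$, uses a direction-wise mixed gain (kernel mass in $y,z$, source width in $x$, i.e. $(t-\sigma)\bigl(\sigma^{1/2}(1+A^2\sigma^2)^{1/2}+1+A\sigma\bigr)$) on $(2A^{-\theta},1]$, and then converts the resulting monomials $A^{-\theta/2}$ and $A^{1-2\theta}t^{-1}(1+A^2t^2)^{-1/2}$ into the target form by the purely algebraic comparisons \eqref{add}, \eqref{2.222}, \eqref{2.221}; that exponent bookkeeping, which comes out as an exact identity, is where $\epsilon_0=(1+\gamma)(3\theta/2-1)/2$ and the constraint $\theta>2/3$ actually enter. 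Two remarks on your variant. First, your family $G_\beta$ has exponents linear in $\beta$, so its optimum is always attained at an endpoint; a strictly intermediate $\beta(t)$ is never needed, and the real load is carried by precisely the algebraic conversion you only assert (``tracking the exponents''), so in a write-up that computation must be displayed. Second, choosing the better pure endpoint per $t$ does close: for $t\ge 2A^{-\theta}$ the target is comparable to $A^{-c}t^{-3/4+\gamma/4}$ with $c=\epsilon_0+(1-\theta)\gamma+\tfrac{1-\gamma}{2}=\tfrac{\theta(3-\gamma)}{4}$, the two endpoint bounds are $A^{-\theta}t^{-1/2}$ and $A^{-2\theta}t^{-3}$, and their minimum is worst relative to the target at the crossing $t=A^{-2\theta/5}$, where the comparison reduces to $\tfrac{4\theta}{5}\ge\tfrac{3\theta(3-\gamma)}{20}$, which holds for all admissible $\theta,\gamma$ (with room to spare, so your route is valid though lossier than the paper's mixed gain); the remaining sliver $t\in(A^{-\theta},2A^{-\theta}]$ is handled exactly as in \eqref{add}--\eqref{2.222}. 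So: no gap in the approach, same overall strategy, a slightly different but workable treatment of the middle window, contingent on actually carrying out the deferred exponent computation.
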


\begin{proof}
For any $t>0$, by direct computations based on Lemma \ref{lemmaz}-\ref{lemma0}, one has that
\begin{multline}\label{2.22}
\begin{aligned}
&\int_0^{t} \iiint_{\mathbb{R}^3}(t-\sigma)^{-2}\left(1+\frac{1}{12}A^2 (t-\sigma)^2\right)^{-1/2}\\[2mm]
&\cdot \mathbb{W}(x-x_0,y,z-z_0,t-\sigma;y_0;C_1,C_1,C_1;A)
\cdot \mathscr{W}(x_0,y_0,z_0,\sigma;C',C'',C'';A)dx_0dy_0dz_0d\sigma\\[2mm]
\le& C\left(\int_0^t (t-\sigma)^{-2}\left(1+\frac{1}{12}A^2 (t-\sigma)^2\right)^{-1/2}\cdot (t-\sigma)^{3/2}\left(1+A^2 (t-\sigma)^2\right)^{1/2}d\sigma\right)
\cdot \mathscr{W}(x,y,z,t;\frac 32C',\frac 32C'',\frac 32C'';A)\\
\le &Ct^{1/2}\mathscr{W}(x,y,z,t;\frac 32C',\frac 32C'',\frac 32C'';A).
\end{aligned}
\end{multline}
It verifies \eqref{nn1} for $t\in(0,A^{-\theta}]$.

If $t\in(A^{-\theta},2A^{-\theta}]$, there exists some constant $C>1$ such that
\begin{equation}\label{add}
t^{1/2}\le CA^{-\theta/2},\ \ \ \ \ \ \ \ t^{-1/4-\gamma/4}\ge A^{\theta(1+\gamma)/4}/C,\ \ \ \ \ \ \ \ \left(1+A^2t^2\right)^{-1/4+\gamma/4}\ge A^{-(1-\gamma)(1-\theta)/2}/C
\end{equation}
and thus combining \eqref{2.22} and \eqref{add} one has that
\begin{multline}\label{2.222}
\begin{aligned}
&\int_0^{A^{-\theta}} \iiint_{\mathbb{R}^3}(t-\sigma)^{-2}\left(1+\frac{1}{12}A^2 (t-\sigma)^2\right)^{-1/2}\\[2mm]
&\cdot \mathbb{W}(x-x_0,y,z-z_0,t-\sigma;y_0;C_1,C_1,C_1;A)
\cdot \mathscr{W}(x_0,y_0,z_0,\sigma;C',C'',C'';A)dx_0dy_0dz_0d\sigma\\[2mm]
\le &C t^{-1/4-\gamma/4}\left(1+A^2 t^2\right)^{-1/4+\gamma/4}\mathscr{W}(x,y,z,t;\frac 32C',\frac 32C'',\frac 32C'';A)\cdot
\Big(A^{-\theta/2} \cdot A^{-\theta(1+\gamma)/4+(1-\theta)(1-\gamma)/2} \Big)\\
\le &C A^{-\epsilon_0-\gamma(1-\theta)} t^{-1/4-\gamma/4}\left(1+A^2 t^2\right)^{-1/4+\gamma/4}\cdot\mathscr{W}(x,y,z,t;\frac 32C',\frac 32C'',\frac 32C'';A),
\end{aligned}
\end{multline}
since $-\theta/2-\theta(1+\gamma)/4+(1-\theta)(1-\gamma)/2= -\epsilon_0-\gamma(1-\theta)-\theta/2<-\epsilon_0-\gamma(1-\theta)$ with $\epsilon_0\equiv (1+\gamma)(3\theta/2-1)/2$. For $t\in (2A^{-\theta},1]$, $t-\sigma \ge t/2$ and $1+\frac 1{12}A^2(t-\sigma)^2 \ge \frac{1}{48}\left(1+A^2t^2\right)$ for $\sigma\in[0,A^{-\theta}]$ and thus one could combine Lemma \ref{lemmaz}-\ref{lemma0} to yield that
\begin{multline}\label{2.221}
\begin{aligned}
&\int_0^{A^{-\theta}} \iiint_{\mathbb{R}^3}(t-\sigma)^{-2}\left(1+\frac{1}{12}A^2 (t-\sigma)^2\right)^{-1/2}\\[2mm]
&\cdot \mathbb{W}(x-x_0,y,z-z_0,t-\sigma;y_0;C_1,C_1,C_1;A)
\cdot \mathscr{W}(x_0,y_0,z_0,\sigma;C',C'',C'';A)dx_0dy_0dz_0d\sigma\\
\le& C\left(1+A^2 t^2\right)^{-1/2}\cdot\mathscr{W}(x,y,z,t;\frac 32C',\frac 32C'',\frac 32C'';A)\cdot \int_0^{A^{-\theta}} (t-\sigma)^{-2}\cdot(t-\sigma) \left(\sigma^{1/2}\left(1+A^2\sigma^2\right)^{1/2}+1+A\sigma\right)d\sigma\\
\le &C A^{1-2\theta}t^{-1}\left(1+A^2t^2\right)^{-1/2}\cdot\mathscr{W}(x,y,z,t;\frac 32C',\frac 32C'',\frac 32C'';A)\\
\le& C t^{-1/4-\gamma/4}\left(1+A^2 t^2\right)^{-1/4+\gamma/4}\cdot \mathscr{W}(x,y,z,t;\frac 32C',\frac 32C'',\frac 32C'';A)
\cdot A^{1-2\theta} A^{\theta(3-\gamma)/4-(1+\gamma)(1-\theta)/2}\\
\le& CA^{-\epsilon_0-(1-\theta)\gamma}t^{-1/4-\gamma/4} \left(1+A^2t^2\right)^{-1/4+\gamma/4}
\cdot \mathscr{W}(x,y,z,t;\frac 32C',\frac 32C'',\frac 32C'';A).
\end{aligned}
\end{multline}
Then, one could combine \eqref{2.222} and \eqref{2.221} to verify \eqref{nn1} for $t\in(A^{-\theta},1]$.

When $t>1$ and $\sigma\le A^{-\theta}\ll 1$, it is obvious that $t-\sigma>t/2>(1+t)/4$ and thus similar to \eqref{2.221} one has that
\begin{multline}\label{2.36}
\begin{aligned}
&\int_0^{A^{-\theta}} \iiint_{\mathbb{R}^3}(t-\sigma)^{-2}\left(1+\frac{1}{12}A^2 (t-\sigma)^2\right)^{-1/2}\\[2mm]
&\cdot \mathbb{W}(x-x_0,y,z-z_0,t-\sigma;y_0;C_1,C_1,C_1;A)
\cdot \mathscr{W}(x_0,y_0,z_0,\sigma;C',C'',C'';A)dx_0dy_0dz_0d\sigma\\
\le& C(1+t)^{-2}\left(1+A^2 t^2\right)^{-1/2}\cdot \mathscr{W}(x,y,z,t;\frac 32C',\frac 32C'',\frac 32C'';A)\cdot\left(\int_0^{A^{-\theta}} \left(\sigma\left(1+A^2\sigma^2\right)^{1/2}+1+A\sigma\right)\left(\sigma^{1/2}+1\right)d\sigma\right)\\
\le& CA^{1-2\theta}(1+t)^{-2}\left(1+A^2 t^2\right)^{-1/2}\mathscr{W}(x,y,z,t;\frac 32C',\frac 32C'',\frac 32C'';A)
\end{aligned}
\end{multline}
which verifies \eqref{nn1} for $t>1$.
\end{proof}

\begin{lemma}\label{lemma3} For positive constants $\theta, \gamma\in(0,1), C_1>1$ and $C',C''$ satisfying the requirements of Lemma \ref{lemmaz}-\ref{lemma0}, it holds that
\begin{multline}\label{nn2}
\begin{aligned}
&\int_{\min\left\{A^{-\theta},t\right\}}^{\min\left\{1,t\right\}} \iiint_{\mathbb{R}^3}(t-\sigma)^{-2}\left(1+\frac{1}{12}A^2 (t-\sigma)^2\right)^{-1/2}\sigma^{-1/2-\gamma/2}
\left(1+A^2\sigma^2\right)^{-1/2+\gamma/2}
\\[2mm]
&\cdot \mathbb{W}(x-x_0,y,z-z_0,t-\sigma;y_0;C_1,C_1,C_1;A)
\cdot \mathscr{W}(x_0,y_0,z_0,\sigma;C',C'',C'';A)dx_0dy_0dz_0d\sigma\\
=&O(1)\mathscr{A}_2(t)\mathscr{W}(x,y,z,t;\frac 32C',\frac 32C'',\frac 32C'';A).
\end{aligned}
\end{multline}
Here, the function $\mathscr{A}_2(t)$ is defined to be
\begin{equation}\label{aa2}
\mathscr{A}_2(t)\equiv \begin{cases}
0,\ \ \ \ &\text{ for }\ 0< t\le A^{-\theta},\\
t^{-1/4-\gamma/4}\left(1+A^2t^2\right)^{-1/4+\gamma/4},\ \ \ \ &\text{ for }\ A^{-\theta}< t\le 1,\\
A^{\gamma}(1+t)^{-2}\left(1+A^2t^2\right)^{-1/2},\ \ \ \ &\text{ for }\ t>1.
\end{cases}
\end{equation}
\end{lemma}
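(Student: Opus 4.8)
\textbf{Proof proposal for Lemma \ref{lemma3}.}

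The plan is to follow the same three-region strategy used in the proof of Lemma \ref{lemma2}, the only new feature being the presence of the extra temporal weight $\sigma^{-1/2-\gamma/2}(1+A^2\sigma^2)^{-1/2+\gamma/2}$ coming from the source term in the intermediate region $A^{-\theta}<\sigma<1$. First I would separate the spatial integral from the temporal integral: by the wave interaction estimates in the appendix (Lemma \ref{lemmaz}--\ref{lemma0}), the triple integral over $\mathbb{R}^3$ of $\mathbb{W}(x-x_0,y,z-z_0,t-\sigma;y_0;C_1,C_1,C_1;A)\cdot \mathscr{W}(x_0,y_0,z_0,\sigma;C',C'',C'';A)$ is bounded by $C(t-\sigma)^{3/2}(1+A^2(t-\sigma)^2)^{1/2}\big(\sigma^{1/2}(1+A^2\sigma^2)^{1/2}+1+A\sigma\big)$ times the merged wave pattern $\mathscr{W}(x,y,z,t;\frac32 C',\frac32 C'',\frac32 C'';A)$, exactly as in \eqref{2.22} and \eqref{2.221}. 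After this reduction the lemma becomes the purely scalar estimate
\begin{equation}\nonumber
\int_{\min\{A^{-\theta},t\}}^{\min\{1,t\}} (t-\sigma)^{-1/2}\big(1+A^2(t-\sigma)^2\big)^{0}\,\sigma^{-1/2-\gamma/2}\big(1+A^2\sigma^2\big)^{-1/2+\gamma/2}\,\big(\sigma^{1/2}(1+A^2\sigma^2)^{1/2}+1+A\sigma\big)\,d\sigma \le C\,\mathscr{A}_2(t),
\end{equation}
where I have already cancelled the full power $(t-\sigma)^{-2}\cdot(t-\sigma)^{3/2}$ and one factor of $(1+A^2(t-\sigma)^2)^{-1/2}\cdot(1+A^2(t-\sigma)^2)^{1/2}$; note the integrand's $\sigma$-singularity is $\sigma^{-1/2-\gamma/2+1/2}=\sigma^{-\gamma/2}$ after absorbing the $\sigma^{1/2}$ from the source factor, which is integrable since $\gamma<1$.

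For the region $0<t\le A^{-\theta}$ there is nothing to prove since the integration interval is empty and $\mathscr{A}_2(t)=0$. For $A^{-\theta}<t\le 1$ I would split the $\sigma$-integral at $\sigma=t/2$: on $[\,\cdot\,,t/2]$ one has $t-\sigma\simeq t$, so $(t-\sigma)^{-1/2}\simeq t^{-1/2}$ comes out of the integral and one is left with $\int_0^{t/2}\sigma^{-\gamma/2}(1+A^2\sigma^2)^{\gamma/2}\,d\sigma$ (bounding the source bracket crudely by $C\sigma^{1/2}(1+A^2\sigma^2)^{1/2}+C$), which is $O(t^{1-\gamma/2}(1+A^2t^2)^{\gamma/2})+O(t)$; multiplying by $t^{-1/2}$ gives $O(t^{1/2-\gamma/2}(1+A^2t^2)^{\gamma/2})$, and one checks this is dominated by $t^{-1/4-\gamma/4}(1+A^2t^2)^{-1/4+\gamma/4}$ for $t\le 1$ using $t^{3/4-\gamma/4}(1+A^2t^2)^{3\gamma/4-1/4}\le 1$ (the worst case $A^2t^2\gg1$ needs $3/4-\gamma/4+3\gamma/2-1/2 = 1/4+5\gamma/4>0$, true). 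On $[t/2,t]$ one has $\sigma\simeq t$, so the $\sigma$-weights come out and one integrates $(t-\sigma)^{-1/2}$ near the endpoint, producing an extra $t^{1/2}$; collecting powers gives the same bound. For $t>1$ I would again split, but now at $\sigma=1$ is outside the range, so the whole integral runs over $(A^{-\theta},1]$ where $t-\sigma\simeq t\simeq 1+t$; pulling out $(1+t)^{-1/2}$ — wait, more carefully, I must recover $(1+t)^{-2}(1+A^2t^2)^{-1/2}$, so I should \emph{not} cancel all the $(t-\sigma)$ powers but keep them as $(t-\sigma)^{-2}(1+A^2(t-\sigma)^2)^{-1/2}\simeq (1+t)^{-2}(1+A^2t^2)^{-1/2}$ since $\sigma\le 1\ll t$; then the remaining $\sigma$-integral $\int_{A^{-\theta}}^{1}(t-\sigma)^{3/2}(1+A^2(t-\sigma)^2)^{1/2}\,\sigma^{-1/2-\gamma/2}(1+A^2\sigma^2)^{-1/2+\gamma/2}(\sigma^{1/2}(1+A^2\sigma^2)^{1/2}+1+A\sigma)\,d\sigma$ — after extracting the $(t-\sigma)$-dependence as above, which contributes $(1+t)^{3/2}(1+A^2t^2)^{1/2}\cdot (1+t)^{-2}(1+A^2t^2)^{-1/2}$ — reduces to $\int_{A^{-\theta}}^1 \sigma^{-\gamma/2}(1+A^2\sigma^2)^{\gamma/2}\,d\sigma + \int_{A^{-\theta}}^1 \sigma^{-1/2-\gamma/2}(1+A\sigma)(1+A^2\sigma^2)^{-1/2+\gamma/2}d\sigma$, both of which are $O(A^{\gamma})$ (the dominant contribution is near $\sigma=1$ where $1+A^2\sigma^2\simeq A^2$, giving $A^{\gamma}$); this yields exactly $A^{\gamma}(1+t)^{-3/2}(1+A^2t^2)^{-1/2}$, hmm, I need $(1+t)^{-2}$, so in fact I keep $(t-\sigma)^{-2}\cdot(t-\sigma)^{3/2}=(t-\sigma)^{-1/2}\simeq (1+t)^{-1/2}$ and must get the remaining $(1+t)^{-3/2}$ from... this is the point requiring care.

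The main obstacle, as the last region shows, is the precise bookkeeping of the powers of $(1+t)$ and $(1+A^2t^2)$ in the long-time regime $t>1$: one must decide exactly which factors of $(t-\sigma)$ and $(1+A^2(t-\sigma)^2)$ to cancel against the source-kernel's spatial-integral contribution $(t-\sigma)^{3/2}(1+A^2(t-\sigma)^2)^{1/2}$ and which to keep, so that after the $\sigma$-integration (whose $\sigma$-part produces only an $O(A^{\gamma})$ constant, \emph{not} any $t$-decay) the leftover $(t-\sigma)$-powers collapse to exactly $(1+t)^{-2}(1+A^2t^2)^{-1/2}$. The resolution is that the kernel genuinely carries $(t-\sigma)^{-2}(1+A^2(t-\sigma)^2)^{-1/2}$, the spatial integral only restores $(t-\sigma)^{3/2}(1+A^2(t-\sigma)^2)^{1/2}$ \emph{when one integrates the full Gaussians}, but here — exactly as in the proof of Lemma \ref{lemma2}, cf. \eqref{2.36} — for $t>1$ and $\sigma\le 1$ one does \emph{not} spend the decay of the $(t-\sigma)^{-2}$ factor on the spatial integral at all: one pulls $(t-\sigma)^{-2}(1+A^2(t-\sigma)^2)^{-1/2}\simeq (1+t)^{-2}(1+A^2t^2)^{-1/2}$ straight out and estimates the spatial integral of the product wave pattern by the \emph{bounded} merged pattern times $\big(\sigma(1+A^2\sigma^2)^{1/2}+1+A\sigma\big)(\sigma^{1/2}+1)$ (the version without the extra $t^{3/2}$, valid because $\mathscr W^x,\mathscr W^o$ are bounded by $1$), leaving $\int_{A^{-\theta}}^1 \sigma^{-1/2-\gamma/2}(1+A^2\sigma^2)^{-1/2+\gamma/2}(\sigma(1+A^2\sigma^2)^{1/2}+1+A\sigma)(\sigma^{1/2}+1)\,d\sigma = O(A^\gamma)$. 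This gives $\mathscr A_2(t)=A^\gamma(1+t)^{-2}(1+A^2t^2)^{-1/2}$ precisely, and the intermediate region is handled by the split at $\sigma=t/2$ described above. Once the correct choice of "which factors to pull out" is fixed in each region, the rest is the same kind of elementary power-counting already carried out in Lemma \ref{lemma2}.
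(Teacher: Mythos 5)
Your treatment of the intermediate region $A^{-\theta}<t\le 1$ has a genuine gap, and it starts at the very first reduction. Lemmas \ref{lemmaz}--\ref{lemma0} bound the spatial integral of $\mathbb{W}(\cdot,t-\sigma)\cdot\mathscr{W}(\cdot,\sigma)$ by the \emph{minimum} of a $(t-\sigma)$-type gain and a $\sigma$-type gain (times the merged pattern); they do not give the \emph{product} $C(t-\sigma)^{3/2}\left(1+A^2(t-\sigma)^2\right)^{1/2}\bigl(\sigma^{1/2}(1+A^2\sigma^2)^{1/2}+1+A\sigma\bigr)$ that you use, and indeed the product is not an upper bound when $t<1$: already for the 1-D model $\int_{\mathbb{R}}e^{-(x-x_0)^2/(t-\sigma)}e^{-x_0^2/\sigma}dx_0\simeq\sqrt{(t-\sigma)\sigma/t}\,e^{-x^2/t}$, the true size exceeds $\sqrt{(t-\sigma)}\cdot\sqrt{\sigma}$ by the factor $t^{-1/2}$, which is as large as $A^{\theta/2}$ on the range you need. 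Worse, even granting your scalar reduction, the inequality you then claim fails: at $t\sim 1$ the piece you compute is of size $t^{1/2-\gamma/2}(1+A^2t^2)^{\gamma/2}\sim A^{\gamma}$ (e.g. the contribution of $\sigma\in[1/2,3/4]$ to your scalar integral is already $\gtrsim A^{\gamma}$), while the target $\mathscr{A}_2(t)=t^{-1/4-\gamma/4}(1+A^2t^2)^{-1/4+\gamma/4}\sim A^{-1/2+\gamma/2}$, so you miss by a factor $\sim A^{1/2+\gamma/2}$. Your verification ("$3/4-\gamma/4+3\gamma/2-1/2>0$") only checks the sign of a combined $t$-exponent; since $A\gg1$ and $t$ ranges up to $1$, a positive power of $At$ is not bounded, and in addition the exponent of $(1+A^2t^2)$ in the ratio is $1/4+\gamma/4$, not $3\gamma/4-1/4$. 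The structural reason for the failure is that by cancelling $(1+A^2(t-\sigma)^2)^{-1/2}$ completely against the spatial gain and simultaneously spending the $\sigma$-weight's $A$-decay on the source bracket, you retain no mechanism to produce the required factor $(1+A^2t^2)^{-1/4+\gamma/4}$. The paper's proof avoids this by \emph{interpolating} the two admissible spatial bounds with exponents $(1+\gamma)/2$ and $(1-\gamma)/2$ (see \eqref{above}), which is legitimate since $\min\{a,b\}\le a^{(1+\gamma)/2}b^{(1-\gamma)/2}$; this keeps a fraction $(1+A^2(t-\sigma)^2)^{-1/4+\gamma/4}$ of the kernel's enhanced-dissipation decay, and the split at $\sigma=t/2$ then yields exactly $t^{-1/4-\gamma/4}(1+A^2t^2)^{-1/4+\gamma/4}$.

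Your long-time argument is essentially the paper's \eqref{2.37}: pull out $(t-\sigma)^{-2}(1+A^2(t-\sigma)^2)^{-1/2}\simeq(1+t)^{-2}(1+A^2t^2)^{-1/2}$ and bound the spatial integral by the $\sigma$-type gain alone, leaving a $\sigma$-integral of size $O(A^{\gamma})$; that part is sound for $t\ge 2$. But the step $t-\sigma\simeq t$ for all $\sigma\le 1$ fails when $1<t\le 2$ and $\sigma$ is close to $t$, where $(t-\sigma)^{-1/2}$ must be integrated separately; the paper treats $1<t\le 2$ with the comparison \eqref{c12} and a split at $\sigma=t/2$. That omission is fixable, but the intermediate-region argument needs to be replaced by (or rebuilt around) the interpolation device above.
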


\begin{proof}
It is obvious that \eqref{nn2} holds for $t\in(0,A^{-\theta}]$ since the upper and lower limits of the integral on the LHS of \eqref{nn2} are the same and the integral equals 0.

\vskip .05in

If $t\in(A^{-\theta},1]$, one has that $0<\sigma\le t\le 1$ and it combined with Lemma \ref{lemmaz}-\ref{lemma0} yields that
\begin{multline}\nonumber
\begin{aligned}
&\iiint_{\mathbb{R}^3}\mathbb{W}(x-x_0,y,z-z_0,t-\sigma;y_0;C_1,C_1,C_1;A)\cdot \mathscr{W}(x_0,y_0,z_0,\sigma;C',C'',C'';A)dx_0dy_0dz_0\\
\le& C(t-\sigma)^{3/2}\left(1+A^2(t-\sigma)^2\right)^{1/2}\mathscr{W}(x,y,z,t;\frac 32C',\frac 32C'',\frac 32C'';A),
\end{aligned}
\end{multline}
and
\begin{multline}\nonumber
\begin{aligned}
&\iiint_{\mathbb{R}^3}\mathbb{W}(x-x_0,y,z-z_0,t-\sigma;y_0;C_1,C_1,C_1;A)\cdot \mathscr{W}(x_0,y_0,z_0,\sigma;C',C'',C'';A)dx_0dy_0dz_0\\
\le& C(t-\sigma)\left(\sigma^{1/2}\left(1+A^2\sigma^2\right)^{1/2}+
\left(1+A\sigma\right)\right)\mathscr{W}(x,y,z,t;\frac 32C',\frac 32C'',\frac 32C'';A)\\
\le& C(t-\sigma)\left(1+A^2\sigma^2\right)^{1/2}\mathscr{W}(x,y,z,t;\frac 32C',\frac 32C'',\frac 32C'';A),
\end{aligned}
\end{multline}
and thus
\begin{multline}\label{above}
\begin{aligned}
&\iiint_{\mathbb{R}^3}\mathbb{W}(x-x_0,y,z-z_0,t-\sigma;y_0;C_1,C_1,C_1;A)\cdot \mathscr{W}(x_0,y_0,z_0,\sigma;C',C'',C'';A)dx_0dy_0dz_0\\
\le &C(t-\sigma)\mathscr{W}(x,y,z,t;\frac 32C',\frac 32C'',\frac 32C'';A)\cdot
\Big((t-\sigma)^{1/2\cdot(1+\gamma)/2}\left(1+A^2(t-\sigma)\right)^{1/2\cdot(1+\gamma)/2}\cdot\left(1+A^2\sigma^2\right)^{1/2\cdot (1-\gamma)/2}\Big).
\end{aligned}
\end{multline}
The inequality \eqref{above} results in that
\begin{multline}\nonumber
\begin{aligned}
&\int_{A^{-\theta}}^t \iiint_{\mathbb{R}^3}(t-\sigma)^{-2}\left(1+\frac{1}{12}A^2 (t-\sigma)^2\right)^{-1/2}\sigma^{-1/2-\gamma/2}
\left(1+A^2\sigma^2\right)^{-1/2+\gamma/2}
\\[2mm]
&\cdot \mathbb{W}(x-x_0,y,z-z_0,t-\sigma;y_0;C_1,C_1,C_1;A)\cdot \mathscr{W}(x_0,y_0,z_0,\sigma;C',C'',C'';A)dx_0dy_0dz_0d\sigma\\
\le& C\mathscr{W}(x,y,z,t;\frac 32C',\frac 32C'',\frac 32C'';A)\\
&\cdot\left(\left(\int_{A^{-\theta}}^{\max\{A^{-\theta},t/2\}}+\int_{\max\{A^{-\theta},t/2\}}^t\right) (t-\sigma)^{-3/4+\gamma/4}\left(1+A^2 (t-\sigma)^2\right)^{-1/4+\gamma/4}\sigma^{-1/2-\gamma/2}
\left(1+A^2\sigma^2\right)^{-1/4+\gamma/4}d\sigma\right)\\
\le& C t^{-1/4-\gamma/4}\left(1+A^2t^2\right)^{-1/4+\gamma/4}
\mathscr{W}(x,y,z,t;\frac 32C',\frac 32C'',\frac 32C'';A)
\end{aligned}
\end{multline}
and verifies \eqref{nn2} for $t\in(A^{-\theta},1]$.

\vskip .05in

If $t>2$ and thus $t-\sigma>t/2>(1+t)/4$ for $\sigma\le 1$, one has that
\begin{multline}\label{2.37}
\begin{aligned}
&\int_{A^{-\theta}}^1 \iiint_{\mathbb{R}^3}(t-\sigma)^{-2}\left(1+\frac{1}{12}A^2 (t-\sigma)^2\right)^{-1/2}\sigma^{-1/2-\gamma/2}
\left(1+A^2\sigma^2\right)^{-1/2+\gamma/2}\\[2mm]
&\cdot \mathbb{W}(x-x_0,y,z-z_0,t-\sigma;y_0;C_1,C_1,C_1;A)
\cdot \mathscr{W}(x_0,y_0,z_0,\sigma;C',C'',C'';A)dx_0dy_0dz_0d\sigma\\
\le& C\mathscr{W}(x,y,z,t;\frac 32C',\frac 32C'',\frac 32C'';A)\int_{A^{-\theta}}^1 (1+t)^{-2}\left(1+A^2t^2\right)^{-1/2}\sigma^{-1/2-\gamma/2} \left(1+A^2\sigma^2\right)^{\gamma/2}d\sigma\\
\le& C A^{\gamma} (1+t)^{-2}\left(1+A^2t^2\right)^{-1/2}
\cdot \mathscr{W}(x,y,z,t;\frac 32C',\frac 32C'',\frac 32C'';A).
\end{aligned}
\end{multline}
If $1<t \le 2$, there exist positive constants $\bar{C}_1$ and $\bar{C}_2$ such that
\begin{equation}\label{c12}
1/t,1/(1+t)\in(\bar{C}_1,\bar{C}_2),\ \ \ \ \ \ \ \ \bar{C}_1A^2<1+A^2t^2\le \bar{C}_2A^2.
\end{equation}
Thus, one has that
\begin{multline}\nonumber
\begin{aligned}
&\int_{A^{-\theta}}^1 \iiint_{\mathbb{R}^3}(t-\sigma)^{-2}\left(1+\frac{1}{12}A^2 (t-\sigma)^2\right)^{-1/2}\sigma^{-1/2-\gamma/2}
\left(1+A^2\sigma^2\right)^{-1/2+\gamma/2}\\[2mm]
&\cdot \mathbb{W}(x-x_0,y,z-z_0,t-\sigma;y_0;C_1,C_1,C_1;A)
\cdot \mathscr{W}(x_0,y_0,z_0,\sigma;C',C'',C'';A)dx_0dy_0dz_0d\sigma\\
\le& C\left(\int_{A^{-\theta}}^{t/2} t^{-2}\left(1+A^2t^2\right)^{-1/2}\sigma^{-1/2-\gamma/2} \left(1+A^2\sigma^2\right)^{\gamma/2}d\sigma\right.\\
&\left.+\int_{t/2}^1 (t-\sigma)^{-1/2}t^{-1/2-\gamma/2}
\left(1+A^2t^2\right)^{-1/2+\gamma/2}d\sigma\right)
\cdot \mathscr{W}(x,y,z,t;\frac 32C',\frac 32C'',\frac 32C'';A)\\
\le& C\left(A^{\gamma}(1+t)^{-2}
\left(1+A^2t^2\right)^{-1/2}+(1+t)^{-1/2-\gamma/2}\left(1+A^2t^2\right)^{-1/2+\gamma/2}\right)
\cdot \mathscr{W}(x,y,z,t;\frac 32C',\frac 32C'',\frac 32C'';A)\\
\le& C A^{\gamma}(1+t)^{-2}
\left(1+A^2t^2\right)^{-1/2}\cdot \mathscr{W}(x,y,z,t;\frac 32C',\frac 32C'',\frac 32C'';A)
\end{aligned}
\end{multline}
and it together with \eqref{2.37} verifies \eqref{nn2} for $t>1$.
\end{proof}

\begin{lemma}\label{lemma4} For positive constants $\gamma\in(0,1/2], C_1>1$ and $C',C''$ satisfying the requirements of Lemma \ref{lemmaz}-\ref{lemma0} and $t>1$, it holds that
\begin{multline}\label{nn3}
\begin{aligned}
&\int_{1}^t \iiint_{\mathbb{R}^3}(t-\sigma)^{-2}\left(1+\frac{1}{12}A^2 (t-\sigma)^2\right)^{-1/2}
(1+\sigma)^{-3}\left(1+A^2\sigma^2\right)^{-1+\gamma}
\\[2mm]
&\cdot \mathbb{W}(x-x_0,y,z-z_0,t-\sigma;y_0;C_1,C_1,C_1;A)
\cdot \mathscr{W}(x_0,y_0,z_0,\sigma;C',C'',C'';A)dx_0dy_0dz_0d\sigma\\
=&O(1)(1+t)^{-2} \left(1+A^2t^2\right)^{-1/2}\mathscr{W}(x,y,z,t;\frac 32C',\frac 32C'',\frac 32C'';A).
\end{aligned}
\end{multline}
\end{lemma}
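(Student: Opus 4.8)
The strategy mirrors the proofs of Lemmas \ref{lemma2} and \ref{lemma3}: first collapse the spatial integral using the wave-interaction estimates of Lemmas \ref{lemmaz}--\ref{lemma0} in the appendix, then reduce to a one-dimensional time integral and split the range $[1,t]$ according to whether $\sigma$ is small compared with $t$. Since $\sigma\ge 1$ throughout, the ``outer'' regime of the kernels applies, and the spatial convolution $\iiint \mathbb{W}(x-x_0,y,z-z_0,t-\sigma;y_0;C_1,C_1,C_1;A)\cdot\mathscr{W}(x_0,y_0,z_0,\sigma;C',C'',C'';A)\,dx_0dy_0dz_0$ is bounded (as in the displays following \eqref{above}, but now with $t-\sigma\ge 1$) by $C(t-\sigma)\bigl(1+A^2(t-\sigma)^2\bigr)^{1/2}\cdot\mathscr{W}(x,y,z,t;\tfrac32C',\tfrac32C'',\tfrac32C'';A)$ — in fact for $t-\sigma\ge 1$ one just needs the cruder bound $C\,(t-\sigma)\bigl(1+A(t-\sigma)\bigr)\cdot\mathscr{W}$. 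Plugging this in, the left side of \eqref{nn3} is controlled by $C\,\mathscr{W}(x,y,z,t;\tfrac32C',\tfrac32C'',\tfrac32C'';A)$ times the scalar integral
\[
I(t)\equiv\int_1^t (t-\sigma)^{-1}\bigl(1+A^2(t-\sigma)^2\bigr)^{-1/4}\,(1+\sigma)^{-3}\bigl(1+A^2\sigma^2\bigr)^{-1+\gamma}\,d\sigma,
\]
and it remains to show $I(t)\le C(1+t)^{-2}\bigl(1+A^2t^2\bigr)^{-1/2}$ for $t>1$.

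For the scalar estimate I would split at $\sigma=t/2$. On $[1,t/2]$ one has $t-\sigma\ge t/2\sim 1+t$, so the $(t-\sigma)$-factors are $\le C(1+t)^{-1}(1+A^2t^2)^{-1/4}$ and come out of the integral; the remaining $\int_1^{t/2}(1+\sigma)^{-3}(1+A^2\sigma^2)^{-1+\gamma}\,d\sigma$ converges (the exponent of $\sigma$ is $-3-2+2\gamma<-1$ since $\gamma\le 1/2$) and is bounded by $C A^{-2+2\gamma}$, so this piece is $\le C(1+t)^{-1}(1+A^2t^2)^{-1/4}A^{-2+2\gamma}$, which is much smaller than the claimed bound $C(1+t)^{-2}(1+A^2t^2)^{-1/2}$ because $A\gg1$ and $t>1$ force $(1+A^2t^2)^{-1/4}A^{-2+2\gamma}\le C(1+t)^{-1}(1+A^2t^2)^{-1/2}$ (indeed $(1+A^2t^2)^{1/4}\le CA^{1/2}t^{1/2}$ and $A^{-2+2\gamma}\cdot A^{1/2}t^{1/2}\le CA^{-1}t^{1/2}\ll t^{-1}$ once $t<A$, while for $t\ge A$ one uses $(1+A^2t^2)^{1/4}\le Ct^{1/2}$). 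On $[t/2,t]$ the roles reverse: now $\sigma\sim 1+t$, so $(1+\sigma)^{-3}(1+A^2\sigma^2)^{-1+\gamma}\le C(1+t)^{-3}(1+A^2t^2)^{-1+\gamma}$ factors out, and $\int_{t/2}^t (t-\sigma)^{-1}(1+A^2(t-\sigma)^2)^{-1/4}\,d\sigma$ is a convergent integral (near $\sigma=t$ the integrand behaves like $(t-\sigma)^{-1}$ but is cut off at scale $A^{-1}$, giving a harmless $\log A$ which I will absorb by slightly enlarging the wave-pattern constants, or by using a $(t-\sigma)^{-1+\delta}$ bound from a marginally stronger appendix estimate) and is bounded by $C\log(2+At)\le CA^{\gamma}$, say; hence this piece is $\le C(1+t)^{-3}(1+A^2t^2)^{-1+\gamma}A^{\gamma}\le C(1+t)^{-2}(1+A^2t^2)^{-1/2}$, where the last inequality again uses $t>1$ and $A\gg1$ together with $\gamma\le1/2$.

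The main obstacle is the borderline logarithmic divergence of $\int_{t/2}^t(t-\sigma)^{-1}(1+A^2(t-\sigma)^2)^{-1/4}\,d\sigma$ near the endpoint $\sigma=t$: the naive spatial bound produces a factor $(t-\sigma)^{-1}$ that is exactly non-integrable, and one needs to exploit the regularizing scale $A^{-1}$ (the integrand is really of size $A^{-1/2}(t-\sigma)^{-3/2}$ for $t-\sigma\gtrsim A^{-1}$ and bounded for $t-\sigma\lesssim A^{-1}$, so the integral is $O(\log(2+At))$, not $O(1)$). I expect to handle this exactly as in Lemmas \ref{lemma2}--\ref{lemma3}: keep the $(1+A^2(t-\sigma)^2)^{-1/4}$ factor, estimate the singular time integral by $C\log(2+A)$, and observe that this logarithm is dominated by the gain $A^{-(1-\theta)\gamma}$ or by any positive power of $A$ already present — or, more cleanly, absorb it by replacing the exponent $-2$ in the kernel $(t-\sigma)^{-2}$ estimate by $-2+\delta$ for small $\delta>0$ coming from a marginally sharper version of the appendix interaction lemmas, which trades the $\log$ for a convergent $\int(t-\sigma)^{-1+\delta}$. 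Either route yields \eqref{nn3}; the remaining verifications are the routine power-counting inequalities in $A$ and $t$ already used repeatedly above, which I will carry out directly.
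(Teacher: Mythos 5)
Your split of $[1,t]$ at $\sigma=t/2$ matches the paper, but the way you collapse the spatial integral breaks the estimate in \emph{both} regions, and the two specific rescues you invoke do not work. On $[t/2,t]$: after multiplying the kernel factor $(t-\sigma)^{-2}\bigl(1+\tfrac1{12}A^2(t-\sigma)^2\bigr)^{-1/2}$ by your spatial bound you are left with essentially $(t-\sigma)^{-1}$ near $\sigma=t$, and your claim that the factor $\bigl(1+A^2(t-\sigma)^2\bigr)^{-1/4}$ regularizes this at scale $A^{-1}$ is false: that factor tends to $1$ as $\sigma\to t$, so the integrand is of size $(t-\sigma)^{-1}$ there and $\int_{t/2}^{t}(t-\sigma)^{-1}\bigl(1+A^2(t-\sigma)^2\bigr)^{-1/4}d\sigma$ is genuinely divergent, not $O(\log(2+At))$. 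Moreover no ``marginally sharper'' appendix lemma is needed: you simply dropped the $z$-direction gain. Combining Lemma \ref{lemmaz} (an extra $(t-\sigma)^{1/2}$) with \eqref{nnw} gives the kernel-side spatial bound $C(t-\sigma)^{3/2}\bigl(1+A^2(t-\sigma)^2\bigr)^{1/2}\mathscr{W}$, so the reduced time integrand on $[t/2,t]$ is $C(t-\sigma)^{-1/2}$, integrable with no logarithm; this is exactly how the paper treats that region, paying only $(1+t)^{-3}\bigl(1+A^2t^2\bigr)^{-1+\gamma}$ from the source factors, which suffices since $\gamma\le 1/2$.

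On $[1,t/2]$ the problem is structural rather than a missing logarithm: by using the kernel-side bound there as well, you trade the kernel's decay $(t-\sigma)^{-2}\bigl(1+A^2(t-\sigma)^2\bigr)^{-1/2}\sim(1+t)^{-2}\bigl(1+A^2t^2\bigr)^{-1/2}$ for the weaker $(1+t)^{-1}\bigl(1+A^2t^2\bigr)^{-1/4}$, and then try to recover the deficit from the $t$-independent constant $\int_1^{t/2}(1+\sigma)^{-3}\bigl(1+A^2\sigma^2\bigr)^{-1+\gamma}d\sigma\le CA^{-2+2\gamma}$. That cannot work uniformly in $t$: the required inequality $A^{-2+2\gamma}\bigl(1+A^2t^2\bigr)^{1/4}(1+t)\le C$ fails once $t$ is large relative to a power of $A$, and your supporting steps (``$A^{-1}t^{1/2}\ll t^{-1}$ once $t<A$'' and ``$\bigl(1+A^2t^2\bigr)^{1/4}\le Ct^{1/2}$ for $t\ge A$'') are both incorrect. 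The correct move, and the one the paper makes, is to use the \emph{source-side} branch of the minimum on $[1,t/2]$: integrate $\mathscr{W}(x_0,y_0,z_0,\sigma;\cdot)$ to get the factor $(1+\sigma)^{3/2}\bigl(1+A^2\sigma^2\bigr)^{1/2}$, keep the kernel factor intact as $(1+t)^{-2}\bigl(1+A^2t^2\bigr)^{-1/2}$ (legitimate since $t-\sigma\ge t/2$), and observe that $\int_1^{t/2}(1+\sigma)^{-3/2}\bigl(1+A^2\sigma^2\bigr)^{-1/2+\gamma}d\sigma\le CA^{-1+2\gamma}\le C$. In short, the whole point of the $\min$ in Lemmas \ref{lemmaz}--\ref{lemma0} is to switch branches across $\sigma=t/2$; using the kernel-side branch everywhere, and in a weakened form, is what produces both your spurious divergence and your failed power counting.
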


\begin{proof}
Lemma \ref{lemmaz} and Lemma \ref{lemma0} yield that
\begin{multline}\label{240}
\begin{aligned}
&\int_{1}^t \iiint_{\mathbb{R}^3}(t-\sigma)^{-2}\left(1+\frac{1}{12}A^2 (t-\sigma)^2\right)^{-1/2}
(1+\sigma)^{-3}\left(1+A^2\sigma^2\right)^{-1+\gamma}
\\[2mm]
&\cdot \mathbb{W}(x-x_0,y,z-z_0,t-\sigma;y_0;C_1,C_1,C_1;A)
\cdot \mathscr{W}(x_0,y_0,z_0,\sigma;C',C'',C'';A)dx_0dy_0dz_0d\sigma\\[2mm]
=&\left(\int_1^{\max\{1,t/2\}}+\int_{\max\{1,t/2\}}^t\right) \iiint_{\mathbb{R}^3}(t-\sigma)^{-2}\left(1+\frac{1}{12}A^2 (t-\sigma)^2\right)^{-1/2}
(1+\sigma)^{-3}\left(1+A^2\sigma^2\right)^{-1+\gamma}
\\[2mm]
&\cdot \mathbb{W}(x-x_0,y,z-z_0,t-\sigma;y_0;C_1,C_1,C_1;A)
\cdot \mathscr{W}(x_0,y_0,z_0,\sigma;C',C'',C'';A)dx_0dy_0dz_0d\sigma\\[2mm]
=&O(1)\left(\int_1^{\max\{1,t/2\}}(1+\sigma)^{-3/2}\left(1+A^2\sigma^2\right)^{-1/2+\gamma}d\sigma \right)
\cdot (1+t)^{-2}\left(1+A^2t^2\right)^{-1/2}
\mathscr{W}(x,y,z,t;\frac 32C',\frac 32C'',\frac 32C'';A)\\
&+O(1)\left(\int_{\max\{1,t/2\}}^t(t-\sigma)^{-1/2}d\sigma\right)\cdot (1+t)^{-3}\left(1+A^2t^2\right)^{-1+\gamma}
\mathscr{W}(x,y,z,t;\frac 32C',\frac 32C'',\frac 32C'';A)\\
=&O(1)A^{-1+2\gamma}(1+t)^{-2} \left(1+A^2t^2\right)^{-1/2}
\mathscr{W}(x,y,z,t;\frac 32C',\frac 32C'',\frac 32C'';A).
\end{aligned}
\end{multline}
Here, one uses the assumptions $\gamma\in(0,1/2]$ and $t\ge\sigma\ge1$ to ensure that
$$\left(1+A^2t^2\right)^{-1/2+\gamma}\le \left(1+A^2\sigma^2\right)^{-1/2+\gamma}\le A^{-1+2\gamma}\le 1,\ \ \ \ \ \ \ \ t^{-3/2}\le 2\sqrt{2}\left(1+t\right)^{-3/2}.$$
\end{proof}

\subsection{Computational Lemmas: Tools for Pointwise Estimates of $c(x,y,z,t)$}
The following lemmas are prepared to estimate $c(x,y,z,t)$ based on its representations \eqref{c1} and \eqref{c0}, space-time structure of Green's functions $\mathbb{G}_{c,1}(x,y,z,t;y_0), \mathbb{G}_{c,0}(x,y,z)$ and ansatz assumption of $n(x,y,z,t)$:
\begin{lemma}\label{cp1}
\textbf{(Parabolic-Parabolic case.)} For positive constants $\alpha\in[3/2,2], \gamma\in(0,1), C_1>1$ and $C'_1,C''_1$ satisfying the requirements for $C',C''$ in Lemma \ref{lemmaz}-\ref{lemma0}, it holds that
\begin{multline}\label{time3}
\begin{aligned}
&\int_{\min\left\{1, t\right\}}^t \iiint_{\mathbb{R}^3}e^{-t+\sigma}(t-\sigma)^{-\alpha}\left(1+A^2 (t-\sigma)^2\right)^{-1/2}
(1+\sigma)^{-3/2}\left(1+A^2\sigma^2\right)^{-1/2+\gamma/2}
\\[2mm]
&\cdot \mathbb{W}(x-x_0,y,z-z_0,t-\sigma;y_0;C_1,C_1,C_1;A)
\cdot \mathscr{W}(x_0,y_0,z_0,\sigma;C'_1,C''_1,C''_1;A)dx_0dy_0dz_0d\sigma\\
=&O(1)\left(e^{-t/2}(1+t)^{-\alpha+1}+(1+t)^{-3/2}\right) \left(1+A^2t^2\right)^{-1/2+\gamma/2}
\mathscr{W}(x,y,z,t;\frac 32C'_1,\frac 32C''_1,\frac 32C''_1;A),
\end{aligned}
\end{multline}
and
\begin{multline}\label{time2}
\begin{aligned}
&\int_{\min\left\{A^{-\theta},t\right\}}^{\min\left\{1, t\right\}} \iiint_{\mathbb{R}^3}e^{-t+\sigma}(t-\sigma)^{-\alpha}\left(1+A^2 (t-\sigma)^2\right)^{-1/2}\sigma^{-1/4-\gamma/4}\left(1+A^2\sigma^2\right)^{-1/4+\gamma/4}
\\[2mm]
&\cdot \mathbb{W}(x-x_0,y,z-z_0,t-\sigma;y_0;C_1,C_1,C_1;A)
\cdot \mathscr{W}(x_0,y_0,z_0,\sigma;C'_1,C''_1,C''_1;A)dx_0dy_0dz_0d\sigma\\
=&O(1)\mathscr{A}_3(t;\alpha)\mathscr{W}(x,y,z,t;\frac 32C'_1,\frac 32C''_1,\frac 32C''_1;A)
\end{aligned}
\end{multline}
with $\mathscr{A}_3(t;\alpha)$ defined by
\begin{equation}\label{aa3}
\mathscr{A}_3(t;\alpha)\equiv \begin{cases}
0,\ \ \ \ &\text{ for }\ 0< t\le A^{-\theta},\\
t^{-1/4-\gamma/4}\left(1+A^2t^2\right)^{-1/4+\gamma/4},\ \ \ \ &\text{ for }\ A^{-\theta}< t\le 1,\\
A^{1/2+\gamma/2}e^{-t/2}(1+t)^{-\alpha}\left(1+A^2t^2\right)^{-1/2},\ \ \ \ &\text{ for }\ t>1.
\end{cases}
\end{equation}
\end{lemma}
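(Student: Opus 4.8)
\textbf{Proof plan for Lemma \ref{cp1}.} The structure is parallel to the proofs of Lemmas \ref{lemma2}--\ref{lemma4}: first use the wave-interaction estimates of Lemmas \ref{lemmaz}--\ref{lemma0} to carry out the spatial integral $\iiint_{\mathbb{R}^3}\mathbb{W}(\cdots;t-\sigma)\cdot\mathscr{W}(\cdots;\sigma)\,dx_0dy_0dz_0$, producing a constant times the single wave pattern $\mathscr{W}(x,y,z,t;\tfrac32C'_1,\tfrac32C''_1,\tfrac32C''_1;A)$ multiplied by a power of $t-\sigma$ (and of $1+A^2(t-\sigma)^2$) coming from the ``width'' of the kernel; then one is left with a purely temporal integral to estimate in each of the three time regimes. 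The new feature relative to Lemmas \ref{lemma2}--\ref{lemma4} is the exponential damping factor $e^{-t+\sigma}$ inherited from $\mathbb{G}_{c,1}=e^{-t}\mathbb{G}$ via \eqref{gc}, which only helps and in fact is what produces the $e^{-t/2}$ terms in the target bounds. For \eqref{time3} the source carries $(1+\sigma)^{-3/2}(1+A^2\sigma^2)^{-1/2+\gamma/2}$ (the $t>1$ ansatz for $n$), and since $\sigma\ge1$ here one bounds $(1+A^2\sigma^2)^{-1/2+\gamma/2}\le (1+A^2t^2)^{-1/2+\gamma/2}$ after splitting at $\sigma=t/2$, just as in \eqref{240}; for \eqref{time2} the source carries $\sigma^{-1/4-\gamma/4}(1+A^2\sigma^2)^{-1/4+\gamma/4}$ (the $A^{-\theta}<t\le1$ ansatz for $c$, equivalently $A^{1/2-\gamma/2}\chi\cdot\mathscr A$), and $0<\sigma<1$.

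For \eqref{time3} I would split $\int_{\min\{1,t\}}^t=\int_1^{\max\{1,t/2\}}+\int_{\max\{1,t/2\}}^t$ (the first piece empty when $t\le2$). On $[\max\{1,t/2\},t]$ one has $1+\sigma\sim1+t$, $1+A^2\sigma^2\sim1+A^2t^2$ and $e^{-t+\sigma}\le1$, so pulling out $(1+t)^{-3/2}(1+A^2t^2)^{-1/2+\gamma/2}$ leaves $\int_{t/2}^t e^{-t+\sigma}(t-\sigma)^{-\alpha}(1+A^2(t-\sigma)^2)^{-1/2}(t-\sigma)^{?}\,d\sigma$; the spatial integration supplies enough positive powers of $t-\sigma$ (namely $(t-\sigma)^{3/2}(1+A^2(t-\sigma)^2)^{1/2}$ when $t-\sigma\le1$ and comparable growth when $t-\sigma\ge1$, by Lemmas \ref{lemmaz}--\ref{lemma0}) to kill the singularity since $\alpha\le2$, and with the $e^{-t+\sigma}$ factor the integral is $O(1)$; this yields the $(1+t)^{-3/2}(1+A^2t^2)^{-1/2+\gamma/2}\mathscr W$ term. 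On $[1,t/2]$ one has $t-\sigma\sim t$, so $e^{-t+\sigma}\le e^{-t/2}$, $(t-\sigma)^{-\alpha}\sim(1+t)^{-\alpha}$, $(1+A^2(t-\sigma)^2)^{-1/2}\sim(1+A^2t^2)^{-1/2}$, and $\int_1^{t/2}(1+\sigma)^{-3/2}(1+A^2\sigma^2)^{-1/2+\gamma/2}\,d\sigma=O(1)$ uniformly (the $\sigma$-integral converges; one absorbs $(1+A^2\sigma^2)^{-1/2+\gamma/2}\le1$), giving the $e^{-t/2}(1+t)^{-\alpha}(1+A^2t^2)^{-1/2}\mathscr W\le e^{-t/2}(1+t)^{-\alpha+1}(1+A^2t^2)^{-1/2+\gamma/2}\mathscr W$ contribution. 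For $0<t\le1$ there is nothing to prove in \eqref{time3} after the change of upper/lower limits only when $t>1$; when $t\le 1$ the lower limit is $t$ and the integral is empty, consistent with the RHS being $O(1)$.

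For \eqref{time2} the three regimes mirror \eqref{aa3}. When $0<t\le A^{-\theta}$ the integral is empty, giving $0$. When $A^{-\theta}<t\le1$ one has $0<\sigma\le t\le1$, $e^{-t+\sigma}\sim1$, $1+A^2(t-\sigma)^2$ and $1+A^2\sigma^2$ are both comparable to their ``full'' sizes only after the usual split at $\sigma=t/2$; after the spatial integration (which contributes $(t-\sigma)^{3/2}(1+A^2(t-\sigma)^2)^{1/2}$ or the $(t-\sigma)(\sigma^{1/2}(1+A^2\sigma^2)^{1/2}+1+A\sigma)$ bound, whichever is sharper, exactly as in \eqref{above}) one reduces to $\int_{A^{-\theta}}^t(t-\sigma)^{-\alpha+3/2}(1+A^2(t-\sigma)^2)^{0}\sigma^{-1/4-\gamma/4}(1+A^2\sigma^2)^{-1/4+\gamma/4}\,d\sigma$-type integrals, and since $\alpha\le2$ the $(t-\sigma)$-singularity is integrable; evaluating gives the claimed $t^{-1/4-\gamma/4}(1+A^2t^2)^{-1/4+\gamma/4}$. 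When $t>1$ split $\int_{A^{-\theta}}^1=\int_{A^{-\theta}}^{\min\{1,t/2\}}+\int_{\min\{1,t/2\}}^1$; on the lower part $t-\sigma\sim t$ so $e^{-t+\sigma}\le e^{-t/2}$, and $\int_{A^{-\theta}}^1\sigma^{-1/4-\gamma/4}(1+A^2\sigma^2)^{-1/4+\gamma/4}\,d\sigma=O(A^{1/2+\gamma/2})$ (the worst growth comes from $\sigma$ near $A^{-\theta}$ where $1+A^2\sigma^2\sim1$, giving $\int_{A^{-\theta}}\sigma^{-1/4-\gamma/4}\sim A^{-\theta(3/4-\gamma/4)}$, while near $\sigma=1$ one gets $A^{-1/2+\gamma/2}$; balancing, $A^{1/2+\gamma/2}$ dominates since $\theta>2/3$ forces $\theta(3-\gamma)/4>(1-\gamma)/2+\dots$), producing $A^{1/2+\gamma/2}e^{-t/2}(1+t)^{-\alpha}(1+A^2t^2)^{-1/2}\mathscr W$; the upper part (only for $1<t\le2$) is handled as in the $1<t\le2$ case of Lemma \ref{lemma3} using \eqref{c12}, and is dominated by the same expression.

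\textbf{Main obstacle.} The routine parts are the spatial integrations (quote Lemmas \ref{lemmaz}--\ref{lemma0}) and the large-$t$ exponential decay (the factor $e^{-t+\sigma}$ makes everything converge). The delicate point is the bookkeeping of powers of $A$ in the regime $t>1$ of \eqref{time2}: one must check that $\int_{A^{-\theta}}^1\sigma^{-1/4-\gamma/4}(1+A^2\sigma^2)^{-1/4+\gamma/4}\,d\sigma$ is really $O(A^{1/2+\gamma/2})$ and not worse, which hinges on comparing the contribution from $\sigma\lesssim A^{-1}$ (where the weight $(1+A^2\sigma^2)^{-1/4+\gamma/4}\approx1$ and the integral of $\sigma^{-1/4-\gamma/4}$ scales like $A^{-\theta\cdot(3-\gamma)/4}$) against the contribution from $A^{-1}\lesssim\sigma\le1$ (where $(1+A^2\sigma^2)^{-1/4+\gamma/4}\approx(A\sigma)^{-1/2+\gamma/2}$, giving $A^{-1/2+\gamma/2}\int_{A^{-1}}^1\sigma^{-3/4+\gamma/4}d\sigma=O(A^{-1/2+\gamma/2})$ since $-3/4+\gamma/4>-1$); the first is $\ll A^{1/2+\gamma/2}$ precisely because $\theta>2/3$, and this is where the constraint $\theta\in(2/3,1)$ is used. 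Matching the $t\le1$ and $t>1$ formulas at $t=1$ (and at $t=A^{-\theta}$) also requires a short consistency check, as in Lemma \ref{lemma3}.
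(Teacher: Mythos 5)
Your overall route is the same as the paper's (split the time integral at $\sigma=t/2$, use Lemmas \ref{lemmaz}--\ref{lemma0} for the spatial interaction, exploit $e^{-t+\sigma}$ for the late piece), but there is a genuine bookkeeping gap in the long-time regimes, and it sits exactly at the point you flag as the "delicate" one. When $\sigma\le 1\ll t-\sigma$ (the range $[A^{-\theta},1]$ for $t>2$, and likewise $[1,t/2]$ in \eqref{time3}), the spatial integral of $\mathbb{W}(\cdot,t-\sigma)\cdot\mathscr{W}(\cdot,\sigma)$ cannot be bounded by $O(1)\,\mathscr{W}(\cdot,t)$: by \eqref{nnw}--\eqref{nnw3} and Lemma \ref{lemmaz} it necessarily carries a $\sigma$-dependent factor of size $(1+A\sigma)(1+\sigma^{1/2})\sim(1+A^2\sigma^2)^{1/2}$ (the $x$-width of the source pattern), since the alternative $(t-\sigma)$-based factor $(t-\sigma)(1+A^2(t-\sigma)^2)^{1/2}$ is far too large there. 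Your temporal integrals omit this factor. Consequently your "main obstacle" integral $\int_{A^{-\theta}}^1\sigma^{-1/4-\gamma/4}(1+A^2\sigma^2)^{-1/4+\gamma/4}\,d\sigma$ is simply $O(1)$ (the exponent $-1/4-\gamma/4>-1$, no condition on $\theta$ is involved, and your claim that $\theta>2/3$ is what yields $O(A^{1/2+\gamma/2})$ is spurious); the true source of the $A^{1/2+\gamma/2}$ in \eqref{aa3} is the restored interaction factor, which turns the integral into $\int_{A^{-\theta}}^1\sigma^{-1/4-\gamma/4}(1+A^2\sigma^2)^{1/4+\gamma/4}\,d\sigma\le CA^{1/2+\gamma/2}$, exactly as in the paper's proof. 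Similarly, in \eqref{time3} your intermediate bound for the piece $\sigma\in[1,t/2]$, namely $e^{-t/2}(1+t)^{-\alpha}(1+A^2t^2)^{-1/2}\mathscr{W}$, is not valid uniformly in $A$ once the factor is restored: one gets $\int_1^{t/2}(1+A^2\sigma^2)^{\gamma/2}d\sigma\lesssim t(1+A^2t^2)^{\gamma/2}$ rather than $O(1)$, and this is precisely why the statement contains the weaker term $e^{-t/2}(1+t)^{-\alpha+1}(1+A^2t^2)^{-1/2+\gamma/2}$. Your final answers coincide with \eqref{time3}--\eqref{aa3} only because you relax to them at the last step; the derivation as written does not establish them.

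A second, smaller instance of the same issue is the middle regime $A^{-\theta}<t\le1$ of \eqref{time2}: the reduced integral you display, $\int(t-\sigma)^{-\alpha+3/2}\sigma^{-1/4-\gamma/4}(1+A^2\sigma^2)^{-1/4+\gamma/4}d\sigma$, does not give the claimed $t^{-1/4-\gamma/4}(1+A^2t^2)^{-1/4+\gamma/4}$ on the subrange $\sigma\le t/2$ (one loses up to a power of $A$); there you must use, as the paper does, the bound of type \eqref{above} with a single factor $t-\sigma$ together with $(1+A^2\sigma^2)^{1/2}$, and then the cancellation $t^{-\alpha+1}\cdot t^{3/4-\gamma/4}\cdot t^{1/4+\gamma/4}=t^{2-\alpha}\le C$ (using $\alpha\le 2$, $t\le1$) closes the estimate. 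You do mention the \eqref{above}-type alternative in passing, so the fix is within reach, but the proof needs the factor bookkeeping carried out consistently: track in every regime which entry of the $\min$ in Lemmas \ref{lemmaz}--\ref{lemma0} is being used, and keep the resulting $\sigma$- or $(t-\sigma)$-dependent factors inside the temporal integrals before estimating them.
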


\begin{proof}
The proof of this lemma is similar to the ones for Lemma \ref{lemma3}-\ref{lemma4}: If $t\le A^{-\theta}$, the integral in \eqref{time2} equals to 0. For $t\in(A^{-\theta},1]$, combining Lemma \ref{lemmaz}-\ref{lemma0} one has that
\begin{multline}\nonumber
\begin{aligned}
&\int_{A^{-\theta}}^t \iiint_{\mathbb{R}^3}e^{-t+\sigma}(t-\sigma)^{-\alpha}\left(1+\frac{1}{12}A^2 (t-\sigma)^2\right)^{-1/2}\sigma^{-1/4-\gamma/4}
\left(1+A^2\sigma^2\right)^{-1/4+\gamma/4}
\\[2mm]
&\cdot \mathbb{W}(x-x_0,y,z-z_0,t-\sigma;y_0;C_1,C_1,C_1;A)
\cdot \mathscr{W}(x_0,y_0,z_0,\sigma;C'_1,C''_1,C''_1;A)dx_0dy_0dz_0d\sigma\\
\le& C\mathscr{W}(x,y,z,t;\frac 32C'_1,\frac 32C''_1,\frac 32C''_1;A)
\cdot\left(\int_{A^{-\theta}}^{\max\{A^{-\theta},t/2\}} e^{-t/2}t^{-\alpha+1}\left(1+A^2 t^2\right)^{-1/2}\sigma^{-1/4-\gamma/4}
\left(1+A^2\sigma^2\right)^{1/4+\gamma/4}d\sigma\right.\\
&\left.+\int_{\max\{A^{-\theta},t/2\}}^t e^{-t+\sigma}(t-\sigma)^{-\alpha+3/2}t^{-1/4-\gamma/4}
\left(1+A^2t^2\right)^{-1/4+\gamma/4}d\sigma\right)\\
\le &C t^{-1/4-\gamma/4}\left(1+A^2t^2\right)^{-1/4+\gamma/4}
\mathscr{W}(x,y,z,t;\frac 32C'_1,\frac 32C''_1,\frac 32C''_1;A)
\end{aligned}
\end{multline}
and verifies \eqref{time2} for $t\in(A^{-\theta},1]$.

\vskip .05in

If $1<t \le 2$, from \eqref{c12} one has that
\begin{multline}\label{244}
\begin{aligned}
&\int_{A^{-\theta}}^1 \iiint_{\mathbb{R}^3} e^{-t+\sigma}(t-\sigma)^{-\alpha}\left(1+\frac{1}{12}A^2 (t-\sigma)^2\right)^{-1/2}\sigma^{-1/4-\gamma/4}
\left(1+A^2\sigma^2\right)^{-1/4+\gamma/4}\\[2mm]
&\cdot \mathbb{W}(x-x_0,y,z-z_0,t-\sigma;y_0;C_1,C_1,C_1;A)
\cdot \mathscr{W}(x_0,y_0,z_0,\sigma;C'_1,C''_1,C''_1;A)dx_0dy_0dz_0d\sigma\\
\le &C\left(\int_{A^{-\theta}}^{t/2}e^{-t/2} t^{-\alpha}\left(1+A^2t^2\right)^{-1/2}\sigma^{-1/4-\gamma/4} \left(1+A^2\sigma^2\right)^{1/4+\gamma/4}d\sigma\right.\\
&\left.+\int_{t/2}^1 e^{-t+\sigma}(t-\sigma)^{-\alpha+3/2}t^{-1/4-\gamma/4}
\left(1+A^2t^2\right)^{-1/4+\gamma/4}d\sigma\right)
\cdot \mathscr{W}(x,y,t;\frac 32C'_1,\frac 32C''_1)\\
\le& C\left(A^{1/2+\gamma/2}e^{-t/2}(1+t)^{-\alpha}
\left(1+A^2t^2\right)^{-1/2}+(1+t)^{-1/4-\gamma/4}\left(1+A^2t^2\right)^{-1/4+\gamma/4}\right)
\cdot \mathscr{W}(x,y,z,t;\frac 32C'_1,\frac 32C''_1,\frac 32C''_1;A)\\
\le& C A^{1/2+\gamma/2}e^{-t/2}(1+t)^{-\alpha}
\left(1+A^2t^2\right)^{-1/2}\cdot \mathscr{W}(x,y,z,t;\frac 32C'_1,\frac 32C''_1,\frac 32C''_1;A).
\end{aligned}
\end{multline}
If $t>2$ and $\sigma\le 1$, one has that $t-\sigma>t/2>(1+t)/4$ and thus
\begin{multline}\nonumber
\begin{aligned}
&\int_{A^{-\theta}}^1 \iiint_{\mathbb{R}^2}e^{-t+\sigma}(t-\sigma)^{-\alpha}\left(1+\frac{1}{12}A^2 (t-\sigma)^2\right)^{-1/2}\sigma^{-1/4-\gamma/4}
\left(1+A^2\sigma^2\right)^{-1/4+\gamma/4}\\[2mm]
&\cdot \mathbb{W}(x-x_0,y,z-z_0,t-\sigma;y_0;C_1,C_1,C_1;A)
\cdot \mathscr{W}(x_0,y_0,z_0,\sigma;C'_1,C''_1,C''_1;A)dx_0dy_0dz_0d\sigma\\
\le& C \mathscr{W}(x,y,z,t;\frac 32C'_1,\frac 32C''_1,\frac 32C''_1;A)\int_{A^{-\theta}}^1 e^{-t/2}(1+t)^{-\alpha}\left(1+A^2t^2\right)^{-1/2}\sigma^{-1/4-\gamma/4} \left(1+A^2\sigma^2\right)^{1/4+\gamma/4}d\sigma\\
\le& C A^{1/2+\gamma/2} e^{-t/2}(1+t)^{-\alpha}\left(1+A^2t^2\right)^{-1/2}
\cdot \mathscr{W}(x,y,z,t;\frac 32C'_1,\frac 32C''_1,\frac 32C''_1;A),
\end{aligned}
\end{multline}
since $\left(1+A^2\sigma^2\right)^{1/4+\gamma/4}\le C A^{1/2+\gamma/2}$ for $\sigma\le 1$. It together with \eqref{244} verifies \eqref{time2} for $t>1$.

For \eqref{time3}, one only needs to consider $t>1$ and similar to \eqref{240}, one has that
\begin{multline}\nonumber
\begin{aligned}
&\int_{1}^t \iiint_{\mathbb{R}^3}e^{-t+\sigma}(t-\sigma)^{-\alpha}\left(1+A^2 (t-\sigma)^2\right)^{-1/2}
(1+\sigma)^{-3/2}\left(1+A^2\sigma^2\right)^{-1/2+\gamma/2}
\\[2mm]
&\cdot \mathbb{W}(x-x_0,y,z-z_0,t-\sigma;y_0;C_1,C_1,C_1;A)
\cdot \mathscr{W}(x_0,y_0,z_0,\sigma;C'_1,C''_1,C''_1;A)dx_0dy_0dz_0d\sigma\\
=&O(1)\int_1^{\max\{1,t/2\}}\left(1+A^2\sigma^2\right)^{\gamma/2}d\sigma
\cdot e^{-t/2}(1+t)^{-\alpha}\left(1+A^2t^2\right)^{-1/2}
\mathscr{W}(x,y,z,t;\frac 32C'_1,\frac 32C''_1,\frac 32C''_1;A)\\
&+O(1)\int_{\max\{1,t/2\}}^t e^{-t+\sigma}(t-\sigma)^{-\alpha+3/2}d\sigma\cdot (1+t)^{-1}\left(1+A^2t^2\right)^{-1/2+\gamma/2}
\mathscr{W}(x,y,z,t;\frac 32C'_1,\frac 32C''_1,\frac 32C''_1;A)\\
\le& C\left(1+A^2 t^2\right)^{\gamma/2}
\cdot e^{-t/2}(1+t)^{-\alpha+1}\left(1+A^2t^2\right)^{-1/2}
\mathscr{W}(x,y,z,t;\frac 32C'_1,\frac 32C''_1,\frac 32C''_1;A)\\
&+C (1+t)^{-3/2}\left(1+A^2t^2\right)^{-1/2+\gamma/2}
\mathscr{W}(x,y,z,t;\frac 32C'_1,\frac 32C''_1,\frac 32C''_1;A)\\
=&O(1)\left(e^{-t/2}(1+t)^{-\alpha+1}+(1+t)^{-3/2}\right) \left(1+A^2t^2\right)^{-1/2+\gamma/2}
\mathscr{W}(x,y,z,t;\frac 32C'_1,\frac 32C''_1,\frac 32C''_1;A)
\end{aligned}
\end{multline}
which verifies \eqref{time3}.
\end{proof}

\begin{lemma}\label{cp2}
\textbf{(Parabolic-Elliptic case.)}
For constants $C'_1, C''_1\ge 60$, it holds that
\begin{multline}\label{2.39}
\begin{aligned}
&\iiint_{\mathbb{R}^3}\left(1+\frac{1}{\sqrt{(x-x_0)^2+(y-y_0)^2+(z-z_0)^2}} \right)\frac{e^{-\sqrt{(x-x_0)^2+(y-y_0)^2+(z-z_0)^2}}}{\sqrt{(x-x_0)^2+(y-y_0)^2+(z-z_0)^2}}\\[2mm] &\cdot\mathscr{W}(x_0,y_0,z_0,t;C'_1,C''_1,C''_1;A)dx_0dy_0dz_0\\
\le& C\mathscr{W}(x,y,z,t;\frac 32C'_1,\frac 32C''_1,\frac 32C''_1;A).
\end{aligned}
\end{multline}
\end{lemma}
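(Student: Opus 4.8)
\medskip

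The plan is to exploit the fact that the Yukawa kernel $K(x,y,z)\equiv\bigl(1+r^{-1}\bigr)r^{-1}e^{-r}$, with $r\equiv\sqrt{x^2+y^2+z^2}$, is integrable on $\mathbb{R}^3$ and decays like $e^{-r}$ at infinity; hence it behaves, up to a constant, like an $L^1$ approximate identity with exponential tails. The convolution $\int K(x-x_0,\cdots)\,\mathscr{W}(x_0,y_0,z_0,t;C'_1,C''_1,C''_1;A)\,dx_0dy_0dz_0$ should therefore be controlled by $\mathscr{W}$ with slightly enlarged constants, exactly as in the proof of Lemma \ref{lemma1} where one splits the spatial integral according to whether the "shifted" variable is larger or smaller than a fixed multiple of the integration variable. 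First I would record the elementary bound $K(x,y,z)\le C(1+r)r^{-1}e^{-r}\le C\,r^{-1}e^{-r/2}\cdot e^{-r/2}(1+r)\le C\,e^{-r/2}r^{-1}$ for $r$ bounded away from $0$ and $K\le C r^{-1}$ near $0$, so that $K$ is dominated by a fixed function that is $L^1_{loc}$ near the origin and exponentially decaying at infinity; in particular $\int_{\mathbb{R}^3}K\,dx_0dy_0dz_0\le C<\infty$.

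\medskip

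Next I would perform the standard region decomposition on the $(x_0,y_0,z_0)$-integral. Write $\mathscr{W}(x_0,y_0,z_0,t;C'_1,C''_1,C''_1;A)$ as the product of the three one-dimensional (or $(x,y)$-coupled) wave factors $\mathscr{W}^x(x_0,y_0,t;C'_1,C'_1;A)$, $\mathscr{W}^o(y_0,t;C''_1,C''_1)$, $\mathscr{W}^o(z_0,t;C''_1,C''_1)$. On each factor I split the integration domain according to whether the target argument (e.g.\ $|z-z_0|$, resp.\ $|y-y_0|$, resp.\ $|x-\tfrac{At}{2}y-(x_0-\tfrac{At}{2}y_0)|$) is at least, say, $3$ times the integration variable or not. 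On the "far" region the exponential in $K$ (which carries a factor $e^{-r/2}\ge e^{-(|x-x_0|+|y-y_0|+|z-z_0|)/2}$ up to constants) retains at least a definite fraction of the decay of $\mathscr{W}$ centered at the target point after the triangle inequality, and the remaining factor of $K$ is integrated out to a constant using $\int_{\mathbb{R}^3}K<\infty$; on the "near" region $\mathscr{W}$ itself is comparable to $\mathscr{W}$ evaluated at the target point (since the integration variable is small relative to the shift), and one is left with $\int_{\mathbb{R}^3}K<\infty$. In both regions the enlargement of the constants $C'_1,C''_1\mapsto \tfrac32 C'_1,\tfrac32 C''_1$ is precisely what absorbs the loss from the triangle inequality — this is why the hypothesis $C'_1,C''_1\ge 60$ is imposed, guaranteeing that the ratio of old to new constants is small enough that the leftover Gaussian/exponential weights coming from $K$ do not destroy the bound. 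The coupling in the $x$-variable through the shear term $\tfrac{At}{2}y$ is handled exactly as in \eqref{e2}: one changes variables $x_0\mapsto x_0-\tfrac{At}{2}y_0$ inside the $x$-factor, noting that the corresponding shift in the Yukawa argument is controlled because $|x-x_0|\le |x-\tfrac{At}{2}y - (x_0-\tfrac{At}{2}y_0)| + \tfrac{A t}{2}|y-y_0|$ and the excess $\tfrac{At}{2}|y-y_0|$ is reabsorbed by the $y$-Gaussian at the cost of, again, slightly enlarging $C''_1$.

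\medskip

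I expect the main obstacle to be the bookkeeping of the shear coupling: unlike a genuine three-dimensional convolution, here the kernel $K$ is a function of $(x-x_0,y-y_0,z-z_0)$ while $\mathscr{W}$ is naturally expressed in the sheared variable $x_0-\tfrac{At}{2}y_0$, so the Gaussian/exponential weights in $x$ and $y$ must be split and recombined carefully to make sure that after the change of variables the $y$-weight still dominates the extra $\tfrac{At}{2}|y-y_0|$ term \emph{and} the residual $y$-weight is still strong enough to produce $\mathscr{W}^o(y,t;\tfrac32 C''_1)$. Since $K$ decays exponentially in the \emph{unsheared} Euclidean distance, one cannot simply factor the convolution; one uses instead $e^{-r}\le e^{-|z-z_0|}e^{-|y-y_0|}e^{-|x-x_0|}$ (up to a constant from the equivalence of $\ell^1$ and $\ell^2$ norms) to restore a product structure before splitting. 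Once this product form is in hand, the argument reduces to three one-dimensional estimates of exactly the type already carried out in \eqref{e1}--\eqref{2.10}, and the conclusion \eqref{2.39} follows; the time variable $t$ plays no role here beyond labeling the widths $t$ of the Gaussian pieces, and the estimate is uniform in $t>0$ and in $A$.
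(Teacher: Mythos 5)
Your overall strategy is the same as the paper's: split $e^{-r}$ so that part of it stays with the singular factor (making the kernel integrable with no time factors produced) and part of it becomes one-dimensional exponential decay in $|x-x_0|,|y-y_0|,|z-z_0|$, then transfer the wave-pattern decay from $(x_0,y_0,z_0)$ to $(x,y,z)$ by a region decomposition with slightly enlarged constants. However, your treatment of the crucial shear step is not viable as written. You claim that, after $|x-x_0|\le \left|x-\frac{At}{2}y-\left(x_0-\frac{At}{2}y_0\right)\right|+\frac{At}{2}|y-y_0|$, "the excess $\frac{At}{2}|y-y_0|$ is reabsorbed by the $y$-Gaussian at the cost of slightly enlarging $C''_1$". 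For $At\gg 1$ this cannot work: an exponent of size $\frac{At}{2}|y-y_0|$ cannot be absorbed by any fixed enlargement of $C''_1$ (completing the square against $\exp\left(-(y-y_0)^2/(Ct)\right)$ costs a factor $e^{cA^2t^3}$, and a unit-scale exponential in $|y-y_0|$ absorbs nothing of that size). The correct mechanism, which the paper isolates in \eqref{expo1}, is that the kernel's unit-scale decay in $|x-x_0|$ and $|y-y_0|$ is converted into decay in the sheared difference only at the large scale $1+At$, namely into $\exp\left(-\left|x-\frac{At}{2}y-\left(x_0-\frac{At}{2}y_0\right)\right|/(6(1+At))\right)$, while a unit-scale factor $\exp\left(-|y-y_0|/6\right)$ is kept in reserve for the $y$-transfer; this loss of scale is harmless precisely because the exponential tail of the target $\mathscr{W}^x$ already carries $1+At$ in its denominator. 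Your plan never produces this $(1+At)$-scale decay, so the step as described fails; once it is inserted, the rest of your argument coincides with the paper's (the four-region comparison of $\left|x-\frac{At}{2}y\right|$ with $\left|x_0-\frac{At}{2}y_0\right|$ and of $|y|$ with $|y_0|$, plus the simpler splitting in $z$).

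Two smaller quantitative points. Near the origin the kernel behaves like $r^{-2}$, not $r^{-1}$ (still locally integrable in $\mathbb{R}^3$, so your integrability conclusion stands). More importantly, your "factor $3$" splitting between the kernel argument and the integration variable enlarges the wave-pattern constants by $4$ (by $16$ for the Gaussian pieces), overshooting the factor $\frac32$ demanded by the statement; the paper uses ratios like $\frac{11}{10}$ exactly so that the output constants stay at $\frac32 C'_1,\frac32 C''_1$, which the bootstrap in Section 3 needs (it passes from $\frac35 C'_1$ to $\frac9{10}C'_1<C'_1$). Relatedly, the hypothesis $C'_1,C''_1\ge 60$ is there to guarantee that the absolute-scale tails produced by the kernel, such as $\exp(-|y|/66)$ and $\exp\left(-\left|x-\frac{At}{2}y\right|/(66(1+At))\right)$, are dominated by the target tails with constant $\frac32 C'_1\ge 90$; it is not about the ratio of old to new constants, which is fixed at $\frac32$.
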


\begin{proof}In the region $\left|x-\frac{At}2 y-\left(x_0-\frac{At}2y_0\right)\right|\ge At\left|y-y_0\right|$, one has that
$$
\left|x-x_0\right|=\left|x-\frac{At}2 y-\left(x_0-\frac{At}2y_0\right)+\frac{At}2\left(y-y_0\right)\right|\ge \left|x-\frac{At}2 y-\left(x_0-\frac{At}2y_0\right)\right|/2,
$$
and thus
$$
\exp\left(-\frac{|x-x_0|}{3}\right)\le \exp\left(\frac{-\left|x-\frac{At}2 y-\left(x_0-\frac{At}2y_0\right)\right|}{6}\right).$$
And if $\left|x-\frac{At}2 y-\left(x_0-\frac{At}2y_0\right)\right|< At\left|y-y_0\right|$, it holds that
$$
\exp\left(-\frac{|y-y_0|}{6}\right)\le \exp\left(-\frac{\left|x-\frac{At}2 y-\left(x_0-\frac{At}2y_0\right)\right|}{6At}\right).$$
Then one always has that
\begin{equation}\label{expo1}
\exp\left(-\frac{|x-x_0|}{3}-\frac{|y-y_0|}{3}\right)\le \exp\left(-\frac{|y-y_0|}{6}-\frac{\left|x-\frac{At}2 y-\left(x_0-\frac{At}2y_0\right)\right|}{6(1+At)}\right),
\end{equation}
and thus
\begin{multline}\nonumber
\begin{aligned}
&\iiint_{\mathbb{R}^3}\left(1+\frac{1}{\sqrt{(x-x_0)^2+(y-y_0)^2+(z-z_0)^2}} \right)\frac{e^{-\sqrt{(x-x_0)^2+(y-y_0)^2+(z-z_0)^2}}}{\sqrt{(x-x_0)^2+(y-y_0)^2+(z-z_0)^2}}\\[2mm] &\cdot\mathscr{W}(x_0,y_0,z_0,t;C'_1,C''_1,C''_1;A)dx_0dy_0dz_0\\
\le& \iiint_{\mathbb{R}^3}\left(1+\frac{1}{\sqrt{(x-x_0)^2+(y-y_0)^2+(z-z_0)^2}} \right)\frac{e^{-\sqrt{(x-x_0)^2+(y-y_0)^2+(z-z_0)^2}/3}}{\sqrt{(x-x_0)^2+(y-y_0)^2+(z-z_0)^2}}\\
&\cdot \exp\left(-\frac{|x-x_0|}{3}-\frac{|y-y_0|}{3}-\frac{|z-z_0|}{3}\right)\cdot\mathscr{W}(x_0,y_0,z_0,t;C'_1,C''_1,C''_1;A)dx_0dy_0dz_0\\
\le&
\iiint_{\mathbb{R}^3}\left(1+\frac{1}{\sqrt{(x-x_0)^2+(y-y_0)^2+(z-z_0)^2}} \right)\frac{e^{-\sqrt{(x-x_0)^2+(y-y_0)^2+(z-z_0)^2}/3}}{\sqrt{(x-x_0)^2+(y-y_0)^2+(z-z_0)^2}}\\
&\cdot
\left(\exp\left(-\frac{|y-y_0|}6-\frac{\left|x-\frac{At}2 y-\left(x_0-\frac{At}2y_0\right)\right|}{6(1+At)}\right)
\mathscr{W}^x(x_0,y_0,t;C'_1,C'_1;A)\mathscr{W}^o(y_0,t;C''_1,C''_1)\right)dx_0dy_0\\
 &\cdot \left(e^{-|z-z_0|/3}\cdot\mathscr{W}^o(z_0,t;C''_1,C''_1)\right)dz_0.
\end{aligned}
\end{multline}
In the above integral, we choose to integrate
$$
\left(1+\frac{1}{\sqrt{(x-x_0)^2+(y-y_0)^2+(z-z_0)^2}} \right)\frac{e^{-\sqrt{(x-x_0)^2+(y-y_0)^2+(z-z_0)^2}}}{\sqrt{(x-x_0)^2+(y-y_0)^2+(z-z_0)^2}}
$$
with respect to $x_0, y_0, z_0$. It is integrable and also no extra factors about time come out after integration; the other terms will be estimated after a decomposition of $\mathbb{R}^2$: if $x_0, y_0$ satisfy $\left|x-\frac{At}2 y\right|\ge \frac{11}{10}\left|x_0-\frac{At}2 y_0\right|$ and $\left|y\right|\ge \frac{11}{10}\left|y_0\right|$, it holds that
$$
\exp\left(-\frac{|y-y_0|}6-\frac{\left|x-\frac{At}2 y-\left(x_0-\frac{At}2y_0\right)\right|}{6(1+At)}\right)\le \exp\left(-\frac{|y|}{66}-\frac{\left|x-\frac{At}2 y\right|}{66(1+At)}\right),
$$
and if $\left|x-\frac{At}2 y\right|\ge \frac{11}{10}\left|x_0-\frac{At}2 y_0\right|$ and $\left|y\right|\le \frac{11}{10}\left|y_0\right|$, it holds that
$$
e^{-\frac{\left|x-\frac{At}2 y-\left(x_0-\frac{At}2y_0\right)\right|}{6(1+At)}}\mathscr{W}^o(y_0,t;C''_1,C''_1)\le e^{-\frac{\left|x-\frac{At}2 y\right|}{66(1+At)}}\mathscr{W}^o(y,t;\frac{121}{100}C''_1,\frac{11}{10}C''_1).
$$
Similarly, for $\left\{(x_0,y_0): \left|x-\frac{At}2 y\right|\le \frac{11}{10}\left|x_0-\frac{At}2 y_0\right|, \left|y\right|\ge \frac{11}{10}\left|y_0\right|\right\}$ and $\left\{(x_0,y_0): \left|x-\frac{At}2 y\right|\le \frac{11}{10}\left|x_0-\frac{At}2 y_0\right|, \left|y\right|\le \frac{11}{10}\left|y_0\right|\right\}$, one has
$$
e^{-\frac{|y-y_0|}6}\mathscr{W}^x(x_0,y_0,t;C'_1,C'_1;A)\le e^{-\frac{|y|}{66}}\mathscr{W}^x(x,y,t;\frac{121}{100}C'_1,\frac{11}{10}C'_1;A),
$$
and
$$
\mathscr{W}^x(x_0,y_0,t;C'_1,C'_1;A)\cdot \mathscr{W}^o(y_0,t;C''_1,C''_1)
\le \mathscr{W}^x(x,y,t;\frac{121}{100}C'_1,\frac{11}{10}C'_1;A)\cdot \mathscr{W}^o(y,t;\frac{121}{100}C'_1,\frac{11}{10}C'_1)
$$
respectively. Based on a similar but much simpler decomposition of $\mathbb{R}$ for $z_0$, one also has that
$$e^{-|z-z_0|/3}\cdot \mathscr{W}^o(z_0,t;C''_1,C''_1)\le \mathscr{W}^o(z,t;\frac{121}{100}C'_1,\frac{11}{10}C'_1).$$
Thus,
\begin{multline}
\begin{aligned}
&\iiint_{\mathbb{R}^3}\left(1+\frac{1}{\sqrt{(x-x_0)^2+(y-y_0)^2+(z-z_0)^2}} \right)\frac{e^{-\sqrt{(x-x_0)^2+(y-y_0)^2+(z-z_0)^2}}}{\sqrt{(x-x_0)^2+(y-y_0)^2+(z-z_0)^2}}\\[2mm] &\cdot\mathscr{W}(x_0,y_0,z_0,t;C'_1,C''_1,C''_1;A)dx_0dy_0dz_0\\
\le &C\mathscr{W}(x,y,z,t;\frac 32C'_1,\frac 32C''_1,\frac 32C''_1;A)
\\
&\cdot\iiint_{\mathbb{R}^3}\left(1+\frac{1}{\sqrt{(x-x_0)^2+(y-y_0)^2+(z-z_0)^2}} \right)\frac{e^{-\sqrt{(x-x_0)^2+(y-y_0)^2+(z-z_0)^2}/3}}{\sqrt{(x-x_0)^2+(y-y_0)^2+(z-z_0)^2}}\\
&\cdot\mathscr{W}(x_0,y_0,z_0,t;C'_1,C''_1,C''_1;A)dx_0dy_0dz_0\\
\le& C\mathscr{W}(x,y,z,t;\frac 32C'_1,\frac 32C''_1,\frac 32C''_1;A)\
\end{aligned}
\end{multline}
which verifies \eqref{2.39}.
\end{proof}

\setcounter{equation}{0}

\section{Global existence, nonlinear stability and pointwise structure}

In this section, we first show the local existence and blow-up criterion. The criterion shows that the solutions for \eqref{pro} exist globally if the $L^\infty$-norm is controlled. The $L^\infty$ boundness could be proved by Green's function method. Meanwhile, one also obtains the pointwise space-time structures of the solutions and finished the proofs of Theorem \ref{main} and \ref{main0}.

\subsection{Local well-posedness and regularity criterion}

The local well-posedness and $L^\infty$-criterion of the problem \eqref{pro} could be established as follows:
\begin{pro}\label{local}
Let $\epsilon=1$ and initial functions $n_0, c_0\ge 0, n_0, \nabla c_0\in H^s\cap W^{s,\infty}(\mathbb{R}^3) (s\ge1)$. Then there exists $T=T(n_0,c_0)>0$ such that the non-negative solution of \eqref{pro} satisfies
$$n(x,y,t), \nabla c(x,y,t)\in C([0,T],H^s\cap W^{s,\infty}(\mathbb{R}^3)).$$
Moreover, for any given $T_0>0$, if the solution verifies the following condition
$$\lim_{t->T_0} \sup_{0\le \tau\le t}\|n(\cdot,\cdot,\cdot,\tau)\|_{L^\infty}<\infty,$$
it could be extended up to time $T+\delta$ for a sufficiently small positive constant $\delta$. Furthermore, if $n_0\in L^1(\mathbb{R}^3)$, the $L^1$ norm of the solution is preserved for all time.
\end{pro}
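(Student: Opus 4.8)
The plan is the standard iteration‑plus‑bootstrap scheme for chemotaxis models; the one feature that is not routine is the unbounded transport coefficient $Ay$ in the linear parts of \eqref{pro}, which forbids a direct appeal to the classical heat semigroup on $L^p(\mathbb{R}^3)$ and to standard maximal regularity. I would remove it by passing to the moving frame $x\mapsto x+Aty$: this converts $Ay\partial_x-\Delta$ into the spatially constant, time‑dependent operator $-\big(\partial_x^2+(\partial_y-At\partial_x)^2+\partial_z^2\big)$, whose propagator is an honest (anisotropic, time‑dependent) Gaussian convolution in all of $(x,y,z)$ and is uniformly parabolic on $[0,T]$ with constants depending on $AT$; on a finite interval this causes no trouble, since every estimate below is allowed to depend on $A$ and on the time horizon. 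Alternatively one uses the explicit sheared kernels $\mathbb{G}$ and $\mathbb{G}_{c,1}$ of \eqref{green} and \eqref{greenc} together with the bounds of Lemma \ref{lemma1}. A preliminary reduction: the natural unknowns are the pair $(n,\nabla c)$ rather than $(n,c)$, because $c_0$ is not assumed to decay and, taking $\nabla$ of the second equation of \eqref{pro}, one produces besides the transported gradient $Ay\partial_x\nabla c$ only a zero‑order term proportional to $A\,\partial_x c$; thus the system closes in $(n,\nabla c)$, the quadratic nonlinearity expanding as $\nabla\cdot(n\nabla c)=\nabla c\cdot\nabla n+n\,\Delta c$.

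For local well‑posedness I would run a Picard iteration in $X_T:=C\big([0,T];H^s\cap W^{s,\infty}(\mathbb{R}^3)\big)$ for $(n,\nabla c)$, on the Duhamel formulation
\begin{equation}\nonumber
n=\mathbb{G}(t)*n_0-\int_0^t\nabla\mathbb{G}(t-\sigma)*(n\nabla c)(\sigma)\,d\sigma,\qquad \nabla c=\nabla\mathbb{G}_{c,1}(t)*c_0+\int_0^t\nabla\mathbb{G}_{c,1}(t-\sigma)*n(\sigma)\,d\sigma,
\end{equation}
where one derivative has been moved from the source terms onto the kernels. Two ingredients suffice: (i) the linear bounds $\|\mathbb{G}(t)*f\|_{H^s\cap W^{s,\infty}}\le C(A,T)\|f\|_{H^s\cap W^{s,\infty}}$ and the smoothing estimate $\|\nabla\mathbb{G}(t)*g\|_{H^s\cap W^{s,\infty}}\le C(A,T)\,t^{-1/2}\|g\|_{H^s\cap W^{s,\infty}}$, locally integrable in $t$, with the same for $\mathbb{G}_{c,1}=e^{-t}\mathbb{G}$ (all obtained from \eqref{greenp}, \eqref{greenpx}-\eqref{greenpz} and Lemma \ref{lemma1}); and (ii) the algebra estimate $\|fg\|_{H^s\cap W^{s,\infty}}\le C\|f\|_{H^s\cap W^{s,\infty}}\|g\|_{H^s\cap W^{s,\infty}}$, valid for $s\ge1$. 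A contraction on a small ball of $X_T$ for $T=T(n_0,c_0)$ then yields the unique local solution. Non‑negativity is propagated along the iteration: if $n^{(k)}\ge0$, the weak maximum principle for $\partial_t c^{(k)}+Ay\partial_x c^{(k)}-\Delta c^{(k)}+c^{(k)}=n^{(k)}$ with datum $c_0\ge0$ gives $c^{(k)}\ge0$, after which $n^{(k+1)}$ solves the linear drift‑diffusion equation $\partial_t n^{(k+1)}+\big(Ay\,\partial_x+\nabla c^{(k)}\!\cdot\!\nabla\big)n^{(k+1)}-\Delta n^{(k+1)}+(\Delta c^{(k)})\,n^{(k+1)}=0$ with bounded zero‑order coefficient, so the maximum principle (using the spatial decay of the iterates) again gives $n^{(k+1)}\ge0$; this survives the passage to the limit.

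For the blow‑up criterion, suppose $M:=\limsup_{t\to T_0}\sup_{0\le\tau\le t}\|n(\cdot,\tau)\|_{L^\infty}<\infty$; the goal is a uniform $H^s\cap W^{s,\infty}$ bound for $(n,\nabla c)$ on $[0,T_0)$, after which the local theorem applied at a time close to $T_0$ extends the solution past $T_0$. First, interpolating the conserved $L^1$ norm with the $L^\infty$ bound gives $\|n(\tau)\|_{L^p}\le C$ for every $p<\infty$, whence parabolic $L^q$ regularity for the $c$‑equation (transparent in the moving frame) yields $\|\nabla c(\tau)\|_{W^{1,q}}\le C$ for all $q<\infty$, and hence $\|\nabla c\|_{L^\infty([0,T_0)\times\mathbb{R}^3)}\le C$ by Sobolev embedding. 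Second, a bootstrap on the Duhamel formulation: using $\|n\nabla c\|_{H^s\cap W^{s,\infty}}\lesssim \big(\|n\|_{L^\infty}+\|\nabla c\|_{L^\infty}\big)\big(\|n\|_{H^s\cap W^{s,\infty}}+\|\nabla c\|_{H^s\cap W^{s,\infty}}\big)$, the $t^{-1/2}$ smoothing above, and the bound just obtained on $\|n\|_{L^\infty}+\|\nabla c\|_{L^\infty}$, a Gr\"onwall inequality for weakly singular kernels gives the desired uniform bound on $[0,T_0)$ (the commutator obstruction $[\partial_y,Ay\partial_x]=A\,\partial_x$ that would surface in a direct energy estimate is here absorbed into the kernel bounds).

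Finally, conservation of mass is immediate once the $L^1$ bound has been propagated: since $\iiint_{\mathbb{R}^3}\mathbb{G}(x,y,z,t;y_0)\,dx\,dy\,dz=1$, the maps $f\mapsto\mathbb{G}(t)*f$ and $f\mapsto\mathbb{G}_{c,1}(t)*f$ are $L^1$‑contractions, so $n(t)\in L^1(\mathbb{R}^3)$ for all $t$; integrating the first equation of \eqref{pro} over $\mathbb{R}^3$ and using $\int_{\mathbb{R}}\partial_x n\,dx=0$, $\int_{\mathbb{R}^3}\Delta n=0$ and $\int_{\mathbb{R}^3}\nabla\cdot(n\nabla c)=0$ gives $\tfrac{d}{dt}\int_{\mathbb{R}^3} n\,dx=0$, hence $\|n(t)\|_{L^1}=\int_{\mathbb{R}^3}n_0\,dx=\|n_0\|_{L^1}$ since $n\ge0$. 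The one genuine obstacle throughout is the unbounded coefficient $Ay$; once it is tamed by the moving‑frame reduction (or, equivalently, by the explicit sheared kernels), the remaining steps are the familiar chemotaxis estimates, with all constants permitted to depend on $A$ and on the time horizon.
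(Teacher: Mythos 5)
The paper does not actually prove Proposition \ref{local}: it declares the argument standard and refers to \cite{agm,clm} for local existence and to \cite{h} for the regularity criterion, so there is no in-paper proof to compare against line by line. Your sketch is exactly the kind of standard argument being invoked, correctly adapted to the new feature of the problem (the unbounded drift $Ay\partial_x$) via the moving frame $x\mapsto x+Aty$, or equivalently via the explicit sheared kernels \eqref{green}, \eqref{greenc} and the bounds of Lemma \ref{lemma1}; the observation that the system closes in $(n,\nabla c)$ (so that only $\nabla c_0$, not $c_0$, needs to lie in $H^s\cap W^{s,\infty}$) matches the hypotheses of the proposition, and the contraction, positivity, continuation and mass-conservation steps are all sound in outline, with all constants allowed to depend on $A$ and $T$ as you say.

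One blemish to fix: in the continuation argument you bound $\|\nabla c\|_{L^\infty}$ by interpolating the \emph{conserved $L^1$ norm} with the assumed $L^\infty$ bound on $n$, but $n_0\in L^1$ is only hypothesized in the last, separate clause of the proposition, so the criterion should not lean on it. The repair is immediate and stays inside your own framework: either propagate the $L^2$ norm (available since $n_0\in H^s$) by a Gr\"onwall estimate and interpolate $L^2$--$L^\infty$, or, more directly, bound $\|\nabla c(t)\|_{L^\infty}$ from the Duhamel representation \eqref{c1} for $\nabla c$ using the $t^{-1/2}$ kernel smoothing and only $\|n\|_{L^\infty}$ together with $\nabla c_0\in L^\infty$; with that change the $L^q$ maximal-regularity detour is unnecessary and the rest of your bootstrap goes through unchanged. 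Two further small points worth noting rather than fixing: positivity of the $c$-iterates follows at once from the positivity of the kernel $\mathbb{G}_{c,1}$ in \eqref{c1} (no maximum principle needed there), and the integrations by parts in the mass-conservation step (in particular $\int Ay\,\partial_x n$) should be justified with cutoff functions, using only $n\in L^1$, which your $L^1$ propagation already supplies.
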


\begin{remark}
Proposition \ref{local} still holds for the case $\epsilon=0$.
\end{remark}

Proposition \ref{local} could be proved by following the standard method and we omit the details here. One could refer to \cite{agm,clm} for a standard proof of the local existence and \cite{h} for regularity criterion.
Proposition \ref{local} shows that to prove the global existence, one only needs to control the $L^\infty$-norm of the solutions and it will be done by Green's function method in the rest of the paper.

\subsection{Representations of solutions and linear estimates}
The solution $n(x,y,z,t)$ of \eqref{pro} could be represented as follows:
\begin{multline}\label{pre}
\begin{aligned}
n(x,y,z,t)=&\iiint_{\mathbb{R}^3}\mathbb{G}(x-x_0,y,z-_0,t;y_0) n_0(x_0,y_0,z_0)dx_0dy_0dz_0\\
&+\int_0^t \iiint_{\mathbb{R}^3}\mathbb{G}(x-x_0,y,z-z_0,t-\sigma;y_0) \nabla \left(n\nabla c\right)(x_0,y_0,z_0)d x_0 dy_0dz_0d\sigma,
\end{aligned}
\end{multline}
in which $c(x,y,z,t)$ could be solved directly from the second equation of \eqref{pro} by Green's functions $\mathbb{G}_{c,1}(x,y,z,t;y_0)$ or $\mathbb{G}_{c,0}(x,y,z)$ with the representations \eqref{c1} and \eqref{c0} respectively.

\vskip .05in

First, consider the structure of initial wave propagation. From Lemma \ref{lemma1} and \eqref{ini}, there exists a positive constant $N_0$ independent of $C_0$ and $A$ such that
\begin{multline}\label{linn}
\begin{aligned}
&\left|\iiint_{\mathbb{R}^3}\mathbb{G}(x-x_0,y,z-z_0,t;y_0) n_0(x_0,y_0,z_0)dx_0dy_0dz_0\right|\\
=&O(1)\iiint_{\mathbb{R}^3}\mathbb{G}(x-x_0,y,z-z_0,t;y_0) \exp{\left(-\frac{|x_0|+|y_0|+|z_0|}{C_*}\right)} dx_0dy_0dz_0\\
\le& \frac 12N_0C_0 t^{-1/2+\beta}(1+t)^{-1-\beta}\left(1+A^2t^2\right)^{-1/2+\beta}
\mathscr{W}^x(x,y,t;16,6C_*;A)\mathscr{W}^o(y,t;9,9C_*)\mathscr{W}^o(z,t;9,4C_*)
\end{aligned}
\end{multline}
for any constant $\beta$ satisfying $0\le\beta\le \frac 12$ and wave structure functions $\mathscr{W}^x(x,y,t;D_1,D_2;A), \mathscr{W}^o(y,t;D_1,D_2)$ defined by \eqref{wavexx} and \eqref{waveyz} respectively. Based on this estimate, one could pose ansatz assumptions as follows:
\begin{equation}
\label{ansatz}
\left|n(x,y,t)\right|\le 2N_0 C_0 \mathscr{A}(t;A,\theta,\gamma) \mathscr{W}(x,y,z,t;C'_1,C''_1,C''_1;A)
\end{equation}
with positive constants $C'_1, C''_1\ge \max\{16C_1, 9C_*, 60\}$ and $\frac 35C'_1, \frac 35C''_1$ satisfying the requirements for $C', C''$ in Lemma \ref{lemmaz}-\ref{lemma0} (i.e. $\frac 35 C''_1>C_1$ with $C_1$ in \eqref{greenpx}-\eqref{greenpz} while \eqref{constant1} and \eqref{constant2} satisfied by $\frac 35C'_1, \frac 35C''_1$ taking place of $C', C''$ and this could be done by taking $C'_1, C''_1$ large).
Here, the wave structure function $\mathscr{W}(x,y,z,t;D_1,D_2,D_3;A)$ with positive constants $D_1, D_2, D_3$ is given by \eqref{wavest} and the decaying rate function $\mathscr{A}(t;A,\theta,\gamma)$ is defined as follows
$$
\mathscr{A}(t;A,\theta,\gamma)\equiv \begin{cases}
1,\ \ \ \ &\text{ for }\ 0< t\le A^{-\theta},\\
A^{-(1-\theta)\gamma}t^{-1/4-\gamma/4}\left(1+A^2t^2\right)^{-1/4+\gamma/4},\ \ \ \ &\text{ for }\ A^{-\theta}< t\le 1,\\
A^{-(1-\theta)\gamma}(1+t)^{-3/2}\left(1+A^2t^2\right)^{-1/2+\gamma/2},\ \ \ \ &\text{ for }\ t>1
\end{cases}
$$
with positive constants $\gamma, \theta$ satisfying
\begin{equation}\label{par11pp}
\frac 23<\theta<1,\ \ \ \ \ \ \ \ \ 1/3< \gamma\le 1/2,
\end{equation}
for Parabolic-Parabolic case and
\begin{equation}\label{par11pe}
\frac 23<\theta<1,\ \ \ \ \ \ \ \ \ 0< \gamma\le 1/2,
\end{equation}
for Parabolic-Elliptic case.

\subsection{Proof of Theorem \ref{main}: Parabolic-Parabolic Case}

To justify the ansatz assumption \eqref{ansatz} through the solution presentation \eqref{pre} for $n(x,y,z,t)$, one still needs a pointwise structure of $\nabla c(x,y,z,t)$:
\begin{lemma}\label{cc1}
\textbf{(Pointwise structures of $c(x,y,z,t)$ and its derivatives.)}
There exists a positive constant $N_{c1}$ independent of $C_0$ and $A$ such that
\begin{multline}\nonumber
\begin{aligned}
\left| c(x,y,z,t)\right|, \left|\nabla c(x,y,z,t)\right|\le &N_{c1}C_0 \mathscr{W}(x,y,z,t;\frac 32C'_1, \frac 32C''_1,\frac 32C''_1;A)\\[2mm]
&\cdot{ {\begin{cases}
1,\ \ \ \ &\text{ for }\ 0< t\le A^{-\theta},\\
A^{-(1-\theta)\gamma}t^{-1/4-\gamma/4}\left(1+A^2t^2\right)^{-1/4+\gamma/4},\ \ \ \ &\text{ for }\ A^{-\theta}< t\le 1,\\
A^{1/2-\gamma/2}\cdot A^{-(1-\theta)\gamma}(1+t)^{-3/2}\left(1+A^2t^2\right)^{-1/2+\gamma/2},\ \ \ \ &\text{ for }\ t>1.
\end{cases}}}
\end{aligned}
\end{multline}
\end{lemma}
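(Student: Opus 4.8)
The plan is to split both $c$ and $\nabla c$, via the Duhamel formula \eqref{c1}, into an ``initial'' part coming from $c_0$ and a ``source'' part coming from $n$, and to estimate each using the computational lemmas of Section~2 together with the ansatz \eqref{ansatz}.

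\textbf{The initial part.} For $c$ itself, $\mathbb{G}_{c,1}=e^{-t}\mathbb{G}$ and $|c_0|\le C_0e^{-(|x|+|y|+|z|)/C_*}$ by \eqref{inic}, so I apply estimate \eqref{green1} of Lemma \ref{lemma1} with the extra prefactor $e^{-t}$, choosing $\beta=\frac12$ for $0<t\le A^{-\theta}$ (this removes the $t^{-1/2}$ singularity) and $\beta=\frac14-\frac\gamma4$ for $t>A^{-\theta}$ (this matches the singularity $t^{-1/4-\gamma/4}$ allowed in the statement); the residual factor $(1+A^2t^2)^{-1/4-\gamma/4}=(1+A^2t^2)^{-1/4+\gamma/4}\,(1+A^2t^2)^{-\gamma/2}$ supplies the factor $A^{-(1-\theta)\gamma}$ since $(1+A^2t^2)^{-\gamma/2}\le A^{-(1-\theta)\gamma}$ for $t>A^{-\theta}$, and for $t>1$ the factor $e^{-t}$ dominates all polynomials. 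For $\nabla c$ the derivative must be moved onto $c_0$ (differentiating $\mathbb{G}$ would create a non-integrable time singularity near $t=0$): for $\partial_x$ and $\partial_z$ one uses $\partial_x\mathbb{G}(x-x_0,\cdot)=-\partial_{x_0}\mathbb{G}(x-x_0,\cdot)$, $\partial_z\mathbb{G}(x-x_0,\cdot)=-\partial_{z_0}\mathbb{G}(x-x_0,\cdot)$ and integrates by parts, picking up $|\nabla c_0|\le C_0^*e^{-(|x|+|y|+|z|)/C_*}$ from \eqref{inic}; for $\partial_y$ one uses the identity
\[
\partial_y\big[\mathbb{G}(x-x_0,y,z-z_0,t;y_0)\big]=-\partial_{y_0}\big[\mathbb{G}(x-x_0,y,z-z_0,t;y_0)\big]+At\,\partial_{x_0}\big[\mathbb{G}(x-x_0,y,z-z_0,t;y_0)\big],
\]
which follows directly from the explicit formula \eqref{greenp}, and integrates by parts in $y_0$ and $x_0$ to reach $\iiint\mathbb{G}\,\partial_{y_0}c_0-At\iiint\mathbb{G}\,\partial_{x_0}c_0$; since $A<C_0/C_0^*$, the coefficient $C_0^*(1+At)$ is bounded by $C_0(1+t)$, and the spare power of $1+t$ is absorbed by $e^{-t}$ for $t>1$ and is harmless for $t\le1$. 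Hence the initial part of $\nabla c$ obeys the same bound as that of $c$.

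\textbf{The source part.} I bound the kernels pointwise: from \eqref{greenp} and $e^{-s}s^{-3/2}\le s^{-2}$ one gets $|\mathbb{G}_{c,1}(x-x_0,y,z-z_0,t-\sigma;y_0)|\le e^{-(t-\sigma)}(t-\sigma)^{-3/2}(1+A^2(t-\sigma)^2)^{-1/2}\,\mathbb{W}(\cdots;1,1,1;A)\le(t-\sigma)^{-2}(1+A^2(t-\sigma)^2)^{-1/2}\,\mathbb{W}(\cdots;1,1,1;A)$, while from \eqref{greenpx}--\eqref{greenpz}, the identity above, and $A(t-\sigma)\le(1+A^2(t-\sigma)^2)^{1/2}$ (which converts the destabilizing $At$ factor in $\partial_y$ into the decay $(1+A^2(t-\sigma)^2)^{-1/2}$) one gets $|\nabla\mathbb{G}_{c,1}(x-x_0,y,z-z_0,t-\sigma;y_0)|\le e^{-(t-\sigma)}(t-\sigma)^{-2}(1+A^2(t-\sigma)^2)^{-1/2}\,\mathbb{W}(\cdots;C_1,C_1,C_1;A)$. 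Feeding the ansatz \eqref{ansatz} into \eqref{c1} and splitting $\int_0^t$ into $\int_0^{\min\{A^{-\theta},t\}}$, $\int_{\min\{A^{-\theta},t\}}^{\min\{1,t\}}$, $\int_{\min\{1,t\}}^t$ according to the three branches of $\mathscr{A}(\sigma)$: on the first piece ($\mathscr{A}(\sigma)\equiv1$) I use the $(t-\sigma)^{-2}$ bound and Lemma \ref{lemma2}, getting $\mathscr{A}_1(t)\,\mathscr{W}(x,y,z,t;\frac32C'_1,\frac32C''_1,\frac32C''_1;A)$; on the second and third pieces, where $\mathscr{A}(\sigma)$ equals $A^{-(1-\theta)\gamma}\sigma^{-1/4-\gamma/4}(1+A^2\sigma^2)^{-1/4+\gamma/4}$ and $A^{-(1-\theta)\gamma}(1+\sigma)^{-3/2}(1+A^2\sigma^2)^{-1/2+\gamma/2}$, I keep the factor $e^{-(t-\sigma)}$ and apply Lemma \ref{cp1} with $\alpha=\frac32$ for $c$ and $\alpha=2$ for $\nabla c$, getting $A^{-(1-\theta)\gamma}\mathscr{A}_3(t;\alpha)\,\mathscr{W}$ and $A^{-(1-\theta)\gamma}\big(e^{-t/2}(1+t)^{-\alpha+1}+(1+t)^{-3/2}\big)(1+A^2t^2)^{-1/2+\gamma/2}\,\mathscr{W}$. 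One then checks that $\mathscr{A}_1(t)$, $A^{-(1-\theta)\gamma}\mathscr{A}_3(t;\alpha)$ and the last expression are all $\le C$ times the piecewise function in the statement, using $\theta\in(2/3,1)$ (so $\epsilon_0>0$ and $A^{1-2\theta}\ll1$), $\gamma\in(1/3,1/2]$, $A\gg1$, and $(1+A^2t^2)^{-\gamma/2}<A^{-(1-\theta)\gamma}$ for $t>A^{-\theta}$. All wave-pattern constants that arise (from \eqref{green1} and from Lemmas \ref{lemma2}, \ref{cp1}) are dominated by $\frac32C'_1,\frac32C''_1$ because $C'_1,C''_1\ge\max\{16C_1,9C_*,60\}$; summing the initial and source contributions and taking $N_{c1}$ large finishes the proof.

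\textbf{The main obstacle.} The delicate point is the $\partial_y$-derivative: unlike $\partial_x$ and $\partial_z$, differentiating the Couette Green's function in $y$ produces, through the drift $Ay\partial_x$, an $x$-type derivative with coefficient growing like $At$ --- precisely the destabilizing effect mentioned in the Remark above. For the initial part this is controlled by trading it against $\partial_{x_0}c_0$ and invoking the smallness $A<C_0/C_0^*$; for the source part it is controlled by the inequality $A(t-\sigma)\le(1+A^2(t-\sigma)^2)^{1/2}$, which keeps the kernel within the class handled by Lemmas \ref{lemma2} and \ref{cp1}. The remainder is bookkeeping of time exponents across the three regions and of the cumbersome but harmless wave-pattern constants.
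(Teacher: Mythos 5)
Your proposal follows essentially the same route as the paper's proof: split $c$ and $\nabla c$ via \eqref{c1} into the initial part (handled by moving derivatives onto $c_0$ through $\partial_y\mathbb{G}_{c,1}=-\partial_{y_0}\mathbb{G}_{c,1}+At\,\partial_{x_0}\mathbb{G}_{c,1}$, Lemma \ref{lemma1}, and the smallness $C_0^*A<C_0$) and the Duhamel source part (ansatz \eqref{ansatz} plus the three-region time splitting, Lemma \ref{lemma2} on $[0,\min\{A^{-\theta},t\}]$ and Lemma \ref{cp1} with $\alpha=3/2$ for $c$ and $\alpha=2$ for $\nabla c$ on the remaining pieces), with the same exponent bookkeeping producing the extra $A^{1/2-\gamma/2}$ for $t>1$. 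The argument is correct and matches the paper's proof in all essential steps.
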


\begin{proof}
For Parabolic-Parabolic case, i.e. $\epsilon\equiv 1$ in \eqref{pro}, the solution $c(x,y,z,t)$ could be represented by \eqref{c1} and thus
\begin{multline}\label{359}
\begin{aligned}
\nabla c(x,y,z,t)
=&\iiint_{\mathbb{R}^3}\nabla\mathbb{G}_{c,1}(x-x_0,y,z-z_0,t;y_0)c(x_0,y_0,z_0,0)dx_0dy_0dz_0\\
&+\int_0^t\iiint_{\mathbb{R}^3}
\nabla\mathbb{G}_{c,1}(x-x_0,y,z-z_0,t-\sigma;y_0)n(x_0,y_0,z_0,\sigma)dx_0dy_0dz_0d\sigma.
\end{aligned}
\end{multline}
A direct computation yields that
$$\begin{cases}
\partial_x\mathbb{G}_{c,1}(x-x_0,y,z-z_0,t;y_0)=-\partial_{x_0}\mathbb{G}_{c,1}(x-x_0,y,z-z_0,t;y_0),\\
\partial_y\mathbb{G}_{c,1}(x-x_0,y,z-z_0,t;y_0)=-\partial_{y_0}\mathbb{G}_{c,1}(x-x_0,y,z-z_0,t;y_0)+At\partial_{x_0}\mathbb{G}_{c,1}(x-x_0,y,z-z_0,t;y_0),\\
\partial_z\mathbb{G}_{c,1}(x-x_0,y,z-z_0,t;y_0)=-\partial_{z_0}\mathbb{G}_{c,1}(x-x_0,y,z-z_0,t;y_0),
\end{cases}$$
and thus similar to Lemma \ref{lemma1}, one has that
\begin{multline}\label{cin2}
\begin{aligned}
&\left|\iiint_{\mathbb{R}^3}\nabla\mathbb{G}_{c,1}(x-x_0,y,z-z_),t;y_0)c(x_0,y_0,z_0,0)dx_0dy_0dz_0\right|\\
\le& C\left(1+At\right)\left|\iiint_{\mathbb{R}^3}\mathbb{G}_{c,1}(x-x_0,y,z-z_),t;y_0)\nabla c(x_0,y_0,z_0,0)dx_0dy_0dz_0\right|\\
\le& CC_0^* \mathscr{W}^x(x,y,t;16C_1,6C_*;A)\mathscr{W}^o(y,t;9C_1,9C_*)\mathscr{W}^o(z,t;9C_1,4C_*)\\
&\cdot e^{-t}(1+t)^{-1}
\begin{cases}
2,\ \ \ \ &\text{ for }\ 0< t\le A^{-1},\\
2 At\cdot t^{-1/2}\left(1+A^2t^2\right)^{-1/2} ,\ \ \ \ &\text{ for }\ t>A^{-1}
\end{cases}
\end{aligned}
\end{multline}
for a positive constant $C$ independent of $C_0^*$ and $A$.
From \eqref{cin2} and \eqref{inic}, one could take $A$ to satisfy $C_0^*A<C_0/2$ and has that
\begin{equation}\label{cin}
\left|\iiint_{\mathbb{R}^3}\nabla\mathbb{G}_{c,1}(x-x_0,y,z-z_0,t;y_0)c(x_0,y_0,z_0,0)dx_0dy_0dz_0\right|
\le N_1C_0 e^{-t/2}\mathscr{A}(t;A,\theta,\gamma) \mathscr{W}(x,y,z,t;C'_1,C''_1,C''_1;A),
\end{equation}
for a constant $N_1>0$ independent of $C_0$ and $A$.

\vskip .05in

The remainder could be estimated by Lemma \ref{lemma2} and Lemma \ref{cp1} with $\alpha=2$:
\begin{equation}\label{361}
\begin{aligned}
&\left|\int_0^t\iiint_{\mathbb{R}^3}
\nabla\mathbb{G}_{c,1}(x-x_0,y,z-z_0,t-\sigma;y_0)n(x_0,y_0,z_0,\sigma)dx_0dy_0dz_0d\sigma\right|\\
=&O(1)\left(N_0C_0\right)\int_0^{\min\left\{A^{-\theta},t\right\}} \iiint_{\mathbb{R}^3}e^{-t+\sigma}(t-\sigma)^{-2}\left(1+A^2 (t-\sigma)^2\right)^{-1/2}\\
&\cdot \mathbb{W}(x-x_0,y,z-z_0,t-\sigma;y_0;C_1,C_1,C_1;A)\cdot
\mathscr{W}(x_0,y_0,z_0,\sigma;C'_1,C''_1,C''_1;A)dx_0dy_0dz_0d\sigma\\
&+O(1)A^{-(1-\theta)\gamma}\left(N_0C_0\right)\int_{\min\left\{A^{-\theta},t\right\}}^{\min\left\{1, t\right\}} \iiint_{\mathbb{R}^3}e^{-t+\sigma}(t-\sigma)^{-2}\left(1+A^2 (t-\sigma)^2\right)^{-1/2}\sigma^{-1/4-\gamma/4}\left(1+A^2\sigma^2\right)^{-1/4+\gamma/4}\\
&\cdot \mathbb{W}(x-x_0,y,z-z_0,t-\sigma;y_0;C_1,C_1,C_1;A)\cdot \mathscr{W}(x_0,y_0,z_0,\sigma;C'_1,C''_1,C''_1;A)dx_0dy_0dz_0d\sigma\\
&+O(1)A^{-(1-\theta)\gamma}\left(N_0C_0\right)\int_{\min\left\{1, t\right\}}^t \iiint_{\mathbb{R}^3}e^{-t+\sigma}(t-\sigma)^{-2}\left(1+A^2 (t-\sigma)^2\right)^{-1/2}
(1+\sigma)^{-3/2}\left(1+A^2\sigma^2\right)^{-1/2+\gamma/2}
\\
&\cdot \mathbb{W}(x-x_0,y,z-z_0,t-\sigma;y_0;C_1,C_1,C_1;A)
\cdot \mathscr{W}(x_0,y_0,z_0,\sigma;C'_1,C''_1,C''_1;A)dx_0dy_0dz_0d\sigma\\
=&O(1)\left(N_0C_0\right) \mathscr{A}_1(t)\mathscr{W}(x,y,z,t;\frac 32C'_1, \frac 32C''_1,\frac 32C''_1;A)\\
&+O(1)A^{-(1-\theta)\gamma}\left(N_0C_0\right) \mathscr{A}_3(t;2)\mathscr{W}(x,y,z,t;\frac 32C'_1, \frac 32C''_1,\frac 32 C''_1;A)\\
&+O(1)A^{-(1-\theta)\gamma}\left(N_0C_0\right)(1+t)^{-1}\left(e^{-t/2}+(1+t)^{-1/2}\right)
\left(1+A^2t^2\right)^{-1/2+\gamma/2}\mathscr{W}(x,y,z,t;\frac 32C'_1, \frac 32C''_1,\frac 32C''_1;A),
\end{aligned}
\end{equation}
with $\mathscr{A}_1(t)$ and $\mathscr{A}_3(t;\alpha)$ given by \eqref{aa1} and \eqref{aa3} respectively.

\vskip .05in

Combining \eqref{359}, \eqref{cin} and \eqref{361}, one obtains the pointwise estimate for $\nabla c(x,y,z,t)$.

It is similar to \eqref{cin} and \eqref{361} for the pointwise structure of $c(x,y,z,t)$: by Lemma \ref{lemma1}, Lemma \ref{lemma2} and Lemma \ref{cp1} with $\alpha=3/2$, one has that
\begin{equation}\nonumber
\begin{aligned}
\left|c(x,y,z,t)\right|
=&\left|\iiint_{\mathbb{R}^3}\mathbb{G}_{c,1}(x-x_0,y,z-z_0,t;y_0)c(x_0,y_0,z_0,0)dx_0dy_0dz_0\right.\\
&\left.+\int_0^t\iiint_{\mathbb{R}^3}
\mathbb{G}_{c,1}(x-x_0,y,z-z_0,t-\sigma;y_0)n(x_0,y_0,z_0,\sigma)dx_0dy_0dz_0d\sigma\right|\\
\le &CC_0 e^{-t}\mathscr{A}(t;A,\theta,\gamma) \mathscr{W}(x,y,t;C'_1,C''_1)+C\left(N_0C_0\right) \mathscr{A}_1(t)\mathscr{W}(x,y,z,t;\frac 32C'_1, \frac 32C''_1,\frac 32C''_1;A)\\
&+CA^{-(1-\theta)\gamma}\left(N_0C_0\right) \mathscr{A}_3(t;3/2)\mathscr{W}(x,y,z,t;\frac 32C'_1, \frac 32C''_1,\frac 32C''_1;A)\\
&+CA^{-(1-\theta)\gamma}\left(N_0C_0\right)(1+t)^{-1/2}\left(e^{-t/2}+(1+t)^{-1}\right)\left(1+A^2t^2\right)^{-1/2+\gamma/2}\mathscr{W}(x,y,z,t;\frac 32C'_1, \frac 32C''_1,\frac 32C''_1;A)\\
\le& CC_0 \mathscr{A}(t;A,\theta,\gamma)\mathscr{W}(x,y,z,t;\frac 32C'_1, \frac 32C''_1,\frac 32C''_1;A)
\cdot \left(A^{1/2-\gamma/2}\chi(t)\right).
\end{aligned}
\end{equation}
Here, $\chi(t)$ is a cut function defined by
\begin{equation}\label{chii}
\chi(t)\equiv \begin{cases}
0,\ \ \text{ for }\ 0<t\le 1\\
1,\ \ \text{ for }\ t>1\end{cases}
\end{equation}
and we finish the proof.
\end{proof}
Thus, Lemma \ref{cc1} together with \eqref{ansatz} results in that
\begin{equation}\label{none}
\big|\left(n\nabla c\right)(x,y,z,t)\big|
\le 8\left(N_0N_{c1}C_0^2\right)\cdot \Big(\mathscr{A}(t;A,\theta,\gamma)\Big)^2 \mathscr{W}(x,y,z,t;\frac 35 C'_1,\frac 35 C''_1,\frac 35 C''_1;A)
\cdot \left(A^{1/2-\gamma/2}\chi(t)\right).
\end{equation}
\textbf{Justification of ansatz assumption.} Substitute \eqref{none} into \eqref{pre} to verify the ansatz assumption \eqref{ansatz}: for $0< t\le A^{-\theta}$, Lemma \ref{lemma2} yields that
\begin{equation}\label{1}
\begin{aligned}
&\left|\int_0^t \iiint_{\mathbb{R}^3}\mathbb{G}(x-x_0,y,z-z_0,t-\sigma;y_0) \nabla \left(n\nabla c\right)(x_0,y_0,z_0)d x_0 dy_0dz_0\right|\\
=&\left|\int_0^t \iiint_{\mathbb{R}^3}\nabla \mathbb{G}(x-x_0,y,z-z_0,t-\sigma;y_0) \cdot \left(n\nabla c\right)(x_0,y_0,z_0)d x_0 dy_0dz_0\right|\\
=&O(1)\left(N_0N_{c1}C_0^2\right)\int_0^{t} \iiint_{\mathbb{R}^3}(t-\sigma)^{-2}\left(1+A^2 (t-\sigma)^2\right)^{-1/2}\\
&\cdot \mathbb{W}(x-x_0,y,z-z_0,t-\sigma;y_0;C_1,C_1,C_1;A)
\cdot \mathscr{W}(x_0,y_0,z_0,\sigma;\frac 35C'_1,\frac 35C''_1,\frac 35C''_1;A)dx_0dy_0dz_0d\sigma\\
\le& CA^{-\theta/2} \left(N_0N_{c1}C_0^2\right) \mathscr{W}(x,y,z,t;\frac 9{10}C'_1,\frac 9{10}C''_1,\frac 9{10}C''_1;A)
< N_0C_0 \mathscr{W}(x,y,z,t;C'_1,C''_1,C''_1;A).
\end{aligned}
\end{equation}
Here, we take $A$ sufficiently large to ensure $CA^{-\theta/2}N_{c1}C_0< 1$.

For $A^{-\theta}<t\le 1$, one could apply Lemma \ref{lemma2} and
Lemma \ref{lemma3} to obtain that
\begin{equation}\label{2}
\begin{aligned}
&\left|\int_0^t \iiint_{\mathbb{R}^3}\mathbb{G}(x-x_0,y,z-z_0,t-\sigma;y_0) \nabla \left(n\nabla c\right)(x_0,y_0,z_0)d x_0 dy_0dz_0\right|\\
=&O(1)\left(N_0N_{c1}C_0^2\right)\int_0^{A^{-\theta}} \iiint_{\mathbb{R}^3}(t-\sigma)^{-2}\left(1+A^2 (t-\sigma)^2\right)^{-1/2}\\
&\cdot \mathbb{W}(x-x_0,y,z-z_0,t-\sigma;y_0;C_1,C_1,C_1;A)
\cdot \mathscr{W}(x_0,y_0,z_0,\sigma;\frac 35C'_1,\frac 35C''_1,\frac 35C''_1;A)dx_0dy_0dz_0d\sigma\\
&+O(1)A^{-2(1-\theta)\gamma}\left(N_0N_{c1}C_0^2\right)\int_{A^{-\theta}}^t \iiint_{\mathbb{R}^3}(t-\sigma)^{-2}\left(1+A^2 (t-\sigma)^2\right)^{-1/2}\sigma^{-1/2-\gamma/2}
\left(1+A^2\sigma^2\right)^{-1/2+\gamma/2}
\\
&\cdot \mathbb{W}(x-x_0,y,z-z_0,t-\sigma;y_0;C_1,C_1,C_1;A)
\cdot \mathscr{W}(x_0,y_0,z_0,\sigma;\frac 35C'_1,\frac 35C''_1,\frac 35C''_1;A)dx_0dy_0dz_0d\sigma\\
\le& CA^{-\epsilon_0-(1-\theta)\gamma} \left(N_0N_{c1}C_0^2\right) t^{-1/4-\gamma/4}\left(1+A^2t^2\right)^{-1/4+\gamma/4} \mathscr{W}(x,y,z,t;\frac 9{10}C'_1,\frac 9{10}C''_1,\frac 9{10}C''_1;A)\\
&+CA^{-2(1-\theta)\gamma}\left(N_0N_{c1}C_0^2\right) t^{-1/4-\gamma/4}\left(1+A^2t^2\right)^{-1/4+\gamma/4}\cdot \mathscr{W}(x,y,z,t;\frac 9{10}C'_1,\frac 9{10}C''_1,\frac 9{10}C''_1;A)\\
\ll& A^{-(1-\theta)\gamma}\left(N_0C_0\right) t^{-1/4-\gamma/4}\left(1+A^2t^2\right)^{-1/4+\gamma/4} \mathscr{W}(x,y,z,t;C'_1,C''_1,C''_1;A).
\end{aligned}
\end{equation}
Here one also takes $A$ sufficiently large so that $CA^{-\epsilon_0}N_{c1}C_0+CA^{-(1-\theta)\gamma}N_{c1}C_0< 1$.

\vskip .05in

For $t>1$, Lemma \ref{lemma4} together with Lemma \ref{lemma2} and \ref{lemma3} results in that
\begin{multline}\label{3}
\begin{aligned}
&\left|\int_0^t \iiint_{\mathbb{R}^3}\mathbb{G}(x-x_0,y,z-z_0,t-\sigma;y_0) \nabla \left(n\nabla c\right)(x_0,y_0,z_0)d x_0 dy_0dz_0\right|\\
=&O(1)\left(N_0N_{c1}C_0^2\right)\int_0^{A^{-\theta}} \iiint_{\mathbb{R}^3}(t-\sigma)^{-2}\left(1+A^2 (t-\sigma)^2\right)^{-1/2}\\
&\cdot \mathbb{W}(x-x_0,y,z-z_0,t-\sigma;y_0;C_1,C_1,C_1;A)
\cdot \mathscr{W}(x_0,y_0,z_0,\sigma;\frac 35C'_1,\frac 35C''_1,\frac 35C''_1;A)dx_0dy_0dz_0d\sigma\\
&+O(1)A^{-2(1-\theta)\gamma}\left(N_0N_{c1}C_0^2\right)\int_{A^{-\theta}}^1 \iiint_{\mathbb{R}^3}(t-\sigma)^{-2}\left(1+A^2 (t-\sigma)^2\right)^{-1/2}\sigma^{-1/2-\gamma/2}\left(1+A^2\sigma^2\right)^{-1/2+\gamma/2}\\
&\cdot \mathbb{W}(x-x_0,y,z-z_0,t-\sigma;y_0;C_1,C_1,C_1;A)
\cdot \mathscr{W}(x_0,y_0,z_0,\sigma;\frac 35C'_1,\frac 35C''_1,\frac 35C''_1;A)dx_0dy_0dz_0d\sigma\\
&+O(1)A^{1/2-\gamma/2-2(1-\theta)\gamma}\left(N_0N_{c1}C_0^2\right)\int_{1}^t \iiint_{\mathbb{R}^3}(t-\sigma)^{-2}\left(1+A^2 (t-\sigma)^2\right)^{-1/2}
(1+\sigma)^{-3}\left(1+A^2\sigma^2\right)^{-1+\gamma}\\
&\cdot \mathbb{W}(x-x_0,y,z-z_0,t-\sigma;y_0;C_1,C_1,C_1;A)
\cdot \mathscr{W}(x_0,y_0,z_0,\sigma;\frac 35C'_1,\frac 35C''_1,\frac 35C''_1;A)dx_0dy_0dz_0d\sigma\\
\le& CA^{1-2\theta}\left(N_0N_{c1}C_0^2\right)(1+t)^{-2}\left(1+A^2t^2\right)^{-1/2}\cdot \mathscr{W}(x,y,z,t;\frac 9{10}C'_1,\frac 9{10}C''_1,\frac 9{10}C''_1;A)\\
&+CA^{-2(1-\theta)\gamma} A^{\gamma}
\left(N_0N_{c1}C_0^2\right)(1+t)^{-2}\left(1+A^2t^2\right)^{-1/2}\cdot \mathscr{W}(x,y,z,t;\frac 9{10}C'_1,\frac 9{10}C''_1,\frac 9{10}C''_1;A)\\
&+CA^{1/2-\gamma/2-2(1-\theta)\gamma}\left(N_0N_{c1}C_0^2\right)(1+t)^{-2}\left(1+A^2t^2\right)^{-1/2}\cdot \mathscr{W}(x,y,z,t;\frac 9{10}C'_1,\frac 9{10}C''_1,\frac 9{10}C''_1;A)\\
\le& A^{-(1-\theta)\gamma}\left(N_0C_0\right)(1+t)^{-2}\left(1+A^2t^2\right)^{-1/2+\gamma/2}\cdot \mathscr{W}(x,y,z,t;C'_1,C''_1,C''_1;A).
\end{aligned}
\end{multline}
The final inequality comes from
\begin{equation}\label{331}
CA^{1-2\theta}N_{c1}C_0\left(1+A^2t^2\right)^{-\gamma/2}\ll A^{-\gamma}\ll A^{-(1-\theta)\gamma},
\end{equation}
\begin{equation}\label{332}
CA^{-2(1-\theta)\gamma} A^{\gamma}N_{c1}C_0\left(1+A^2t^2\right)^{-\gamma/2}
\le CA^{-2(1-\theta)\gamma}N_{c1}C_0
\ll A^{-(1-\theta)\gamma},
\end{equation}
\begin{equation}\label{333}
CA^{1/2-\gamma/2-2(1-\theta)\gamma}N_{c1}C_0\left(1+A^2t^2\right)^{-\gamma/2}
\le CA^{1/2-3\gamma/2-2(1-\theta)\gamma}N_{c1}C_0
\ll A^{-(1-\theta)\gamma},\end{equation}
for $t>1$, $A\gg 1$, $\theta\in(2/3,1)$ and $\gamma\in(1/3,1/2]$.

\vskip .05in

Finally, one could combine \eqref{linn} and \eqref{1}-\eqref{3} to verify the ansatz assumption \eqref{ansatz} and finish the proof of Theorem \ref{main}.

\subsection{Proof of Theorem \ref{main0}: Parabolic-Elliptic Case}

For the parabolic-elliptic case, i.e. $\epsilon\equiv 0$ in \eqref{pro}, the solution $c(x,y,z,t)$ could be represented by \eqref{c0} and thus from Lemma \ref{cp2}, there exists positive constant $N_{c0}>N_0$ such that
\begin{equation}\label{nablac0}
\begin{aligned}
\left|\nabla c(x,y,z,t)\right|
=&\left|\iiint_{\mathbb{R}^3}\nabla\mathbb{G}_{c,0}(x-x_0,y-y_0,z-z_0)n(x_0,y_0,z_0)dx_0dy_0dz_0\right|\\
\le& C(N_0C_0)\mathscr{A}(t;A,\theta,\gamma)\iiint_{\mathbb{R}^3}\left(1+\frac{1}{\sqrt{(x-x_0)^2+(y-y_0)^2+(z-z_0)^2}} \right)\frac{e^{-\sqrt{(x-x_0)^2+(y-y_0)^2+(z-z_0)^2}}}{\sqrt{(x-x_0)^2+(y-y_0)^2+(z-z_0)^2}}\\ &\cdot\mathscr{W}(x_0,y_0,z_0,t;C'_1,C''_1,C''_1;A)dx_0dy_0dz_0\\
\le& N_{c0}C_0\mathscr{A}(t;A,\theta,\gamma) \mathscr{W}(x,y,z,t;\frac 32C'_1, \frac 32C''_1,\frac 32C''_1;A),
\end{aligned}
\end{equation}
\begin{equation}\label{318}
\begin{aligned}
\left|c(x,y,z,t)\right|
=&\left|\iiint_{\mathbb{R}^3}\mathbb{G}_{c,0}(x-x_0,y-y_0,z-z_0)n(x_0,y_0,z_0)dx_0dy_0dz_0\right|\\
\le& C(N_0C_0)\mathscr{A}(t;A,\theta,\gamma)\iiint_{\mathbb{R}^3}\frac{e^{-\sqrt{(x-x_0)^2+(y-y_0)^2+(z-z_0)^2}}}{\sqrt{(x-x_0)^2+(y-y_0)^2+(z-z_0)^2}}\cdot\mathscr{W}(x_0,y_0,z_0,t;C'_1,C''_1,C''_1)dx_0dy_0dz_0\\
\le& CC_0\mathscr{A}(t;A,\theta,\gamma) \mathscr{W}(x,y,z,t;\frac 32C'_1, \frac 32C''_1,\frac 32C''_1;A).
\end{aligned}
\end{equation}
Thus, based on the ansatz assumption \eqref{ansatz} for $n(x,y,z,t)$ and the pointwise structures \eqref{nablac0} for $\nabla c(x,y,z,t)$, the nonlinear term $n\nabla c$ in \eqref{pre} for parabolic-elliptic case could be estimated as follows:
$$
\left|n\nabla c(x,y,t)\right|
\le 8\left(N_0N_{c0}C_0^2\right) \Big( \mathscr{A}(t;A,\theta,\gamma)\Big)^2 \mathscr{W}(x,y,z,t;\frac 35 C'_1,\frac 35 C''_1,\frac 35 C''_1;A).
$$
One could substitute it into \eqref{pre} to justify the ansatz assumption \eqref{ansatz}: for $0< t\le A^{-\theta}$ and $A^{-\theta}<t\le 1$,
the justification is same as \eqref{1} and \eqref{2}. For $t>1$, it is a little different:
\begin{multline}\label{33}
\begin{aligned}
&\left|\int_0^t \iiint_{\mathbb{R}^3}\mathbb{G}(x-x_0,y,z-z_0,t-\sigma;y_0) \nabla \left(n\nabla c\right)(x_0,y_0,z_0)d x_0 dy_0dz_0\right|\\
=&O(1)\left(N_0N_{c0}C_0^2\right)\int_0^{A^{-\theta}} \iiint_{\mathbb{R}^3}(t-\sigma)^{-2}\left(1+A^2 (t-\sigma)^2\right)^{-1/2}\\
&\cdot \mathbb{W}(x-x_0,y,z-z_0,t-\sigma;y_0;C_1,C_1,C_1;A)
\cdot \mathscr{W}(x_0,y_0,z_0,\sigma;\frac 35C'_1,\frac 35C''_1,\frac 35C''_1;A)dx_0dy_0dz_0d\sigma\\
&+O(1)A^{-2(1-\theta)\gamma}\left(N_0N_{c0}C_0^2\right)\int_{A^{-\theta}}^1 \iiint_{\mathbb{R}^3}(t-\sigma)^{-2}\left(1+A^2 (t-\sigma)^2\right)^{-1/2}\sigma^{-1/2-\gamma/2}\left(1+A^2\sigma^2\right)^{-1/2+\gamma/2}\\
&\cdot \mathbb{W}(x-x_0,y,z-z_0,t-\sigma;y_0;C_1,C_1,C_1;A)
\cdot \mathscr{W}(x_0,y_0,z_0,\sigma;\frac 35C'_1,\frac 35C''_1,\frac 35C''_1;A)dx_0dy_0dz_0d\sigma\\
&+O(1)A^{-2(1-\theta)\gamma}\left(N_0N_{c0}C_0^2\right)\int_{1}^t \iiint_{\mathbb{R}^3}(t-\sigma)^{-2}\left(1+A^2 (t-\sigma)^2\right)^{-1/2}
(1+\sigma)^{-3}\left(1+A^2\sigma^2\right)^{-1+\gamma}
\\
&\cdot \mathbb{W}(x-x_0,y,z-z_0,t-\sigma;y_0;C_1,C_1,C_1;A)
\cdot \mathscr{W}(x_0,y_0,z_0,\sigma;\frac 35C'_1,\frac 35C''_1,\frac 35C''_1;A)dx_0dy_0dz_0d\sigma\\
\le& CA^{1-2\theta}\left(N_0N_{c0}C_0^2\right)(1+t)^{-2}\left(1+A^2t^2\right)^{-1/2}\cdot \mathscr{W}(x,y,z,t;\frac 9{10}C'_1,\frac 9{10}C''_1,\frac 9{10}C''_1;A)\\
&+CA^{-2(1-\theta)\gamma} A^{\gamma}
\left(N_0N_{c0}C_0^2\right)(1+t)^{-2}\left(1+A^2t^2\right)^{-1/2}\cdot \mathscr{W}(x,y,z,t;\frac 9{10}C'_1,\frac 9{10}C''_1,\frac 9{10}C''_1;A)\\
&+CA^{-2(1-\theta)\gamma}\left(N_0N_{c0}C_0^2\right)(1+t)^{-2}\left(1+A^2t^2\right)^{-1/2}\cdot \mathscr{W}(x,y,z,t;\frac 9{10}C'_1,\frac 9{10}C''_1,\frac 9{10}C''_1;A)\\
\ll& A^{-(1-\theta)\gamma}\left(N_0C_0\right)(1+t)^{-2}\left(1+A^2t^2\right)^{-1/2+\gamma/2}\cdot \mathscr{W}(x,y,z,t;C'_1,C''_1,C''_1;A).
\end{aligned}
\end{multline}
The final inequality comes from \eqref{331}, \eqref{332} and
\begin{equation}\label{334}
CA^{-2(1-\theta)\gamma}N_cC_0\left(1+A^2t^2\right)^{-\gamma/2}\ll A^{-(1-\theta)\gamma},
\end{equation}
for $t>1$, $A\gg 1$, $\theta\in(2/3,1)$ and $\gamma\in(0,1/2]$. Different from the parabolic-parabolic case, a non-zero lower bound $\frac 13$ is not needed for $\gamma$ here.

Finally, one could combine \eqref{linn}, \eqref{1}, \eqref{2} and \eqref{33} to verify the ansatz assumption \eqref{ansatz} and finish the proof of Theorem \ref{main0}.

\setcounter{equation}{0}

\section{Appendix}

In the Appendix, we list the tedious computations for the interactions between different wave patterns. First, we list all the wave pattern functions in this paper:
\begin{equation}\label{greenwave}
\mathbb{W}(x,y,z,t;y_0;D_1,D_2,D_3;A)\equiv \exp{\left(\frac{-\left(x-\frac{At}{2}(y+y_0)\right)^2}{4D_1t\left(1+\frac{1}{12}A^2 t^2\right)}-\frac{(y-y_0)^2}{4D_2t}-\frac{z^2}{4D_3t}\right)},
\end{equation}

\begin{equation}\label{wavexx}
\mathscr{W}^x(x,y,t;D_1,D_2;A)\equiv \exp{\left(\frac{-\left(x-\frac{At}2 y\right)^2}{D_1t\left(1+A^2 t^2\right)}\right)}+\exp{\left(-\frac{\left|x-\frac{A t}2 y\right|}{D_2(1+At)}\right)},
\end{equation}

\begin{equation}\label{waveyz}
\mathscr{W}^o(y,t;D_1,D_2)\equiv \exp{\left(-\frac{y^2}{D_1t}\right)+\exp{\left(-\frac{|y|}{D_2}\right)}},
\end{equation}

\begin{multline}\label{wavest}
\begin{aligned}
\mathscr{W}(x,y,z,t;C'_1,C''_1,C''_1;A)\equiv&\left(\exp{\left(\frac{-\left(x-\frac{At}2 y\right)^2}{C'_1t\left(1+A^2 t^2\right)}\right)}+\exp{\left(-\frac{\left|x-\frac{A t}2 y\right|}{C'_1(1+At)}\right)}\right)\\
&\cdot\left(\exp{\left(-\frac{y^2}{C''_1t}\right)}+\exp{\left(-\frac{|y|}{C''_1}\right)}\right)
\cdot\left(\exp{\left(-\frac{z^2}{C''_1t}\right)}+\exp{\left(-\frac{|z|}{C''_1}\right)}\right),
\end{aligned}
\end{multline}
and one has that
\begin{lemma}\label{lemmaz} For a positive constant $C_1>1$ and $\sigma\in(0,t)$, there exists positive constants $C''>90 C_1$ such that
\begin{multline}\label{nnwzz}
\begin{aligned}
&\int_{\mathbb{R}}
\mathbb{W}(x-x_0,y,z-z_0,t-\sigma;y_0;C_1,C_1,C_1;A)
\cdot \mathscr{W}^o(z_0,\sigma;C'',C'')dz_0\\
=&O(1)\left(\min\left\{(t-\sigma)^{1/2},\sigma^{1/2}\right\}+\min\left\{(t-\sigma)^{1/2},1\right\}\right)
\cdot \mathbb{W}^{x,y}(x-x_0,y,t-\sigma;y_0;C_1,C_1;A)\cdot \mathscr{W}^o(z,t;\frac 32 C'',\frac 32 C'').
\end{aligned}
\end{multline}
Here, one denotes
\begin{equation}\label{wxyxy}
\mathbb{W}^{x,y}(x,y,t;y_0;D_1,D_2;A)\equiv \exp{\left(\frac{-\left(x-\frac{At}{2}(y+y_0)\right)^2}{4D_1t\left(1+\frac{1}{12}A^2 t^2\right)}-\frac{(y-y_0)^2}{4D_2t}\right)},
\end{equation}
the functions $\mathbb{W}(x,y,z,t;y_0;D_1,D_2,D_3;A)$ and $\mathscr{W}^o(z,t;D_1,D_2)$ are defined by \eqref{greenwave} and \eqref{waveyz} respectively.
\end{lemma}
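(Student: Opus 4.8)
The plan is to reduce \eqref{nnwzz} to a one-dimensional convolution estimate in the variable $z_0$. In the kernel $\mathbb{W}(x-x_0,y,z-z_0,t-\sigma;y_0;C_1,C_1,C_1;A)$ the only factor depending on $z_0$ is $\exp(-(z-z_0)^2/(4C_1(t-\sigma)))$, while the two remaining factors are precisely $\mathbb{W}^{x,y}(x-x_0,y,t-\sigma;y_0;C_1,C_1;A)$ and pull out of the $z_0$-integral. Hence it suffices to prove
\[
\int_{\mathbb{R}}\exp\left(-\frac{(z-z_0)^2}{4C_1(t-\sigma)}\right)\left(\exp\left(-\frac{z_0^2}{C''\sigma}\right)+\exp\left(-\frac{|z_0|}{C''}\right)\right)dz_0 \le C\left(\min\{(t-\sigma)^{1/2},\sigma^{1/2}\}+\min\{(t-\sigma)^{1/2},1\}\right)\mathscr{W}^o\left(z,t;\frac{3C''}{2},\frac{3C''}{2}\right),
\]
handling the two summands of the weight $\mathscr{W}^o(z_0,\sigma;C'',C'')$ separately, each producing one of the two minima.

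For the Gaussian summand I would use the exact identity $\int_{\mathbb{R}}e^{-a(z-z_0)^2}e^{-bz_0^2}dz_0=\sqrt{\pi/(a+b)}\,\exp(-abz^2/(a+b))$ with $a=1/(4C_1(t-\sigma))$ and $b=1/(C''\sigma)$. Then $\sqrt{\pi/(a+b)}\le C\min\{(t-\sigma)^{1/2},\sigma^{1/2}\}$, while the exponent is $-z^2/(4C_1(t-\sigma)+C''\sigma)$; since $C''>90C_1$ gives $4C_1(t-\sigma)+C''\sigma\le C''((t-\sigma)+\sigma)=C''t$, this is bounded by $\exp(-z^2/(C''t))$, which is dominated by the first term of $\mathscr{W}^o(z,t;3C''/2,3C''/2)$. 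This settles the Gaussian contribution, with the prefactor $\min\{(t-\sigma)^{1/2},\sigma^{1/2}\}$, the minimum being built into the convolution identity.

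For the exponential summand $\exp(-|z_0|/C'')$, which carries no intrinsic time scale, I would use the region decomposition $\mathbb{R}=\{|z|\le\mu|z_0|\}\cup\{|z|\ge\mu|z_0|\}$ with a fixed ratio $\mu\in(1,3/2)$. On $\{|z|\le\mu|z_0|\}$ one has $|z_0|\ge|z|/\mu$; splitting $\exp(-|z_0|/C'')=\exp(-(1-\eta)|z_0|/C'')\exp(-\eta|z_0|/C'')$ with $\eta>0$ so small that $\mu/(1-\eta)\le3/2$, the first factor is at most $\exp(-|z|/(3C''/2))$, and one integrates either the second factor (giving $O(1)$) when $t-\sigma\ge1$ or the heat kernel $\exp(-(z-z_0)^2/(4C_1(t-\sigma)))$ (giving $C(t-\sigma)^{1/2}$) when $t-\sigma\le1$; in either case this region contributes at most $C\min\{(t-\sigma)^{1/2},1\}\exp(-|z|/(3C''/2))$, dominated by the second term of $\mathscr{W}^o(z,t;3C''/2,3C''/2)$. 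On $\{|z|\ge\mu|z_0|\}$ one has $|z-z_0|\ge(1-\mu^{-1})|z|$; for $t-\sigma\le1$, writing $\exp(-(z-z_0)^2/(4C_1(t-\sigma)))\le\exp(-(z-z_0)^2/(8C_1(t-\sigma)))\cdot\exp(-(1-\mu^{-1})^2z^2/(8C_1(t-\sigma)))$ and integrating the first factor gives $C(t-\sigma)^{1/2}$ together with the residual decay $\exp(-(1-\mu^{-1})^2z^2/(8C_1t))$, and the requirement $C''>90C_1$ is precisely what guarantees (for $\mu$ near $3/2$) that $8C_1/(1-\mu^{-1})^2\le3C''/2$, so this is bounded by $\exp(-z^2/(3C''t/2))$, dominated by the first term of $\mathscr{W}^o$; for $t-\sigma\ge1$ one instead keeps the full Gaussian and integrates $\exp(-|z_0|/C'')$ to obtain an $O(1)$ prefactor. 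Adding the two summands and reattaching $\mathbb{W}^{x,y}(x-x_0,y,t-\sigma;y_0;C_1,C_1;A)$ yields \eqref{nnwzz}.

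The main obstacle is the constant bookkeeping in the exponential case: after decomposing $\mathbb{R}$ and peeling the weight $\exp(-|z_0|/C'')$ (and, for $t-\sigma\le1$, also half of the heat kernel so as to recover the factor $(t-\sigma)^{1/2}$), one must verify that every exponential rate that appears in $z$ stays below $1/(3C''/2)$ and that every Gaussian rate, after the lossy replacement of $t-\sigma$ by $t$ in the width, stays below $1/(3C''t/2)$; the sharpest of these elementary inequalities is exactly what forces $C''>90C_1$. A secondary point requiring care is producing the genuine \emph{minima} in the time prefactors rather than one-sided bounds, which is why in each region one chooses, according to the size of $t-\sigma$, whether to integrate the heat kernel or the exponential weight.
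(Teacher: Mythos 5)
Your proposal is correct and follows essentially the same route as the paper: pull the $z_0$-independent factor $\mathbb{W}^{x,y}$ out of the integral, peel off the target decay $\mathscr{W}^o(z,t;\frac32 C'',\frac32 C'')$ at slightly enlarged constants (which is where $C''>90C_1$ enters), and then integrate either the heat kernel in $z_0$ or the weight $\mathscr{W}^o(z_0,\sigma;\cdot,\cdot)$ to produce the factors $(t-\sigma)^{1/2}$ versus $\sigma^{1/2}$ or $1$. The only differences are cosmetic: you evaluate the Gaussian--Gaussian piece exactly via the convolution identity and obtain the minima by branching on $t-\sigma\lessgtr 1$, whereas the paper uses a single pointwise splitting inequality and takes the minimum of two global bounds; both yield the stated estimate.
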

\begin{proof}
There exists a constant $C>1$ such that
\begin{multline}\nonumber
\begin{aligned}
&\exp{\left(-\frac{(z-z_0)^2}{4C_1(t-\sigma)}\right)}\cdot \mathscr{W}^o(z_0,\sigma;C'',C'')\\
\le& C\left(\exp{\left(-\frac{z^2}{128C_1t}\right)}+\mathscr{W}^o(z,t;\frac 32C'',\frac 32C'')\right)\cdot\exp{\left(-\frac{(z-z_0)^2}{CC_1(t-\sigma)}\right)}\cdot \mathscr{W}^o(z_0,\sigma;CC'',CC'').
\end{aligned}
\end{multline}
Thus
\begin{multline}\nonumber
\begin{aligned}
&\int_{\mathbb R}\exp{\left(-\frac{(z-z_0)^2}{4C_1(t-\sigma)}\right)}\cdot \mathscr{W}^o(z_0,\sigma;C'',C'') dz_0\\
\le&
C\left(\exp{\left(-\frac{z^2}{128C_1t}\right)}+\mathscr{W}^o(z,t;\frac 32C'',\frac 32C'')\right)\cdot\int_{\mathbb R} \exp{\left(-\frac{(z-z_0)^2}{CC_1(t-\sigma)}\right)}dz_0\\
\le&
C(t-\sigma)^{1/2}\mathscr{W}^o(z,t;\frac 32C'',\frac 32C''),
\end{aligned}
\end{multline}
and also similarly
\begin{multline}\nonumber
\begin{aligned}
&\int_{\mathbb R}\exp{\left(-\frac{(z-z_0)^2}{4C_1(t-\sigma)}\right)}\cdot \mathscr{W}^o(z_0,\sigma;C'',C'') dz_0\\
\le&
C\left(\exp{\left(-\frac{z^2}{128C_1t}\right)}+\mathscr{W}^o(z,t;\frac 32C'',\frac 32C'')\right)\cdot\int_{\mathbb R}\mathscr{W}^o(z_0,\sigma;CC'',CC'')dz_0\\
\le&
C(\sigma^{1/2}+1)\mathscr{W}^o(z,t;\frac 32C'',\frac 32C''),
\end{aligned}
\end{multline}
and we finish the proof.
\end{proof}

\begin{lemma}\label{lemma0} For a positive constant $C_1>1$ and $\sigma\in(0,t)$, there exist positive constants $C',C''$ and $1<B\le\frac{3}2$ such that
\begin{multline}\label{nnw}
\begin{aligned}
&\iint_{\mathbb{R}^3}
\mathbb{W}^{xy}(x-x_0,y,t-\sigma;y_0;C_1,C_1;A)
\cdot \exp\left({\frac{-\left(x_0-\frac{A\sigma}2 y_0\right)^2}{C'\sigma\left(1+A^2 \sigma^2\right)}}-\frac{y_0^2}{C''\sigma}\right)dx_0dy_0\\
=&O(1)\min\left\{(t-\sigma)\left(1+A^2(t-\sigma)^2\right)^{1/2},\sigma\left(1+A^2\sigma^2\right)^{1/2}\right\}
\cdot\exp\left({\frac{-\left(x-\frac{At}2 y\right)^2}{BC' t\left(1+A^2 t^2\right)}}-\frac{y^2}{BC'' t}\right),
\end{aligned}
\end{multline}

\begin{multline}\label{nnw1}
\begin{aligned}
&\iint_{\mathbb{R}^2}
\mathbb{W}^{xy}(x-x_0,y,t-\sigma;y_0;C_1,C_1;A)
\cdot \exp{\left(-\frac{\left|x_0-\frac{A \sigma}2 y_0\right|}{C'(1+A\sigma)}-\frac{\left|y_0\right|}{C''}\right)}dx_0dy_0\\
=&O(1)\min\left\{(t-\sigma)\left(1+A^2(t-\sigma)^2\right)^{1/2},(t-\sigma)^{1/2}\left(1+A^2(t-\sigma)^2\right)^{1/2},(t-\sigma)^{1/2}\left(1+A\sigma\right), 1+A\sigma\right\}\\
&\cdot \mathscr{W}^x(x,y,t;BC',BC';A)\mathscr{W}^o(y,t;BC'',BC''),
\end{aligned}
\end{multline}

\begin{multline}\label{nnw2}
\begin{aligned}
&\iint_{\mathbb{R}^2}
\mathbb{W}^{xy}(x-x_0,y,t-\sigma;y_0;C_1,C_1;A)
\cdot \exp{\left(\frac{-\left(x_0-\frac{A\sigma}2 y_0\right)^2}{C'\sigma\left(1+A^2 \sigma^2\right)}-\frac{\left|y_0\right|}{C''}\right)}dx_0dy_0\\
=&O(1)\min\left\{(t-\sigma)\left(1+A^2(t-\sigma)^2\right)^{1/2},(t-\sigma)^{1/2}\left(1+A^2(t-\sigma)^2\right)^{1/2},(t-\sigma)^{1/2}\sigma^{1/2}\left(1+A^2\sigma^2\right)^{1/2}, \right.\\
&\left.\sigma^{1/2}\left(1+A^2\sigma^2\right)^{1/2}\right\}
\cdot \mathscr{W}^x(x,y,t;BC',BC';A)\mathscr{W}^o(y,t;BC'',BC''),
\end{aligned}
\end{multline}

\begin{multline}\label{nnw3}
\begin{aligned}
&\iint_{\mathbb{R}^2}
\mathbb{W}^{xy}(x-x_0,y,t-\sigma;y_0;C_1,C_1;A)
\cdot \exp{\left(-\frac{\left|x_0-\frac{A \sigma}2 y_0\right|}{C'(1+A\sigma)}-\frac{y_0^2}{C''\sigma}\right)}dx_0dy_0\\
=&O(1)\min\left\{(t-\sigma)\left(1+A^2(t-\sigma)^2\right)^{1/2},\sigma^{1/2}(t-\sigma)^{1/2}\left(1+A^2(t-\sigma)^2\right)^{1/2},(t-\sigma)^{1/2}\left(1+A\sigma\right), \sigma^{1/2}\left(1+A\sigma\right)\right\}\\
&\cdot \mathscr{W}^x(x,y,t;BC',BC';A)\cdot \exp\left(-\frac{y^2}{BC''t}\right).
\end{aligned}
\end{multline}
\end{lemma}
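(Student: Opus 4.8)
All four identities have the same shape: the two‑dimensional Couette profile $\mathbb{W}^{xy}$ at time $t-\sigma$ — a product of a Gaussian in $x-x_0-\frac{A(t-\sigma)}{2}(y+y_0)$ of width $\asymp(t-\sigma)^{1/2}(1+A^2(t-\sigma)^2)^{1/2}$ and a Gaussian in $y-y_0$ of width $\asymp(t-\sigma)^{1/2}$ — is convolved against a product of a profile in the source shear variable $x_0-\frac{A\sigma}{2}y_0$ and a profile in $y_0$. Expanding the argument of the first Gaussian one gets the identity
\[
x-x_0-\tfrac{A(t-\sigma)}{2}(y+y_0)=\Bigl(x-\tfrac{At}{2}y\Bigr)-\Bigl(x_0-\tfrac{A\sigma}{2}y_0\Bigr)+\tfrac{A}{2}(\sigma y-ty_0),
\]
so that, writing $X:=x-\frac{At}{2}y$ and $X_0:=x_0-\frac{A\sigma}{2}y_0$ (note $dx_0=dX_0$), the $x_0$‑integration is a one‑dimensional convolution in $X_0$ between the kernel Gaussian centred at $m:=X+\frac{A}{2}(\sigma y-ty_0)$ and the source profile centred at $0$. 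The plan is to (i) first record the largeness conditions \eqref{constant1}--\eqref{constant2} on $C',C''$ relative to $C_1$ that make the choice $B=3/2$ admissible (throughout, $O(1)$ is a constant depending only on these fixed parameters), then (ii) integrate out $X_0$, and (iii) integrate out $y_0$.

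For the purely Gaussian estimate \eqref{nnw}, step (ii) is a Gaussian–Gaussian convolution, yielding a Gaussian in $m$ of width$^2$ the sum $p+q$ of the two width$^2$'s, with amplitude $\bigl(pq/(p+q)\bigr)^{1/2}\le\min\{p^{1/2},q^{1/2}\}$. In step (iii) the substitution $z=\frac{At}{2}y_0$ turns the $y_0$‑integral into $\int e^{-\widetilde Q(z)}\,dz$, where
\[
\widetilde Q(z)=\frac{\bigl(X+\tfrac{A\sigma}{2}y-z\bigr)^2}{p+q}+\frac{\bigl(\tfrac{At}{2}y-z\bigr)^2}{A^2t^2P/4}+\frac{z^2}{A^2t^2Q/4},\qquad P=4C_1(t-\sigma),\ Q=C''\sigma;
\]
this equals $\sqrt{\pi/\widetilde{\mathcal A}}\,e^{-\min_z\widetilde Q}$, and restoring the Jacobian one finds the amplitude is $\le\min\{P^{1/2},Q^{1/2}\}$, so the total amplitude is $\le\min\{p^{1/2}P^{1/2},q^{1/2}Q^{1/2}\}=O(1)\min\{(t-\sigma)(1+A^2(t-\sigma)^2)^{1/2},\sigma(1+A^2\sigma^2)^{1/2}\}$ — exactly the claimed time factor. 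For the exponent I would write $\min_z\widetilde Q\ge\min_z\widetilde Q_1+\min_z\widetilde Q_2$ with $\widetilde Q_2$ the sum of $\frac23$ of the last two squares and $\widetilde Q_1$ the first square plus $\frac13$ of the last two; then $\min_z\widetilde Q_2\ge\frac{2y^2}{3(P+Q)}\ge\frac{y^2}{\frac32C''t}$ because $P+Q=4C_1t+(C''-4C_1)\sigma\le C''t$, while for $\widetilde Q_1$ the elementary inequality $\sum_i\frac{(z-a_i)^2}{p_i}\ge\frac{(\sum_ic_ia_i)^2}{\sum_ic_i^2p_i}$, valid whenever $\sum_ic_i=0$, applied with $(c_1,c_2,c_3)=(1,-\frac{\sigma}{t},\frac{\sigma}{t}-1)$ (so that $\sum_ic_ia_i=X$), yields $\min_z\widetilde Q_1\ge\frac{X^2}{\frac32C't(1+A^2t^2)}$ once $C'$ is large enough in terms of $C_1,C''$ — here one uses $(t-\sigma)^3+\sigma^3+\sigma^2(t-\sigma)+(t-\sigma)^2\sigma\le4t^3$ to see $\sum_ic_i^2p_i\le O(1)\,t(1+A^2t^2)$. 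Assembling amplitude and exponent gives \eqref{nnw}.

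For \eqref{nnw1}--\eqref{nnw3} I would follow the same three steps, but wherever a factor is an exponential rather than a Gaussian, replace ``complete the square'' by a domain decomposition: split the $X_0$‑line into $\{|X_0|\ge|m|/2\}$ and its complement exactly as in \eqref{e1}, and, in the $y_0$‑integration, compare $|y|$ with $|y_0|$ as in Lemma \ref{lemma1} and (when the source $x$‑profile is an exponential) compare the size of the target shear variable with $At|y-y_0|$ as in \eqref{expo1} of Lemma \ref{cp2}; this converts Gaussian decay into exponential decay on the relevant pieces and lets one peel off $\mathscr{W}^x(x,y,t;BC',BC';A)$ and $\mathscr{W}^o(y,t;BC'',BC'')$ (or the pure Gaussian $\exp\bigl(-y^2/(BC''t)\bigr)$ in \eqref{nnw3}). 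On each piece the surviving one‑dimensional integral is of either a Gaussian — integrated heat‑kernel style it contributes one of $(t-\sigma)^{1/2}(1+A^2(t-\sigma)^2)^{1/2}$, $(t-\sigma)^{1/2}$, $\sigma^{1/2}(1+A^2\sigma^2)^{1/2}$, $\sigma^{1/2}$ — or an exponential, which contributes $O(1)$ or $1+A\sigma$. The terms inside each stated minimum are precisely the products of one $x$‑direction choice with one $y$‑direction choice, and since each choice is a legitimate bound one takes the smallest.

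The hard part is step (iii): recovering the $X$‑ and $y$‑decay of the target at the same time. The Gaussian that carries the $X$‑dependence is centred, as a function of $y_0$, at $\frac{2}{At}\bigl(X+\frac{A\sigma}{2}y\bigr)$, generally far from both $0$ and $y$ (the centres of the two remaining $y_0$‑profiles), so a naive pointwise product loses control; one must split $\widetilde Q$ carefully and apportion the available decay between the two directions in the right ratio ($\frac13$ to $X$, $\frac23$ to $y$) so that the surviving exponents have widths exactly $\frac32C't(1+A^2t^2)$ and $\frac32C''t$, which is what forces $C'$ to be large relative to $C_1,C''$ in \eqref{constant1}--\eqref{constant2}. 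A secondary, purely bookkeeping difficulty is keeping the minimum‑branches and the sub‑case split at $\sigma=t/2$ consistent across the four estimates and their region decompositions.
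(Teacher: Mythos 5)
Your proposal is correct, and for the purely Gaussian estimate \eqref{nnw} it takes a genuinely different route from the paper. The paper proves \eqref{nnw} by the same region-decomposition scheme it uses throughout: splitting $(x_0,y_0)$-space according to whether $\bigl|x-\tfrac{At}2y-x_0+\tfrac{A\sigma}2y_0\bigr|$ dominates $\tfrac{\theta_1A\sigma}2|y-y_0|+\tfrac{\theta_1A(t-\sigma)}2|y_0|$ and whether $\bigl|x-\tfrac{At}2y\bigr|$ dominates $\theta_2\bigl|x_0-\tfrac{A\sigma}2y_0\bigr|$ (inequalities \eqref{220}--\eqref{222}), dominating pointwise by the target profile on each piece and then integrating the leftover factors, with the admissible constants recorded in \eqref{constant1}. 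You instead exploit that all four factors in \eqref{nnw} are Gaussians: an exact convolution in the shear variable $X_0=x_0-\tfrac{A\sigma}2y_0$ followed by an exact Gaussian $y_0$-integral, with the exponent controlled by the Cauchy--Schwarz bound $\sum_i(z-a_i)^2/p_i\ge\bigl(\sum_ic_ia_i\bigr)^2/\sum_ic_i^2p_i$ for $\sum_ic_i=0$ and the weights $(1,-\sigma/t,\sigma/t-1)$, and the amplitude controlled by $\sqrt{pq/(p+q)}\le\min\{\sqrt p,\sqrt q\}$; I checked the bookkeeping ($P+Q\le C''t$ needs $C''\ge4C_1$, and $\sum_ic_i^2p_i\le\tfrac32C't(1+A^2t^2)$ needs $C'$ large relative to $C_1,C''$) and it reproduces exactly the stated time factors and the exponent with $B=3/2$, under largeness conditions of the same type as \eqref{constant1}. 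What your route buys is a cleaner, computation-driven proof of \eqref{nnw} with transparent constant constraints; what the paper's route buys is uniformity, since the identical three-region splitting handles Gaussian and exponential tails alike, which is exactly how it disposes of \eqref{nnw1} and then declares \eqref{nnw2}--\eqref{nnw3} ``similar''. For the mixed estimates \eqref{nnw1}--\eqref{nnw3} your plan in fact coincides with the paper's (domain decompositions in the style of \eqref{e1}, Lemma \ref{lemma1} and \eqref{expo1}, halving exponents so that on each piece either factor may be integrated, the four entries of each minimum arising as products of one $x$-choice and one $y$-choice); it is left at sketch level, comparable to the paper's own omissions. One small precision to add when writing it up: since the shift in the kernel's shear variable is $\tfrac{A\sigma}2(y-y_0)-\tfrac{A(t-\sigma)}2y_0$, the comparison region must involve the combination $\tfrac{A\sigma}2|y-y_0|+\tfrac{A(t-\sigma)}2|y_0|$ (as in the paper's $\theta_1$-regions), not just $At|y-y_0|$.
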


\begin{proof}
For and fixed $(x,y,t)$ and any fixed $y_0\in \mathbb{R}$ and $0\le\sigma\le t$, consider the region
$$
\left\{x_0\in\mathbb{R}: \left|x-\frac{At}2 y-x_0+\frac{A\sigma}2 y_0\right|\ge\frac{\theta_1A\sigma}2\left|y-y_0\right|+\frac{\theta_1A(t-\sigma)}2\left|y_0\right|\right\}\cap\left\{x_0\in\mathbb{R}: \left|x-\frac{At}2y\right|\ge \theta_2\left|x_0-\frac{At}2y_0\right|\right\}
$$
for $\theta_1, \theta_2>1$ and one has that
\begin{multline}\label{220}
\begin{aligned}
\left|x-x_0-\frac{A(t-\sigma)}{2}(y+y_0)\right|=&\left|x-\frac{At}2 y-x_0+\frac{A\sigma}2 y_0+\frac{A\sigma}2\left(y-y_0\right)-\frac{A(t-\sigma)}2y_0\right|\\
\ge& \frac{\theta_1-1}{\theta_1} \left|x-\frac{At}2 y-x_0+\frac{A\sigma}2 y_0\right|
\ge \frac{\theta_1-1}{\theta_1}\cdot\frac{\theta_2-1}{\theta_2} \left|x-\frac{At}2 y\right|.
\end{aligned}
\end{multline}
In the region
$$
\left\{x_0\in\mathbb{R}: \left|x-\frac{At}2 y-x_0+\frac{A\sigma}2 y_0\right|\le\frac{\theta_1A\sigma}2\left|y-y_0\right|+\frac{\theta_1A(t-\sigma)}2\left|y_0\right|\right\}\cap\left\{x_0\in\mathbb{R}: \left|x-\frac{At}2y\right|\ge \theta_2\left|x_0-\frac{At}2y_0\right|\right\},
$$
it holds that
\begin{multline}\label{221}
\begin{aligned}
\frac{\left(y-y_0\right)^2+y_0^2}{t}\ge& \frac{\left(At\left|y-y_0\right|+At\left|y_0\right|\right)^2}{2A^2t^3}\ge
\frac{\left(A\sigma\left|y-y_0\right|+A(t-\sigma)\left|y_0\right|\right)^2}{2A^2t^3}\\
\ge& \frac{2}{\theta_1^2}\frac{\left|x-\frac{At}2 y-x_0+\frac{A\sigma}2 y_0\right|^2}{A^2t^3}
\ge \frac{2}{\theta_1^2}\left(\frac{\theta_2-1}{\theta_2}\right)^2 \frac{\left|x-\frac{At}2 y\right|^2}{t\left(1+A^2t^2\right)}.
\end{aligned}
\end{multline}
And if $x_0$ satisfies $\left|x-\frac{At}2y\right|\le \theta_2\left|x_0-\frac{At}2y_0\right|$, it holds that
\begin{equation}\label{222}
\left|x_0-\frac{A\sigma}2y_0\right|^2\ge \frac{1}{\theta_2^2}\left|x-\frac{At}2y\right|^2.
\end{equation}
Combining \eqref{220}-\eqref{222}, one has that
\begin{equation}
\begin{aligned}
&\iint_{\mathbb{R}^2}
\mathbb{W}^{xy}(x-x_0,y,t-\sigma;y_0;C_1,C_1;A)\cdot \exp\left({\frac{-\left(x_0-\frac{A\sigma}2 y_0\right)^2}{C'\sigma\left(1+A^2 \sigma^2\right)}}-\frac{y_0^2}{C''\sigma}\right)dx_0dy_0\\
\le& C\iint_{\mathbb{R}^2}
\left(\exp\left({\frac{-\left(\frac{\theta_1-1}{\theta_1}\cdot\frac{\theta_2-1}{\theta_2}\right)^2\left(x-\frac{At}2 y\right)^2}{4C_1 t\left(1+A^2 t^2\right)}}\right)+\exp\left({\frac{-\frac{2}{\theta_1^2}\left(\frac{\theta_2-1}{\theta_2}\right)^2\left(x-\frac{At}2 y\right)^2}{18C'' t\left(1+A^2 t^2\right)}}\right)+\exp\left({\frac{-\frac{1}{\theta_2^2}\left(x-\frac{At}2 y\right)^2}{\frac 98C' t\left(1+A^2 t^2\right)}}\right)\right) \\
&\cdot\exp{\left(-\frac{(y-y_0)^2}{8C_1(t-\sigma)}\right)}\cdot \exp\left({\frac{-\left(x_0-\frac{A\sigma}2 y_0\right)^2}{9C'\sigma\left(1+A^2 \sigma^2\right)}}-\frac{y_0^2}{\frac {18}{17}C''\sigma}\right)dx_0dy_0\\
\le& C\exp\left({\frac{-\left(x-\frac{At}2 y\right)^2}{BC' t\left(1+A^2 t^2\right)}}\right)\iint_{\mathbb{R}^2}\exp{\left(-\frac{(y-y_0)^2}{8C_1(t-\sigma)}\right)}\cdot \exp\left({\frac{-\left(x_0-\frac{A\sigma}2 y_0\right)^2}{9C'\sigma\left(1+A^2 \sigma^2\right)}}-\frac{y_0^2}{\frac {18}{17}C''\sigma}\right)dx_0dy_0\\
\le& C\sigma^{1/2}\left(1+A^2\sigma^2\right)^{1/2} \exp\left({\frac{-\left(x-\frac{At}2 y\right)^2}{BC' t\left(1+A^2 t^2\right)}}\right)\\
&\cdot \left(\int_{\left|y\right|\ge\theta_3\left|y_0\right|}+\int_{\left|y\right|\le\theta_3\left|y_0\right|}\right)\exp{\left(-\frac{(y-y_0)^2}{8C_1(t-\sigma)}-\frac{y_0^2}{\frac 98 C''\sigma}\right)}\cdot \exp{\left(-\frac{y_0^2}{{18}C''\sigma}\right)}dy_0\\
\le& C\sigma\left(1+A^2\sigma^2\right)^{1/2} \exp\left({\frac{-\left(x-\frac{At}2 y\right)^2}{BC' t\left(1+A^2 t^2\right)}}-\frac{y^2}{BC''t}\right)
\end{aligned}
\end{equation}
in which we choose $1< \theta_2^2, \theta_3^2\le\frac 43$ and the other constants $\theta_1, C', C''$ accordingly:
\begin{equation}\label{constant1}
4C_1\left(\frac{\theta_1}{\theta_1-1}\right)^2\left(\frac{\theta_2}{\theta_2-1}\right)^2\le\frac 32 C',\ \ \ \ \ \ \ \ 9C''{\theta_1^2}\left(\frac{\theta_2}{\theta_2-1}\right)^2\le\frac 32 C',\ \ \ \ \ \ \ \ 8C_1\left(\frac{\theta_3}{\theta_3-1}\right)^2\le \frac 32 C''
\end{equation}
and thus $1<B\le \frac32$. Here, in the computation of the integrals, we integrate $\exp\left({\frac{-\left(x_0-\frac{A\sigma}2 y_0\right)^2}{9C'\sigma\left(1+A^2 \sigma^2\right)}}\right)$ with respect to $x_0$ and $\exp{\left(-\frac{|y_0|^2}{\frac 1{18}C''\sigma}\right)}$ with respect to $y_0$ and obtain the factor $\sigma\left(1+A^2\sigma^2\right)^{1/2}$ after integration. One could similarly integrate
$$
\exp{\left(\frac{-\left(x-x_0-\frac{A(t-\sigma)}{2}(y+y_0)\right)^2}{4C_1(t-\sigma)\left(1+\frac{1}{12}A^2 (t-\sigma)^2\right)}-\frac{(y-y_0)^2}{4C_1(t-\sigma)}\right)}
$$
with respect to $x_0$ and $y_0$ respectively to obtain a factor $(t-\sigma)\left(1+A^2(t-\sigma)^2\right)^{1/2}$ from integration and finish the proof of \eqref{nnw}.

\vskip .05in

Next, one verifies \eqref{nnw1}: similar to \eqref{220}-\eqref{222}, one has that
\begin{equation}\nonumber
\begin{aligned}
&\mathbb{W}^{xy}(x-x_0,y,t-\sigma;y_0;C_1,2C_1)
\cdot \exp{\left(-\frac{\left|x_0-\frac{A \sigma}2 y_0\right|}{\frac 54C'(1+A\sigma)}-\frac{\left|y_0\right|}{10C''}\right)}\\
\le &
\exp\left({\frac{-\left(\frac{\theta_1-1}{\theta_1}\cdot\frac{\theta_2-1}{\theta_2}\right)^2\left(x-\frac{At}2 y\right)^2}{4C_1 t\left(1+A^2 t^2\right)}}\right)
+\exp\left({\frac{-\frac{1}{\theta_1^2}\left(\frac{\theta_2-1}{\theta_2}\right)^2\left(x-\frac{At}2 y\right)^2}{8C_1 t\left(1+A^2 t^2\right)}}\right)+\exp\left(-\frac{\frac{\theta_2-1}{\theta_1\theta_2}\left|x-\frac{A \sigma}2 y\right|}{10C''At}\right)+\exp\left(-\frac{\left|x-\frac{A \sigma}2 y\right|}{\frac 54\theta_2C'\left(1+At\right)}\right)
\end{aligned}
\end{equation}
and thus
\begin{multline}\nonumber
\begin{aligned}
&\iint_{\mathbb{R}^2}
\mathbb{W}^{xy}(x-x_0,y,t-\sigma;y_0;C_1,C_1)
\cdot \exp{\left(-\frac{\left|x_0-\frac{A \sigma}2 y_0\right|}{C'(1+A\sigma)}-\frac{\left|y_0\right|}{C''}\right)}dx_0dy_0\\
\le& C\left(\exp\left({\frac{-\left(x-\frac{At}2 y\right)^2}{BC' t\left(1+A^2 t^2\right)}}\right)+\exp\left(-\frac{\left|x-\frac{A \sigma}2 y\right|}{BC'\left(1+At\right)}\right)\right)
\cdot\iint_{\mathbb{R}^2} \exp{\left(-\frac{(y-y_0)^2}{8C_1(t-\sigma)}\right)}\cdot \exp{\left(-\frac{\left|x_0-\frac{A \sigma}2 y_0\right|}{5C'(1+A\sigma)}-\frac{\left|y_0\right|}{\frac{10}{9}C''}\right)}dx_0dy_0\\
\le& C\left(1+A\sigma\right)\left(\exp\left({\frac{-\left(x-\frac{At}2 y\right)^2}{BC' t\left(1+A^2 t^2\right)}}\right)+\exp\left(-\frac{\left|x-\frac{A \sigma}2 y\right|}{BC'\left(1+At\right)}\right)\right)\\
&\cdot \left(\int_{\left|y\right|\ge\theta_3\left|y_0\right|}+\int_{\left|y\right|\le\theta_3\left|y_0\right|}\right)\exp{\left(-\frac{(y-y_0)^2}{8C_1(t-\sigma)}-\frac{\left|y_0\right|}{\frac{5}{4}C''}\right)}\cdot \exp{\left(-\frac{\left|y_0\right|}{{10}C''}\right)}dy_0\\[2mm]
\le& C\mathscr{W}^x(x,y,t;BC',BC';A)\mathscr{W}^o(y,t;BC'',BC'').
\end{aligned}
\end{multline}
Here, one chooses $1<\theta_2, \theta_3\le \frac 65$ and the other constants $\theta_1, C', C''$ accordingly:
\begin{equation}\label{constant2}
4C_1\left(\frac{\theta_1}{\theta_1-1}\right)^2\left(\frac{\theta_2}{\theta_2-1}\right)^2\le\frac 32 C',\ \ \ \ \ \ \ \ 8C_1{\theta_1^2}\left(\frac{\theta_2}{\theta_2-1}\right)^2\le\frac 32 C',\ \ \ \ \ \ \ \ 10C''\frac{\theta_1\theta_2}{\theta_2-1}\le\frac 32 C',\ \ \ \ \ \ \ \ 8C_1\left(\frac{\theta_3}{\theta_3-1}\right)^2\le \frac 32 C''
\end{equation}
to ensure that $1<B\le \frac 32$. Here, we use $\exp\left(-\frac{\left|x_0-\frac{A \sigma}2 y_0\right|}{5C'(1+A\sigma)}-\frac{\left|y_0\right|}{\frac{10}9C''}\right)$ to integrate with respect to $x_0$ and $y_0$ to obtain the factor $1+A\sigma$ and one could also use the heat kernels to integrate to obtain other factors in \eqref{nnw1}.

The proofs of \eqref{nnw2} and \eqref{nnw3} are similar and we omit the details.
\end{proof}

\section*{Acknowledgement}
S. Deng is supported by National Nature Science Foundation of
China 11831011 and 12161141004 and Shanghai Science and Technology Innovation Action Plan No. 21JC1403600.
B. Shi is supported by the Jiangsu Funding Program for Excellent Postdoctoral Talent 2023ZB116. W. Wang is supported by National Nature Science Foundation of
China 12271357, 11831011 and 12161141004 and Shanghai Science and Technology Innovation Action Plan No. 21JC1403600.

\end{document}